\documentclass[11pt]{amsart}
\usepackage{a4wide,enumerate,enumitem,amsmath,color}
\usepackage[utf8]{inputenc}
\usepackage[scale = .8]{geometry}
\usepackage{amssymb}
\usepackage{amsthm}
\usepackage{graphicx}
\usepackage{subcaption}
\usepackage[noadjust]{cite}
\usepackage{bigints}
\usepackage{breqn}
\usepackage{tikz}
\usepackage{mathtools}
\usepackage{hyperref}
\usepackage{cite}
\usepackage{dsfont}
\usepackage {float}

\newtheorem{theorem}{Theorem}

\newtheorem{lemma}{Lemma}
\newtheorem{proposition}{Proposition}
\newtheorem{remark}{Remark}
\newtheorem{definition}{Definition}

\newcommand{\K}{\mathcal{K}}
\newcommand{\dd}{\mathrm{d}}
\newcommand{\pa}{\partial}

\newcommand{\R}{\mathbb{R}}
\newcommand{\Z}{\mathbb{Z}}

\newcommand{\I}{\mathcal{I}_N}

\newcommand{\eps}{\varepsilon}

\begin{document}

\title[Convergence of a scheme for a nonlocal eikonal equation]{Convergence of a scheme for a one dimensional nonlocal and nonlinear eikonal equation}

\author[D. Al Zareef]{Diana Al Zareef}
\address{Universit\'e de Technologie de Compi\`egne, LMAC, 60200 Compi\`egne, France}
\email{diana.al-zareef@utc.fr}

\author[A. El Hajj]{Ahmad El Hajj}
\address{Universit\'e de Technologie de Compi\`egne, LMAC, 60200 Compi\`egne, France}
\email{elhajjah@utc.fr}

\author[A. Zurek]{Antoine Zurek}
\address{Universit\'e de Technologie de Compi\`egne, LMAC, 60200 Compi\`egne, France}
\email{antoine.zurek@utc.fr}

\date{\today}

\thanks{}

\begin{abstract}In this work, we study a one-dimensional nonlocal and nonlinear eikonal equation without sign restriction on its spatial gradient. The equation is characterized by weak regularity assumptions on both the nonlocal velocity field and the initial data. We derive a periodic version of this model and propose a semi-explicit (IMEX) scheme for its numerical approximation. We prove that the scheme preserves a discrete gradient entropy estimate and establish its convergence in the viscosity sense. Finally, we present numerical results illustrating the behavior of the model and the performance of the proposed scheme.

\bigskip
		
\noindent\textbf{Mathematics Subject Classification (2020):}  74S20, 65M12, 35F20, 35Q74, 49L25

\medskip
		
\noindent\textbf{Keywords:} Nonlocal eikonal equations, discrete gradient entropy estimate, dislocation dynamics, viscosity solutions.
\end{abstract}

%\paragraph{Keywords:}
%\keywords{}

%\paragraph{AMS classification:}
%\subjclass[2000]{}

\maketitle
\tableofcontents
%\tableofcontents
%%%%%%%%%%%%%%%%%%%%%%%%%%%%%%%%%%%%%%%%%%%%%%%%%%%%%%%%%%%%%%%%%%%%%%%%%%%%%%%
\section{Introduction}
\subsection{Physical motivation and overview of the main results}Understanding the physical dynamics of materials by examining the movement of atoms within them has long been of great interest to scientists. Indeed, in 1934,  Taylor, Orowan, and Polanyi were the first to introduce the concept of dislocations in crystal structures.

A dislocation is defined as a linear defect in a crystalline material, particularly in metallic alloys. Their pioneering work demonstrated that these defects play a key role in explaining plastic deformation at the microscopic scale (see~\cite{19, hull2011introduction} for a physical description of dislocations). A dislocation is characterized by its Burgers vector, which quantifies both the magnitude and the direction of the lattice distortion. There exist several types of dislocations, one of which is the edge dislocation. In this case, an extra half-plane of atoms is inserted into the crystal, and the Burgers vector is perpendicular to the dislocation line. It indicates the direction of motion of the dislocation under an applied stress.

In this work, we are interested in a particular geometry in which the dislocation lines are parallel and move within the same two-dimensional $(xy)$ plane embedded in a three-dimensional elastic crystal. By taking a cross-sectional view perpendicular to the dislocation lines, the model reduces from a three-dimensional setting to a two-dimensional one. Furthermore, we assume that the distribution of dislocations depends only on one spatial variable. Under this assumption, the model further reduces to a one-dimensional problem. This leads to the one-dimensional model for dislocation dynamics considered in the present work. Accordingly, the model, with the unknown real-valued function $u$ describing the plastic deformation induced by dislocations, is given by
\begin{align}\label{3eq:eikonal}
    \begin{cases}
  \partial_t u(x,t) = \left[\left( \K(\cdot) \star u(\cdot,t)\right)(x) + a(t)\right]
  \left|\partial_x u(x,t)\right|& \mbox{in } \R \times (0, T), \\
  u(x,0) = u_0(x) & \mbox{in } \R,
\end{cases}
\end{align}
where $T>0$, $\partial_t u$ and $\partial_x u$ denote respectively the time and space derivatives of $u$, and $\K$ is a kernel obtained from the solution of the equations of linear elasticity. Here, the dislocations move with the following nonlocal velocity \[ \K(\cdot) \star u(\cdot,t)(x) + a(t), \] where the nonlocal term \[ \K(\cdot) \star u(\cdot,t)(x) = \int_{\R} \K(x-z)\,u(z,t)\,dz \] represents the self-force generated by the interactions among dislocations, while the term $a(t)$ corresponds to the external force induced by the applied shear strain field.
 It is defined in this way to study the time evolution of the dislocation densities. For a comprehensive overview of the modeling framework, we refer the reader to the work of O. Alvarez et al. ~\cite{alvarez2006dislocation}.\\
Before stating our assumptions, let us recall the definition of the so-called Zygmund space:
\begin{equation}\label{3defLlogL}
    L\, \log \,L(\R) = \left\{ f \in L^1_{{\rm loc}}(\R) \, : \, \int_{\R} |f|\ln( e + |f|) \, \dd x <  \infty \right\},
\end{equation}
which is a Banach space with the norm 
\begin{align*}
\|f\|_{L\,\log \,L(\R)} = \inf \left\{ \mu > 0 \, : \, \int_{\R} \frac{|f|}{\mu} \,\ln \left( e + \frac{|f|}{\mu} \right) \, \dd x \leq 1 \right\}.
\end{align*}
This functional setting naturally arises in our analysis, in particular in the assumptions on the initial data. 
In the sequel, we consider convenient assumptions on $\K$, $a$, and $u_0$ ensuring the existence of at least one continuous viscosity solution to~\eqref{3eq:eikonal} (see for instance~\cite{1}). These assumptions are as follows:
\smallskip
\begin{itemize}
	\item [\textbf{(H0)}] \label{3a} {\bf{Shear strain field}}: the function $a$ belongs to $C(0,T)$.
\item[\textbf{(H1)}] {\bf{Initial data}}: $u_0 \in L^\infty(\R)$ with $\partial_x u_0 \in L^1(\R) \cap L\,\log\,L(\R)$.

\item[\textbf{(H2)}] {\bf Kernel}: $\K \in L^1(\R)$ is an even function such that
\begin{equation}\label{3fourier transform is negative}
\displaystyle \int_{\R} \mathcal{K}(x) \dd x = 0\quad \mbox{and}\quad 
\mathcal{F}(\mathcal{K})(\xi) \leq 0, \quad \forall \xi \in \R. 
\end{equation}
\end{itemize}

Our aim in this paper is to construct a numerical scheme that approximates problem \eqref{3eq:eikonal}. Following~\cite{1}, one can show that the solutions to~\eqref{3eq:eikonal} satisfy the following gradient entropy estimate for all $t\in (0,T)$:
\begin{align}\label{31.entv}
    \int_{\R} f\left(|\pa_x u (x, t)|\right) \, \dd x - \int_0^t \int_{\R} \left(\K(\cdot)\star \pa_x u(\cdot,s)(x) \right) \, \pa_x u(x,s) \, \dd x \dd s \leq C,
\end{align}
where
\begin{equation}\label{3def:entropydensity}
    0 \leq f(x) = \begin{cases}
        x \ln(x) + \frac{1}{e} & \quad\mbox{if } x\geq 1/e,\\
        0  & \quad \mbox{if } 0 \leq x \leq  1/e.
    \end{cases}          
\end{equation}
This inequality is a key estimate for establishing the existence of solutions to~\eqref{3eq:eikonal}. For this reason, when we design a numerical scheme for~\eqref{3eq:eikonal}, the scheme must preserve a similar gradient entropy estimate at the discrete level. This property is essential to prove convergence. However, due to the lack of regularity in the velocity field, we cannot work directly with the initial model. To solve this difficulty, we need to derive a regularized version of~\eqref{3eq:eikonal} which admits a gradient entropy estimate similar to~\eqref{31.entv}. Hence, this paper has two main objectives:
\begin{itemize}
\item Derive a reliable periodic regularized model from~\eqref{3eq:eikonal} by regularizing the singular kernel $\K$ thanks to the F\'ejer kernel;
\item Design a convergent numerical scheme for this periodic regularized model.
\end{itemize}
The numerical scheme designed for the regularized model is introduced in Section~\ref{3numerical scheme} . It is a one step implicit/explicit (IMEX) Euler in time and finite difference in space scheme. We prove that the scheme is monotone, stable, and consistent, and that it preserves a discrete gradient entropy estimate. Then, using the theory of viscosity solutions and the convergence result for monotone schemes established by Barles and Souganidis in~\cite{barles_souganidis_1991}, we conclude that the discrete solution converges to the viscosity solution of the continuous problem~\eqref{3eq:eikonal}.

\subsection{State of the art and main novelties of this work}
Let us briefly review some analytical and numerical studies related to nonlocal Hamilton--Jacobi and transport equations, particularly in connection with dislocation dynamics.

In \cite{ZHIZ25}, Al Zareef et al. studied the same equation as in the present work, which can be seen as a special case of our setting. By assuming monotonicity of the solution, they were able to remove the absolute value in the eikonal term, thereby reducing the model to a transport equation. Based on this reformulation, they designed a convergent semi-explicit upwind scheme and established a discrete gradient entropy estimate, proving convergence toward a weak solution in the sense of distributions. In the present work, we remove this monotonicity assumption. As a consequence, the absolute value cannot be eliminated, and the equation retains its genuinely nonlinear eikonal structure. This prevents convergence in the distributional sense, due to the lack of continuity when passing to the limit through the absolute value of the gradient.

More generally, several works have addressed nonlocal Hamilton--Jacobi equations under structural assumptions ensuring regularity or monotonicity. For instance, in the homogenization setting of one-dimensional dislocation models, Ghorbel et al. \cite{ghorbel2008numerical} studied a nonlocal Hamilton--Jacobi equation under periodicity and Lipschitz regularity assumptions on the external stress, together with structural conditions on the kernel ensuring coercivity-type inequalities. Their numerical scheme is monotone under suitable CFL conditions.

Another contribution was the work of Ghorbel and Monneau in \cite{ghorbel2010well}, where they studied a one-dimensional nonlocal transport equation modeling parallel dislocations. They proved the global well-posedness of viscosity solutions and designed a monotone finite difference scheme together with an error estimate. Their analysis relies on strong structural assumptions, including Lipschitz continuity of the initial data, strong regularity of the velocity, and the nonnegativity of the velocity. However, in our setting, we neither assume Lipschitz continuity of the initial data nor impose any sign condition on the velocity.

In a related framework, Alvarez et al. \cite{alvarez2006convergence, alvarez2006convergent} established convergence results for first-order monotone schemes for nonlocal eikonal equations within the Crandall--Lions framework. Their analysis assumes that the velocity is bounded and Lipschitz continuous with respect to all variables, and that the initial data is globally Lipschitz continuous. These hypotheses ensure the validity of the comparison principle and stability of the numerical scheme. 

We also mention the work of Souganidis \cite{souganidis1985approximation}, which provides convergence results for general approximation schemes of viscosity solutions for first-order Hamilton–Jacobi equations, together with explicit error estimates. The framework developed in \cite{souganidis1985approximation} is closely connected to the viscosity convergence analysis used in the present paper. However, unlike the local equations considered there, our model involves a nonlocal convolution-dependent velocity field and requires additional entropy-based arguments to handle the nonlinear eikonal structure without sign assumptions on the gradient.

In addition, we mention the contribution of El Hajj and Forcadel \cite{MR2373180}, who studied a $(2 \times 2)$ quasi-monotone system in a one-dimensional setting, proposed an explicit numerical scheme, and proved its convergence to the Lipschitz continuous solution using comparison principle property. Their work was inspired by Alvarez et al. \cite{alvarez2006convergent}, and established a Crandall–Lions–type rate of convergence \cite{1350}. Later, El Hajj and Oussaily \cite{MR4734513} considered an implicit numerical scheme for the same system as in \cite{MR2373180}, assuming that the initial data are non-decreasing functions and being in $H^1$-regularity. Their result relies on an $L^2$ gradient estimate that was proved in the continuous setting.

For non-conservative $(d \times d)$ hyperbolic systems with $d \geq 2$, Monasse and Monneau \cite{MoMo14} established the convergence of a semi-explicit scheme for diagonal strictly hyperbolic systems by means of discrete gradient entropy estimates, under structural assumptions including strict hyperbolicity, global Lipschitz continuity of the velocity, and monotone, bounded initial data. Their approach relies on this type of estimate, originally derived in the continuous framework \cite{MR3104078}. Their analysis is carried out within the framework of vanishing viscosity solutions, first introduced by Bianchini and Bressan \cite{MR2150387}.

Nonetheless, we mention the work of Al Zohbi et al. \cite{alzohbi_elhajj_jazar_2022}, who studied a $(d \times d)$ system of nonlinear eikonal equations in one space dimension. In their setting, the velocity is assumed to be bounded, continuous, Lipschitz continuous with respect to the spatial variable, and monotone with respect to the unknown. The initial data is additionally assumed to be bounded and continuous. They introduced a semi-explicit numerical scheme that satisfies a $BV$ estimate and proved its convergence in the viscosity sense. Their analysis first treats discontinuous viscosity solutions in the $BV$ setting, and, under additional assumptions ensuring a comparison principle, yields convergence toward the unique continuous viscosity solution.

As noticed, all the aforementioned works rely on some combination of key structural assumptions, such as Lipschitz continuity of the initial data, regularity and boundedness of the velocity, monotonicity properties, or a fixed sign condition.

In contrast, the model studied in the present work operates under significantly weaker regularity assumptions. The nonlocal eikonal equation considered here involves a velocity that is neither monotone nor of fixed sign, and does not satisfy any Lipschitz regularity condition. Furthermore, the initial data is not assumed to be Lipschitz continuous. As a consequence, the comparison principle is no longer available, and standard convergence frameworks cannot be applied. In addition, convergence in the distributional sense cannot be established due to the lack of continuity when passing to the limit through the absolute value of the gradient. Despite these difficulties, we prove, using the discrete gradient entropy estimate, the convergence of our numerical scheme toward a viscosity solution of the nonlocal eikonal equation. The theoretical results are further supported by numerical simulations, which illustrate the behavior of the scheme.

\subsection{Derivation of a periodic regularized model} \label{3derivation of a periodic model}
For a fixed positive number \(P\), we introduce a periodic approximation of the initial data in order to study the bulk behavior of the material away from boundary effects. More precisely, we define
\begin{equation}\label{31.initperiod}
u_0^P(x):=u_0(x)-L^P x,
\qquad x\in[-P,P],
\end{equation}
where
\begin{equation}\label{3def LP}
L^P=\frac{u_0(P)-u_0(-P)}{2P}.
\end{equation}
The function \(u_0^P\) is then extended by \(2P\)-periodicity over $\R$. The linear term \(L^P x\) is introduced precisely to ensure that
\[
u_0^P(P)=u_0^P(-P),
\]
which guarantees the periodicity of the initial data. This construction allows us to describe the evolution of dislocations in a periodic domain, avoiding boundary effects, thereby capturing the effective bulk dynamics of the material. In the periodic setting, the equation is naturally posed on the torus
\begin{align}\label{3defI}
I_P:=\mathbb{R}/(2P\mathbb{Z}).
\end{align}

As already explained, we need to derive a periodic counterpart of \eqref{31.entv}. To this end, we carefully modify the kernel $\K$. Following the approach introduced in \cite{ZHIZ25}, we first periodize the kernel \(\K\). More precisely, we define the \(2P\)-periodic function \(\K^P\) by
\[
\K^P(x)=\K(x),
\qquad x\in[-P,P],
\]
and extend it periodically over \(\mathbb{R}\). We then consider the periodic problem
\begin{align}\label{32.periodic}
\begin{cases}
\partial_t u^P(x,t)
=
\left[
\left(\K^P(\cdot)\star u^P(\cdot,t)\right)(x)
+a(t)
\right]
|\partial_x u^P(x,t)|
& \text{in } I_P\times(0,T),
\\
u^P(x,0)=u_0^P(x)
& \text{in } I_P.
\end{cases}
\end{align}

However, the direct use of the periodized kernel \(\K^P\) is not sufficient to establish the desired discrete gradient entropy estimate. We refer the reader to \cite{ZHIZ25} for a detailed explanation of this issue. For this reason, and following the approach developed in \cite{ZHIZ25}, we introduce a suitable regularization based on the Cesàro means of the Fourier expansion. To this end, let \(M>1\) be a fixed integer and define the Fejér kernel
\begin{equation}\label{3Fejer kernel}
F_M(x)
=
\frac{1}{M}
\sum_{\ell=0}^{M-1} D_\ell(x),
\end{equation}
where \(D_\ell\) denotes the Dirichlet kernel
\[
D_\ell(x)
=
\sum_{|m|\le \ell}
e^{i\pi m x/P},
\qquad \ell=0,\dots,M-1.
\]
Next, we introduce the modified periodic kernel
\begin{align}\label{31.defKP}
\K_M^P(x)
=
\K^P(x)
-
\frac{2F_{2M}(x)}{P}
\int_{|x|\ge P} |\K(x)|\,dx,
\qquad x\in\mathbb{R}.
\end{align}
We then define the Cesàro mean of order \(M\) associated with the Fourier series of \(\K_M^P\):
\begin{align}\label{3cesaro}
\sigma_M^P(x)
:=
\sigma_M(\K_M^P)(x)
=
\frac1M
\sum_{\ell=0}^{M-1}
\sum_{|m|\le \ell}
c_m(\K_M^P)
\,e^{i\pi m x/P}
=
\frac{1}{2P}
\left(F_M\star \K_M^P\right)(x),
\qquad x\in I_P,
\end{align}
where \(c_m(\K_M^P)\) denotes the \(m\)-th Fourier coefficient of \(\K_M^P\), namely
\[
c_m(\K_M^P)
=
\frac{1}{2P}
\int_{I_P}
\K_M^P(x)\,
e^{-i\pi m x/P}\,dx,
\qquad m\in\mathbb{Z}.
\]
Moreover, in \eqref{32.periodic} and \eqref{3cesaro} the symbol $\star$ denotes the convolution between $2P$-periodic functions defined, for any $w$ and $z\in L^1(I_P)$, as
\begin{align}\label{3conv.prod.perio}
    (w\star z)(x) = \int_{I_P} w(x-y) \, z(y) \, \dd y, \quad \mbox{for a.e. }x \in I_P.
\end{align}
Hence, in this paper, we will study the solutions, denoted $u^P_M$, of the following regularized periodic system:
 \begin{align}\label{31.equPaux}
 \begin{cases}
\partial_t u^P_M(x, t) = \left[\left(\sigma_M^P(\cdot) \star u^P_M(\cdot,t)\right)(x) + a(t)\right] \, \left|\partial_x u^P_M(x, t) \right| & (x,t) \in I_P \times (0, T),\\
u^P_M(x,0) = u^P_0(x) & x \in I_P,
\end{cases}
 \end{align}
 where $I_P$ is defined in~\eqref{3defI}. Let us assume that $P$, $M$ and $u^P_0$ satisfy the following assumptions:

 \begin{itemize}
\item[\textbf{(H3)}] Parameters of the periodic model: $P$ is a real positive number and $M > 1$.

\item[\textbf{(H4)}] Periodic initial data: for any $P >0$, the function $u^P_0$ belongs to $L^\infty(I_P)$ with $\pa_x u^P_0 \in L^1(I_P) \cap L\,\log\,L(I_P)$. 
\end{itemize}
It is important to understand how the periodic initial data $u^P_0$ relates to the original initial data $u_0$. The following lemma provides the corresponding uniform estimates.
\begin{lemma}\label{3main_2_lem.init.perio.to.nonperiod}
Let assumption~\textbf{\emph{(H1)}} holds and assume that $P\ge 1$. Then, the function $u^P_0$, given by~\eqref{31.initperiod}, satisfies, uniformly with respect to $P$,
\begin{align*}
    u^P_0 \in L^\infty(I_P), \quad \pa_x u^P_0 \in L^1(I_P)\cap L\,\log \,L(I_P).
\end{align*}    
\end{lemma}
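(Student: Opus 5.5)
The plan is to work directly from the definition $u^P_0(x)=u_0(x)-L^P x$ on $[-P,P]$, extended $2P$-periodically, and to control the slope $L^P$ uniformly in $P\ge 1$. First note that $u_0\in L^\infty(\R)$ with $\pa_x u_0\in L^1(\R)$ implies that $u_0$ has a representative of bounded variation, hence is defined pointwise up to the choice of a right or left limit. So $u_0(\pm P)$ make sense and $|u_0(P)-u_0(-P)|\le \|\pa_x u_0\|_{L^1(\R)}$. This gives
\[
|L^P|\le \frac{\|\pa_x u_0\|_{L^1(\R)}}{2P}, \qquad |L^P|\,P\le \tfrac12\|\pa_x u_0\|_{L^1(\R)}, \qquad |L^P|\le \tfrac12\|\pa_x u_0\|_{L^1(\R)}\ \text{ since } P\ge1 .
\]
This is the key input, and everything else follows from it.

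For the $L^\infty$ bound, on $[-P,P]$ I will estimate
\[
|u^P_0(x)|\le \|u_0\|_{L^\infty}+|L^P|P\le \|u_0\|_{L^\infty}+\tfrac12\|\pa_x u_0\|_{L^1(\R)}.
\]
This bound is independent of $P$ and carries over to the periodic extension. In fact $|L^P|P\le \|u_0\|_\infty$ also works directly.

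For the derivative, the construction ensures $u^P_0(P)=u^P_0(-P)$, so the periodic extension creates no jump at $\pm P$. Hence in the distributional sense on $I_P$ we have $\pa_x u^P_0=\pa_x u_0-L^P$ on $(-P,P)$, with no Dirac mass at the junction. This is the one point requiring care: at a jump of $u_0$ located exactly at $\pm P$, I would use the chosen one-sided values so that the periodized function is continuous across the junction. Then
\[
\|\pa_x u^P_0\|_{L^1(I_P)}\le \|\pa_x u_0\|_{L^1(\R)}+2P|L^P|\le 2\|\pa_x u_0\|_{L^1(\R)}.
\]

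For the $L\log L$ part, the main obstacle is that the constant $L^P$ lives on an interval of length $2P$, so its contribution must be controlled using its smallness. I would use that $\phi(s)=s\ln(e+s)$ is increasing and convex with $\phi(0)=0$. Convexity gives
\[
\phi(|g+c|)\le \tfrac12\phi(2|g|)+\tfrac12\phi(2|c|),
\]
and $\phi(2s)\le 2s\ln(e+2s)\le 2s(\ln 2+\ln(e+s))\le C\phi(s)$. Applying this with $g=\pa_x u_0$ and $c=L^P$, the $g$-term is bounded by $C\int_\R\phi(|\pa_x u_0|)$. The constant term gives
\[
2P\,\phi(2|L^P|)\le 4P|L^P|\ln\bigl(e+\|\pa_x u_0\|_{L^1}\bigr)\le 2\|\pa_x u_0\|_{L^1}\ln\bigl(e+\|\pa_x u_0\|_{L^1}\bigr),
\]
which is uniform in $P$. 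Both the modular $\int_{I_P}\phi(|\pa_x u^P_0|)$ and hence the Luxemburg norm are therefore bounded independently of $P$; for the norm one can rescale by $\mu$ and repeat the same estimate, or equivalently use the triangle inequality together with $\|L^P\|_{L\log L(I_P)}$ computed explicitly for a constant function.
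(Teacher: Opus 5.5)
Your argument is correct, but it departs from the paper's in two places: how the slope is controlled, and, above all, the $L\log L$ step.

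The paper bounds $2P|L^P|=|u_0(P)-u_0(-P)|$ by $2\|u_0\|_{L^\infty(\R)}$, so its $L^1$ bound is $\|\pa_x u_0\|_{L^1(\R)}+2\|u_0\|_{L^\infty(\R)}$. For the Zygmund part it uses the triangle inequality for the Luxemburg norm, $\|\pa_x u^P_0\|_{L\log L(I_P)}\le\|\pa_x u_0\|_{L\log L(\R)}+\|L^P\|_{L\log L(I_P)}$. It then estimates the norm of the constant $L^P$ on an interval of length $2P$ through \eqref{3w in llogl} of Lemma~\ref{3Llog L estimate}.

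You instead bound $2P|L^P|$ by $\|\pa_x u_0\|_{L^1(\R)}$, which is equally uniform and slightly sharper for the $L^1$ estimate. You then work with the modular of $\phi(s)=s\ln(e+s)$, using convexity and the $\Delta_2$ bound $\phi(2s)\le 2(1+\ln 2)\phi(s)$. Every inequality you write checks out.

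The paper's route is shorter once Lemma~\ref{3Llog L estimate} is available. Yours is self-contained and avoids computing the Luxemburg norm of a constant on a growing interval. Also, since $f\le\phi$ pointwise, your route directly gives a uniform bound on $\int_{I_P}f(|\pa_x u^P_0|)\,\dd x$, which is exactly what Lemma~\ref{3lem.boundinitent} needs later.

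To make your passage from modular to norm explicit, set $K=\max\{1,\int_{I_P}\phi(|\pa_x u^P_0|)\,\dd x\}$. Convexity and $\phi(0)=0$ give $\phi(s/K)\le\phi(s)/K$, hence $\|\pa_x u^P_0\|_{L\log L(I_P)}\le K$.

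Finally, your worry about a jump of $u_0$ at $\pm P$ is moot. Since $\pa_x u_0\in L^1(\R)$ is a function rather than a measure, $u_0$ has a locally absolutely continuous representative. So $u_0(\pm P)$ is well defined, and the periodic extension is continuous at the junction by the choice of $L^P$.
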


\begin{proof}
Let $x \in [-P,P)$, then, since $P>0$, we have
\begin{align}\label{3estimationLinP}
    \|u^P_0\|_{L^\infty(I_P)}\leq \|u_0\|_{L^\infty(\R)} + \frac12 \left|u_0(P)-u_0(-P)\right|\leq 2\|u_0\|_{L^\infty(\R)},
\end{align}
so that $u^P_0 \in L^\infty(I_P)$ uniformly w.r.t $P$. Moreover, as $P>0$, it holds
\begin{align}\label{3estimationL1P}
    \|\pa_x u^P_0\|_{L^1(I_P)} \leq \|\pa_x u_0\|_{L^1(\R)} + 2\|u_0\|_{L^\infty(\R)}. 
\end{align}
Then, applying inequality~\eqref{3w in llogl} in Lemma~\ref{3Llog L estimate} to the constant function equal to one, we obtain
\begin{align}\label{3estimationLlogLP}
    \|\pa_x u^P_0\|_{L\,\log\,L(I_P)} 
    &\leq  \|\pa_x u_0\|_{L\,\log\,L(I_P)} + \|u_0\|_{L^\infty(\R)} 
    \left(\frac{1}{P}+ 2\ln(1+e^2)+\frac{2}{e}\right), \\
    & \leq \|\pa_x u_0\|_{L\,\log\,L(\R)} + \|
    u_0\|_{L^\infty(\R)} 
    \left(1+ 2\ln(1+e^2)+\frac{2}{e}\right),
\end{align}
where we have used the assumption $P\ge 1$ for the last inequality. 
\end{proof}
Therefore, the initial condition~\eqref{31.initperiod} satisfies assumption~\textbf{(H4)}.

\subsection{Organization of the paper}
The paper is organized as follows. In Section~\ref{3numerical scheme}, we introduce the numerical scheme and state the main results. Section~\ref{3proving_gradient_entropy} is devoted to the analysis of the properties of the proposed scheme and is divided into two subsections. In the first subsection, we prove the well-posedness of the discrete problem via the Banach fixed point theorem and establish discrete total variation and $L^{\infty}$ estimates. In the second subsection, we establish a monotonicity property for the discrete entropy functional, after several technical lemmas on convexity and Fourier coefficients of the periodic kernel. Next, Section \ref{3Convergence of the scheme} is devoted to the proof of convergence towards the viscosity solution of the problem. Finally, we present numerical experiments in Section \ref{3numerical experiment}.

\section{Numerical scheme and main results}\label{3numerical scheme}

\subsection{Numerical scheme} We first introduce some notation for the meshes in time and space. Let $N$ and $N_T$ be positive integers. We define the time step $\Delta t > 0$ by $\Delta t = T/N_T$ and the space step $\Delta x > 0$  by $\Delta x = P/N$. Accordingly, we define the following sequences
\[
t_n =n\Delta t, \quad \forall n \in \{0,\ldots, N_T\},
\]
 and
\[
x_i = i \Delta x, \quad \forall i\in \I := \Z/2N\Z.
\]
Next, we discretize the initial condition $u_0^P$ as
\begin{align}\label{3sch:IC}
u_i^{P, 0} =  u_0^P(x_i), \quad \forall i \in \I.
\end{align}
 Then, for a given vector $u^{P, n} = (u^{P, n} _i)_{i \in \I}$, where $u^{P, n}_i$ approximates $u^P_M(x_i,t_n)$, we define the vector $u^{P, n+1}= (u^{P, n+1} _i)_{i \in \I}$ as the solution of the following system: 
\begin{align}\label{3sch:eq}
  \dfrac{u_i^{P, n+1} - \frac{u^{P, n}_{i+1}+u_i^{P, n}}{2}}{\Delta t} &=   \lambda_i[u^{P, n + 1}] \, \left|\theta^{P, n}_{i+1/2} \right|, \quad \forall i \in \I,
\end{align}
where $ \theta^{P,n}_{i+1/2}$ is the discrete gradient of $u^{P, n}_i$ defined by 
\begin{align}\label{3def:disgrad}
    \theta^{P, n}_{i+1/2} := \dfrac{u^{P, n}_{i+1}-u^{P, n}_i}{\Delta x}, \quad \forall i\in\I.
\end{align}
Finally, we introduce
\begin{equation}\label{3def:lambda}
 \lambda_i[u^{P, n + 1}] = a(t_{n + 1}) + \sum_{j \in \I} \Delta x \, \sigma^P_{M, j} \, u^{P, n + 1}_{i - j}, \quad \forall i \in \I,
 \end{equation}
  with $\sigma_{M,j}^P=\sigma_{M}^P(x_j)$ and where $\sigma_{M}^P$ is the Ces\`aro mean of order $M$ 
 of the Fourier series of $\K^P_M$, introduced previously in \eqref{3cesaro}. 
 \begin{remark}
 It is clear that the vector $u^{P,n}$, for any $1\leq n\leq N_T$, solution to~\eqref{3sch:eq}, depends on $M$. However, when no confusion can occur, we slightly abuse the notation and omit $M$ from its indices.
\end{remark}

 %I can put the same as the remark done in the first article and use this new definition of the kernel.
%where $\sigma^P_{M, j} = \sigma^P_M(x_j)$ with $\sigma^P _M$ is the Cesàro mean of order $M$ of the Fourier series of $\K^P_M$, introduced previously in \eqref{3cesaro}. where, defining $x_{i\pm 1/2} := (i\pm 1/2) \Delta x$ for any $i \in \I$, we set
%\begin{align}\label{3def:diskernel}
%\K_i = \frac{1}{\Delta x} \int_{x_{i-1/2}}^{x_{i+1/2}} \K(x) \, \dd x, \quad \forall i \in \I.
%\end{align}

%%%%%%%%%%%%%%%%%%%%%%%%%%%%%%%%%%%%%%%%%%%%%%%%%%%%%%%%%%%%%%%%%%%%%%%%%%%%%%%
\subsection{Main results} 
Our first main result deals with the well-posedness of  the scheme \eqref{3sch:IC}-\eqref{3sch:eq}, as well as the qualitative properties of its solution.

\begin{theorem}[Existence of solution]\label{3theorem-properties}
    Let assumptions \textbf{\emph{(H0)-(H4)}} hold, and assume that 
    \begin{align}\label{3deltat/deltax}
    \dfrac{\Delta t}{\Delta x} = \frac{1}{4\left(10 \|\K\|_{L^1(\R)} \|u_0\|_{L^{\infty}(\R)} + \|a\|_{L^{\infty}(0, T)}\right)}.
\end{align}
Then, the scheme~\eqref{3sch:IC}-\eqref{3sch:eq} admits a unique solution $u^{P, n} = (u^{P, n} _i)_{i \in \I}$ for any $1\le n \le N_T$, such that   
\begin{align}\label{3ineq:Linfty}
\max_{i \in \I} \left|u^{P,n}_i\right| \leq 2 \|u_0\|_{L^{\infty}(\R)}.
\end{align}
Moreover, this solution satisfies the following discrete total variation estimate
\begin{align}\label{3TV:ineq}
\operatorname{TV}\!\left(u^{P,n}\right) = \sum_{i\in\I} |u^{P,n}_{i+1}-u^{P,n}_i| \leq \|\partial_x u_0\|_{L^1(\R)},
\quad \mbox{for } n \in \{1,\ldots, N_T\}.
\end{align}
Finally, for all $n \in \{1,\ldots,N_T\}$, there exists a positive constant $C = C(\|\K\|_{L^1(\R)}, \|\partial_x u_0\|_{L^1(\R)})$, such that the following discrete gradient entropy estimate
\begin{align}\label{3gradient_entropy_estimate}
\sum_{i \in \I} \Delta x \, f\!\left(\left|\theta^{P, n}_{i+1/2}\right|\right)
\leq 2\sum_{i \in \I} \Delta x 
f\!\left(\left|\theta^{P, 0}_{i+1/2}\right|\right)
 + \ln(2)\,\|\partial_x u_0\|_{L^1(\R)}
+ 2 e^{-1}
+ C\, T,
\end{align}
holds, where we recall definition~\eqref{3def:entropydensity} of $f$.
\end{theorem}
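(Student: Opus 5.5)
We argue by induction on $n$, treating at each step well-posedness, the $L^\infty$ bound \eqref{3ineq:Linfty}, the total variation bound \eqref{3TV:ineq}, and then, separately, the entropy estimate \eqref{3gradient_entropy_estimate}.

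\emph{Existence and the $L^\infty$ bound.} Given $u^{P,n}$ satisfying \eqref{3ineq:Linfty} and \eqref{3TV:ineq}, we write \eqref{3sch:eq} as a fixed point problem $u^{P,n+1}=\Phi(u^{P,n+1})$, where
\[
\Phi(v)_i=\tfrac{u^{P,n}_{i+1}+u^{P,n}_i}{2}+\Delta t\,\lambda_i[v]\,|\theta^{P,n}_{i+1/2}|.
\]
Since $\lambda$ is affine in $v$, we have
\[
|\Phi(v)_i-\Phi(w)_i|\le \Delta t\,|\theta^{P,n}_{i+1/2}|\,\Delta x\sum_j|\sigma^P_{M,j}|\,\|v-w\|_\infty .
\]
Young's inequality for \eqref{3cesaro} (with $\|F_M\|_{L^1}/(2P)=1$) and the definition \eqref{31.defKP} give $\|\sigma^P_M\|_{L^1(I_P)}\le \|\K^P_M\|_{L^1(I_P)}\le 5\|\K\|_{L^1(\R)}$, up to the Riemann-sum error, which we control by a discrete Fejér argument. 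We also have $\Delta t|\theta^{P,n}_{i+1/2}|\le \frac{\Delta t}{\Delta x}\cdot 4\|u_0\|_\infty$. The CFL condition \eqref{3deltat/deltax} therefore makes $\Phi$ a contraction on $(\R^{2N},\|\cdot\|_\infty)$, and Banach's theorem yields a unique solution. For the $L^\infty$ bound, we use
\[
|\lambda_i|\le \|a\|_\infty+10\|\K\|_{L^1}\|u_0\|_\infty=:\Lambda,
\]
which holds on the closed ball $\{\|v\|_\infty\le 2\|u_0\|_\infty\}$. On this ball, with $\nu=\Delta t/\Delta x$, write $\Phi(v)_i=\alpha u^{P,n}_i+\beta u^{P,n}_{i+1}$, where $\alpha=\tfrac12\mp\nu\lambda_i$, $\beta=\tfrac12\pm\nu\lambda_i$ (the signs follow $\operatorname{sign}\theta^{P,n}_{i+1/2}$). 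Since $\nu\Lambda\le 1/4$, both weights are nonnegative and sum to $1$, so $\Phi$ maps the ball into itself. We then run the fixed point argument on that ball, and \eqref{3ineq:Linfty} follows.

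\emph{Total variation.} From the same representation, $u^{P,n+1}_i$ is a convex combination of $u^{P,n}_i$ and $u^{P,n}_{i+1}$, so $u^{P,n+1}_i$ lies between these two values. It is not immediate that this alone bounds the variation, so we prove a discrete maximum principle on monotone pieces. Going from one local extremum of $u^{P,n}$ to the next, the values $u^{P,n+1}_i$ stay within the corresponding intervals and are interlaced. Hence $\operatorname{TV}(u^{P,n+1})\le \operatorname{TV}(u^{P,n})$, and the claim reduces to $\operatorname{TV}(u^{P,0})\le\|\pa_x u_0\|_{L^1}$. We obtain this from $u^P_0=u_0-L^Px$: for monotone-friendly data, the periodic variation of the sampled data is bounded by the variation of $u_0$ on $[-P,P]$. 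If this bound needs slack, we may also appeal to Lemma \ref{3main_2_lem.init.perio.to.nonperiod}.

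\emph{Entropy estimate (main obstacle).} We compute the evolution of the gradient. Setting $\nu=\Delta t/\Delta x$, we have
\[
\theta^{n+1}_{i+1/2}=\tfrac12(\theta^n_{i+1/2}+\theta^n_{i+3/2})+\nu\big(\lambda_{i+1}|\theta^n_{i+3/2}|-\lambda_i|\theta^n_{i+1/2}|\big).
\]
We rewrite $\lambda_{i+1}=\lambda_i+\Delta x\,(\sigma\star\theta^{n+1})_i$, so that $\theta^{n+1}$ becomes a convex-type combination of $\theta^n_{i+1/2}$ and $\theta^n_{i+3/2}$ plus a term $\Delta t\,(\sigma\star\theta^{n+1})_i|\theta^n_{i+3/2}|$. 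Convexity of $f$, applied through $f(|\cdot|)$ (the technical convexity lemmas), gives
\[
\sum\Delta x f(|\theta^{n+1}|)\le\sum\Delta x f(|\theta^n|)+\Delta t\sum_i\Delta x f'(|\theta^{n+1}|)\,(\sigma\star\theta^{n+1})_i\,|\theta^n|+\dots .
\]
The key step is to replace $f'(|\theta|)\,|\theta|$ by $\theta\ln|\theta|$-type quantities. The resulting quadratic form $\sum_i\theta_i(\sigma\star\theta)_i$ is then nonpositive, since the Fourier coefficients of $\sigma^P_M$ are $\le 0$. This follows from \eqref{3fourier transform is negative} together with the Fejér correction in \eqref{31.defKP}, which absorbs the tail $\int_{|x|\ge P}|\K|$, and from the nonnegativity of Fejér multipliers. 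The remainders are controlled in two ways. The region $|\theta|\le 1/e$, where $f=0$, contributes the constant $2e^{-1}$. The mismatch between $\ln|\theta^{n+1}|$ and $\ln|\theta^n|$ is bounded using the contraction factor $\tfrac12$, which produces the factor $2$ and the term $\ln 2\,\operatorname{TV}$; the TV estimate then gives $\ln(2)\|\pa_x u_0\|_{L^1}$. Finally, the implicit-explicit mismatch yields $C\,\Delta t\,\|\sigma\|_{L^1}\operatorname{TV}$ per step, which sums to $CT$. The delicate point I expect is exactly this conversion from $f'(|\theta^{n+1}|)|\theta^n|$ to a sign-definite form in $\theta^{n+1}$. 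I would first try to treat it with the inequality $x\ln y\le x\ln x - x + y$.
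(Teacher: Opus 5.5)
The genuine gap is the entropy estimate. You stop exactly where the proof has to happen, and the route you sketch does not close.

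\emph{Why your route fails.} You apply convexity to $f(|\cdot|)$ with the sign-dependent weights $\frac12\pm\frac{\Delta t}{\Delta x}\lambda\,\mathrm{sign}(\theta)$. Their sum differs from $1$ by an $O(1)$ amount wherever $\theta^{P,n}$ changes sign. On top of that, rewriting $\lambda_{i+1}$ through $\lambda_i$ breaks the flux structure that makes the $f$-terms telescope. What remains is a term like $\Delta t\sum_i\Delta x\,f'(\cdot)\,(\bar\sigma^P_M\star\theta^{P,n+1})_i\,|\theta^{P,n}_{i+3/2}|$, and this cannot be controlled:
\begin{itemize}
\item $(\bar\sigma^P_M\star\theta^{P,n+1})_i$ changes sign, so no one-sided pointwise inequality such as $x\ln y\le x\ln x-x+y$ can be applied to it term by term.
\item The logarithm prevents it from being a quadratic form, so the sign of the Fourier coefficients cannot be used.
\item The time-level mismatch is not an $O(\Delta t)$ error. $\theta^{P,n+1}_{i+1/2}$ is essentially the average of $\theta^{P,n}_{i+1/2}$ and $\theta^{P,n}_{i+3/2}$, an $O(1)$ change for rough data, so it cannot be absorbed into $C\,\Delta t\,\operatorname{TV}$.
\end{itemize}
In the continuum the logarithm disappears only after combining the transport and source parts through $sf'(s)-f(s)=s-e^{-1}$, and your sketch never does this. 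Your account of the constants $2$, $\ln 2$, $2e^{-1}$ is reverse-engineered. In the paper they come only from recombining $f(\theta^+)+f(\theta^-)$ into $f(|\theta|)$, which is in fact an identity, since $\theta^+\theta^-=0$ and $f(0)=0$.

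\emph{The missing idea: the splitting $\theta=\theta^+-\theta^-$.}
\begin{enumerate}
\item Adding and subtracting \eqref{3eq:theta} and \eqref{3absolute theta} gives $\theta^{\pm,P,n+1}_{i+1/2}\le\widetilde\theta^{\pm,P,n+1}_{i+1/2}$. The weights are sign-independent and in flux form: $\frac12+\frac{\Delta t}{\Delta x}\lambda_{i+1}$ on $\theta^{+,P,n}_{i+3/2}$ and $\frac12-\frac{\Delta t}{\Delta x}\lambda_i$ on $\theta^{+,P,n}_{i+1/2}$, with the opposite signs for $\theta^-$.
\item These weights sum to $1-\mu^{\pm}$, where $\mu^+=-\mu^-=\frac{\Delta t}{\Delta x}(\lambda_i-\lambda_{i+1})$. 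Lemma~\ref{3convexity inequality of f} then makes the $f$-terms telescope exactly after summation over $i$.
\item Using $\ln(1-\mu)\le-\mu$ and Lemma~\ref{3technical}, $g(\widetilde\theta^{\pm},1-\mu^{\pm})$ is replaced by $\widetilde\theta^{\pm}$ at a cost $C\,\Delta t\,\|\K\|_{L^1}\operatorname{TV}$. This is the discrete form of the cancellation above.
\item The exact identity $\widetilde\theta^{-}-\widetilde\theta^{+}=-\theta^{P,n+1}_{i+1/2}$ holds. So does $\mu^+=-\Delta t\,(\bar\sigma^P_M\star\theta^{P,n+1})_i$, precisely because $\lambda$ is evaluated at $u^{P,n+1}$. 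Together they turn the remainder into $\Delta t\sum_i\Delta x\,\theta^{P,n+1}_{i+1/2}(\bar\sigma^P_M\star\theta^{P,n+1})_i$.
\item This is a quadratic form at a single time level. It is nonpositive by discrete Parseval: by aliasing, each DFT coefficient of the sampled vector $\bar\sigma^P_M$ is a nonnegative combination of the $c_m(\K^P_M)\le 0$. The continuous Fourier coefficients alone do not suffice.
\end{enumerate}

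\emph{The other parts.} Your existence and $L^\infty$ argument is the paper's. Use the discrete bound \eqref{3Normcesaro} directly rather than a continuous bound ``up to a Riemann-sum error'', since the CFL condition leaves no room for an error term. Your interlacing argument for monotonicity of $\operatorname{TV}$ is a valid alternative to the paper's index shift, and it is one line. If $w_i$ lies between $u_i$ and $u_{i+1}$, then $|w_{i+1}-w_i|\le|w_{i+1}-u_{i+1}|+|u_{i+1}-w_i|$, and summing over $i$ gives $\operatorname{TV}(w)\le\operatorname{TV}(u)$.

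However, your final reduction $\operatorname{TV}(u^{P,0})\le\|\pa_x u_0\|_{L^1(\R)}$ cannot be proved. The correct bound is $\|\pa_x u_0^P\|_{L^1(I_P)}\le\|\pa_x u_0\|_{L^1(-P,P)}+|u_0(P)-u_0(-P)|\le 2\|\pa_x u_0\|_{L^1(\R)}$. For $u_0=\arctan$ and large $P$, the periodic variation is close to $2\pi$, whereas $\|\pa_x u_0\|_{L^1(\R)}=\pi$. One step of the scheme removes only $O(\Delta x^2)$ of this, so \eqref{3TV:ineq} fails already at $n=1$. The paper's proof asserts the same inequality, so this is a defect in the stated constant rather than in your approach; carry $\|\pa_x u^P_0\|_{L^1(I_P)}$ instead.
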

The proof of Theorem~\ref{3theorem-properties} is carried out in Section~\ref{3proving_gradient_entropy} in several steps. First, we establish the existence and uniqueness of a discrete solution to the semi-explicit scheme~\eqref{3sch:IC}--\eqref{3sch:eq} by means of Banach’s fixed point theorem. Then, we derive a series of uniform estimates: the $L^{\infty}$ bound~\eqref{3ineq:Linfty}, the total variation estimate~\eqref{3TV:ineq}, and finally the discrete gradient entropy estimate~\eqref{3gradient_entropy_estimate}.

In our second main result, we prove the convergence of the scheme~\eqref{3sch:IC}--\eqref{3sch:eq}. In order to precisely state this result, we introduce some notation. Let the assumptions of Theorem~\ref{3theorem-properties} hold. Then, for any $\Delta x$ and $\Delta t$ (satisfying~\eqref{3deltat/deltax}), we set $\eps := (\Delta x,\Delta t)$, and define the following function:
\begin{align*}
    u^{P,\eps}_M(x_i,t_n) = u^{P,n}_i, \quad \forall i \in \I, \, 0\leq n \leq N_T,
\end{align*}
where of course $u^{P,n}$ is the unique solution of the scheme~\eqref{3sch:IC}--\eqref{3sch:eq} obtained in Theorem~\ref{3theorem-properties}. We also define, for any $(x, t) \in [x_i, x_{i + 1}] \times [t_n, t_{n+1}]$, its so-called $Q_1$-extension, still denoted $u^{P,\eps}_M$, as 
\begin{multline}\label{3defQ1ext}
u^{P, \varepsilon}_M(x, t) = 
\left( \frac{t - t_n}{\Delta t} \right)
\left[
\left( \frac{x - x_i}{\Delta x} \right) u^{P, n+1}_{i+1}
+ \left( 1 - \frac{x - x_i}{\Delta x} \right) u^{P, n+1}_i
\right] \\
\quad +
\left( 1 - \frac{t - t_n}{\Delta t} \right)
\left[
\left( \frac{x - x_i}{\Delta x} \right) u^{P, n}_{i+1}
+ \left( 1 - \frac{x - x_i}{\Delta x} \right) u^{P, n}_i
\right]. 
\end{multline}
so that $u^{P,\eps}_M(x_i,t_n)=u^{P,n}_i$. In the sequel, we will consider a sequence $\eps_m=(\Delta x_m, \Delta t_m)$ such that $\eps_m \to (0,0)$ as $m \to \infty$, where $\Delta x_m$ and $\Delta t_m$ satisfy~\eqref{3deltat/deltax} for all $m \in \mathbb{N}$. We claim that the sequence $(u^{P,\eps_m}_M)_{m \in \mathbb{N}}$ converges, as $m\to+\infty$, to a solution of~\eqref{31.equPaux} in the viscosity sense. More precisely, we have the following result.

\begin{theorem}[Convergence of the numerical scheme]\label{3theorem-convergence}
Assume the hypotheses of Theorem~\ref{3theorem-properties} hold. For $\eps = (\Delta x, \Delta t)$, with $\Delta x$ and $\Delta t$ satisfying the CFL condition \eqref{3deltat/deltax}, let $u^{P,\varepsilon}_M$ be the corresponding solution of the scheme~\eqref{3sch:IC}--\eqref{3sch:eq}. Then this solution satisfies the following properties:
\begin{equation}\label{3first_L_infty_estimate}
\|u^{P,\varepsilon}_M\|_{L^\infty(I_P\times(0,T))}
\le \|u^P_0\|_{L^\infty(I_P)},
\end{equation}
\begin{equation}\label{3bounded ux}
\|\partial_x u^{P,\varepsilon}_M\|_{L^\infty(0,T;L^1(I_P))}
\le \|\partial_x u_0\|_{L^1(\R)},
\end{equation}
\begin{equation}\label{3time estimate of u}
\|\partial_t u^{P,\varepsilon}_M\|_{L^\infty(0,T;L^1(I_P))}
\le C\, \|\partial_x u_0\|_{L^1(\R)},
\end{equation}
\begin{equation}\label{3bounded in LlogL}
\|\partial_x u^{P,\varepsilon}_M\|_{L^\infty(0,T;L\log L(I_P))} \leq C,
\end{equation}
where $C >0$ is independent of $\varepsilon, P$, and $M$. Moreover, let $\varepsilon_m=(\Delta x_m, \Delta t_m) \to (0,0)$ as $m \to +\infty$, with each $\Delta x_m$ and $\Delta t_m$ satisfying the CFL condition~\eqref{3deltat/deltax}. Then, there exists a function $u^P_M$ such that, up to a subsequence,
\[
u^{P,\eps_m}_M \longrightarrow u^P_M \quad \text{locally uniformly on } I_P \times [0,T].
\]
Furthermore, $u^P_M$ is a viscosity solution of \eqref{31.equPaux} (see Definition~\ref{3def:discont_visc_fixed}), with the following regularity properties:
\begin{equation}\label{3limit-regularity}
\begin{array}{cc}
u^P_M \in L^\infty(I_P\times(0,T)) \cap C([0,T]; L^1(I_P)),
&
\partial_x u^P_M \in L^\infty(0,T; L\log L(I_P)), \\[6pt]
\multicolumn{2}{c}{
\partial_t u^P_M \in L^\infty(0,T; L^1(I_P)).
}
\end{array}
\end{equation}
\end{theorem}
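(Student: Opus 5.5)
The plan is to transfer the discrete estimates of Theorem~\ref{3theorem-properties} to the $Q_1$-extension \eqref{3defQ1ext}, extract a uniformly convergent subsequence, and then identify the limit as a viscosity solution by the half-relaxed limits method of Barles--Souganidis.

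\textbf{Step 1: uniform bounds.} On each cell $[x_i,x_{i+1}]\times[t_n,t_{n+1}]$ the $Q_1$-extension is a convex combination of the four nodal values. Hence \eqref{3ineq:Linfty} gives the sup bound \eqref{3first_L_infty_estimate}. The spatial derivative on a cell is a convex combination in $t$ of $\theta^{P,n}_{i+1/2}$ and $\theta^{P,n+1}_{i+1/2}$. Integrating in $x$ and using \eqref{3TV:ineq} gives \eqref{3bounded ux}. Since $f$ is convex, Jensen's inequality together with \eqref{3gradient_entropy_estimate} bounds $\int_{I_P} f(|\pa_x u^{P,\eps}_M|)\,dx$. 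Lemma~\ref{3main_2_lem.init.perio.to.nonperiod} controls the right-hand side uniformly in $P$ (with $P\ge 1$), and a bound on $\int f(|g|)$ plus $\|g\|_{L^1}$ controls the $L\log L$ Luxemburg norm, which gives \eqref{3bounded in LlogL}.

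For the time derivative, rewrite the scheme \eqref{3sch:eq} as
\[
u^{n+1}_i-u^n_i=\tfrac12(u^n_{i+1}-u^n_i)+\Delta t\,\lambda_i|\theta^n_{i+1/2}|.
\]
Using $|\lambda_i|\le \|a\|_\infty+\|\sigma^P_M\|_{L^1}\max|u^{n+1}|$, the kernel bound $\|\sigma^P_M\|_{L^1(I_P)}\lesssim\|\K\|_{L^1}$ (Fejér kernels have unit mass), and the CFL condition \eqref{3deltat/deltax}, we get $\sum_i\Delta x|u^{n+1}_i-u^n_i|/\Delta t\le C\sum_i|u^n_{i+1}-u^n_i|$. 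This yields \eqref{3time estimate of u}.

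\textbf{Step 2: compactness.} The bounds in $W^{1,1}$ give compactness only in $C([0,T];L^1)$ via Aubin--Simon or Kolmogorov, not uniform convergence. For uniform convergence I will use the $L\log L$ bound. In one dimension, a bound on $\int f(|\pa_x u|)$ gives a uniform modulus of continuity in $x$. For an interval $J$ of length $h$, splitting on where $|\pa_x u|\ge K$ gives
\[
\int_J|\pa_x u|\le Kh+\frac{C}{\ln K},
\]
and choosing $K=h^{-1/2}$ yields a modulus $\omega(h)\sim 1/|\ln h|$. For equicontinuity in time, combine the $L^1$ time bound with the space modulus by the standard interpolation: the average over a ball of radius $r$ moves by at most $C|t-s|/r$, while pointwise deviation from the average is at most $\omega(r)$. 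Optimizing in $r$ gives a time modulus. The Arzelà--Ascoli theorem then provides a locally uniformly convergent subsequence, and the regularity \eqref{3limit-regularity} follows by lower semicontinuity and weak-$\ast$ limits.

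\textbf{Step 3: identification as a viscosity solution.} This is the main obstacle. The scheme is monotone under the CFL condition, because the explicit part $\tfrac12(u_{i+1}+u_i)+\Delta t\,\lambda|\theta_{i+1/2}|$ is nondecreasing in $u_i$ and $u_{i+1}$ when $\Delta t|\lambda|/\Delta x\le 1/2$. The nonlocal velocity, however, breaks monotonicity with respect to the whole profile, and there is no comparison principle.

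My approach freezes the velocity. Since $u^{\eps}\to u$ uniformly and $\sigma^P_M$ is a trigonometric polynomial, hence smooth, the discrete convolution $\lambda_i[u^{n+1}]$ converges uniformly to
\[
c(x,t)=(\sigma^P_M\star u)(x)+a(t).
\]
This limit is continuous in $t$ by the $C([0,T];L^1)$ continuity of $u$, and Lipschitz in $x$. I would then treat the scheme as a monotone, consistent, stable scheme for the local equation $u_t=c(x,t)|u_x|$ with a perturbation $o(1)$ in the velocity.

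To verify the subsolution property, take a test function $\phi$ touching $u$ from above at a strict maximum. Pick discrete maximizers of $u^{\eps}-\phi$, which converge to the touching point by uniform convergence. Monotonicity of the explicit step then gives
\[
\phi(x_i,t_{n+1})\le \tfrac12(\phi_{i+1}+\phi_i)^n+\Delta t\,\lambda_i|\Delta_x\phi|+o(\Delta t).
\]
Consistency of the upwind-averaged operator still needs care: the Lax--Friedrichs-type average $\tfrac12(u_{i+1}+u_i)$ introduces a term $\tfrac{\Delta x}{2}\phi_x$, which is $O(\Delta x)$ relative to $\Delta t$ and not $o(\Delta t)$. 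I would handle this by recognising that the grid for $u^{n+1}$ is effectively shifted by $\Delta x/2$, that is, $u^{n+1}_i$ approximates $u$ at $x_{i+1/2}$ modulo the staggering. Alternatively I would check that this term cancels when evaluated at the staggered point. Since the limit is already known to be uniform, any consistent interpretation suffices, and dividing by $\Delta t$ and passing to the limit gives $\phi_t\le c|\phi_x|$.
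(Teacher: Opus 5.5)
Your Steps 1 and 2 are fine. Step 2, with the explicit $1/|\ln h|$ modulus from the entropy bound and the averaging trick in time, is a hands-on substitute for the paper's Simon-lemma/compact-embedding argument. Step 3, however, breaks down exactly where you flagged trouble, and your remedy does not repair it. The extra term is real: for $\varphi\in C^1$,
\[
\frac{\varphi(x_i,t_{n+1})-\frac12\big(\varphi(x_{i+1},t_n)+\varphi(x_i,t_n)\big)}{\Delta t}=\pa_t\varphi(x_i,t_{n+1})-\frac{\Delta x}{2\Delta t}\,\pa_x\varphi(x_i,t_n)+o(1).
\]
Since \eqref{3deltat/deltax} fixes $R:=\Delta x/\Delta t=4\big(10\|\K\|_{L^1(\R)}\|u_0\|_{L^\infty(\R)}+\|a\|_{L^\infty(0,T)}\big)$, this term does not vanish. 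The paper's consistency step asserts that this quotient equals $\pa_t\varphi+o(1)$, i.e.\ it drops precisely this term.

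Reinterpreting the grid as shifted by $\Delta x/2$ does not help, because the shift is incurred at every step. The value $u^{P,n}_i$ is a consistent approximation at $x_i+n\Delta x/2=x_i+t_nR/2$, a displacement of order one as $\eps\to0$. Hence ``any consistent interpretation suffices'' is false, because the statement is about the fixed-grid $Q_1$-extension \eqref{3defQ1ext}. For that function, your own monotonicity argument, carried out without the hand-wave, gives $\pa_t\varphi-\frac R2\pa_x\varphi\le c\,|\pa_x\varphi|$ at maxima (and the reverse at minima). So the limit solves $w_t=\frac R2 w_x+\big[\sigma^P_M\star w+a\big]|w_x|$, i.e.\ $w(x,t)=v(x+tR/2,t)$ with $v$ a solution of \eqref{31.equPaux}. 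Concretely, take $\K\equiv0$ and $a\equiv a_0>0$. Then $R/2=2a_0$, and the scheme is local, monotone and consistent with $w_t=2a_0w_x+a_0|w_x|$. Its limit is $\max_{[x+a_0t,\,x+3a_0t]}u^P_0$, not the solution $\max_{[x-a_0t,\,x+a_0t]}u^P_0$ of $u_t=a_0|u_x|$.

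What your staggering idea does prove, once made precise, is convergence of the moving-grid interpolant $\tilde u^{\eps}(x,t):=u^{P,\eps}_M(x-tR/2,t)$, which places $u^{P,n}_i$ at $x_i+n\Delta x/2$. For this interpolant:
\begin{itemize}
\item the average $\frac12(u^{P,n}_{i+1}+u^{P,n}_i)$ and the difference quotient $\theta^{P,n}_{i+1/2}$ are centred at the updated node, so consistency holds;
\item the bounds of Steps 1--2 persist, since the time derivative only gains $\frac R2|\pa_x u^{P,\eps}_M|$, which is still bounded in $L^1$;
\item $\lambda_i[u^{P,n+1}]$ converges to $(\sigma^P_M\star\tilde u)+a$ by translation invariance.
\end{itemize}
So either the statement must be reformulated for $\tilde u^\eps$, or the scheme must be changed, e.g.\ to a genuinely staggered grid alternating between integer and half-integer nodes. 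The first option costs nothing: all discrete estimates of Theorem~\ref{3theorem-properties} are position-independent and carry over verbatim.

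Finally, freezing the velocity yields $(\sigma^P_M\star u^P_M(\cdot,t_0))(x_0)+a(t_0)$ in the test inequalities. Definition~\ref{3def:discont_visc_fixed} is instead written with $\sigma^P_M\star\varphi(\cdot,t_0)$. Since $\sigma^P_M$ has no sign and $\varphi$ dominates $u^P_M$ only locally, the two are not interchangeable. You should say which notion you establish; yours is the standard one.
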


The proof of Theorem~\ref{3theorem-convergence} is carried out in
Section~\ref{3Convergence of the scheme}.
We proceed in two main steps. First, using the uniform estimates
\eqref{3first_L_infty_estimate}--\eqref{3bounded in LlogL},
we establish compactness of the family
$(u^{P,\varepsilon}_M)_\varepsilon$
in $C([0,T];L^1(I_P))\cap L^{\infty}(I_P \times(0, T))$. Second, using the monotonicity and consistency of the numerical scheme,
together with the stability theory for viscosity solutions,
we prove that the limit function $u^P_M$ obtained in the first step is a viscosity solution of
\eqref{31.equPaux}.

\begin{theorem}[Limit $M \to +\infty$ and $P \to +\infty$]\label{3theorem-limit-PM}
Let $u^P_M$ be a viscosity solution of~\eqref{31.equPaux} obtained in Theorem~\ref{3theorem-convergence}. Then, as $M \to +\infty$, there exists a subsequence (still denoted $u^P_M$), and a function $u^P$ such that
\[
u^P_M \longrightarrow u^P \quad \text{locally uniformly on } I_P \times [0,T].
\]
Moreover, $u^P$ is a viscosity solution of~\eqref{32.periodic}
and satisfies the uniform bounds
\[
u^P \in L^\infty(I_P\times[0,T]), 
\quad \partial_x u^P \in L^\infty(0,T;L\log L(I_P)), 
\quad \partial_t u^P \in L^\infty(0,T;L^1(I_P)).
\]
Furthermore, as $P \to + \infty$, there exists a subsequence (still denoted $u^P$) and a function $u$ such that
\[
u^P \longrightarrow u \quad \text{locally uniformly on } \mathbb{R} \times [0,T].
\]
Finally, $u$ is a viscosity solution of the problem \eqref{3eq:eikonal}, with the following regularities
\[
u \in L^\infty(\mathbb{R}\times[0,T]), 
\quad \partial_x u \in L^\infty(0,T;L\log L_{\rm loc}(\mathbb{R})), 
\quad \partial_t u \in L^\infty(0,T;L^1_{\rm loc}(\mathbb{R})).
\]
\end{theorem}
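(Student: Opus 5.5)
The plan is to run, at the continuous level, the same two-step strategy used for Theorem~\ref{3theorem-convergence}: first obtain compactness from uniform estimates, then pass to the limit in the viscosity inequalities using stability of viscosity solutions.

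\textbf{Uniform estimates in $M$.} The bounds \eqref{3first_L_infty_estimate}--\eqref{3bounded in LlogL} hold with constants independent of $\varepsilon$, $P$ and $M$. By lower semicontinuity of the $L^1$ and Orlicz norms under a.e. or locally uniform convergence, the limits $u^P_M$ inherit them uniformly in $M$. That is, $\|u^P_M\|_{L^\infty}\le 2\|u_0\|_{L^\infty(\R)}$, $\sup_t\|\partial_x u^P_M\|_{L\log L(I_P)}\le C$, and $\sup_t\|\partial_t u^P_M\|_{L^1(I_P)}\le C$.

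\textbf{Compactness for $M\to+\infty$.} A uniform $L\log L$ bound on $\partial_x u$ gives a uniform modulus of continuity in space. Indeed, by the de la Vallée-Poussin criterion, $\partial_x u^P_M(\cdot,t)$ is uniformly integrable, so
\[
|u(x,t)-u(y,t)|\le \int_x^y|\partial_x u|\le \omega(|x-y|),
\]
with $\omega(r)\lesssim 1/\ln(1/r)$. For time equicontinuity I would first use the $L^1$ bound on $\partial_t u$ to get $\|u(\cdot,t)-u(\cdot,s)\|_{L^1}\le C|t-s|$. Combined with the uniform spatial modulus, this gives a pointwise estimate: average over an interval of length $r$ and optimize,
\[
|u(x,t)-u(x,s)|\le 2\omega(r)+C|t-s|/r .
\]
Arzel\`a--Ascoli then yields a subsequence converging uniformly on the compact set $I_P\times[0,T]$ to some $u^P$, which satisfies the same bounds.

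\textbf{Identifying the limit equation.} The key point is that the nonlocal velocity converges uniformly: $\sigma^P_M\star u^P_M+a\to \K^P\star u^P+a$ uniformly. I would write
\[
\sigma^P_M\star u^P_M-\K^P\star u^P=\sigma^P_M\star(u^P_M-u^P)+(\sigma^P_M-\K^P)\star u^P .
\]
For the first term, $\|\sigma^P_M\|_{L^1(I_P)}\le \|\K^P_M\|_{L^1}\le \|\K\|_{L^1}+C$, since the Fejér kernel has $L^1$ mass $2P$ with the normalization in \eqref{3cesaro} and the correction in \eqref{31.defKP} has bounded mass. This term is therefore bounded by $C\|u^P_M-u^P\|_\infty\to0$. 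For the second term, $\sigma_M(\K^P_M)-\K^P = (\sigma_M(\K^P)-\K^P) - \frac{2}{P}\sigma_M(F_{2M})\int_{|x|\ge P}|\K|$. Fejér's theorem gives $\sigma_M(\K^P)\to\K^P$ in $L^1(I_P)$. The remaining part tends to a Dirac-type mass of size $\frac{2}{P}\int_{|x|\ge P}|\K|\cdot 2P$, which does not vanish. I would therefore expect the limit velocity to carry an extra term $-c_P\, u^P(x,t)$ with $c_P=4\int_{|x|\ge P}|\K|$, unless this mass is accounted for in the definition of \eqref{32.periodic}. I would track this term explicitly, and it vanishes as $P\to\infty$, which is harmless for the final statement. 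With the velocity $V_M\to V$ uniformly and continuous, the standard stability of viscosity solutions for $\partial_t u=V(x,t)|\partial_x u|$ with frozen continuous velocity applies: test functions touching $u^P$ at a strict extremum are perturbed to touch $u^P_M$ at nearby points, and one passes to the limit in $\partial_t\phi\le V_M|\partial_x\phi|$.

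\textbf{Limit $P\to+\infty$.} The bounds are uniform in $P\ge1$ by Lemma~\ref{3main_2_lem.init.perio.to.nonperiod} and the $P$-independence of the constants. Arzel\`a--Ascoli on compacts $[-R,R]\times[0,T]$ combined with a diagonal argument gives a locally uniform limit $u$, with $L\log L_{\rm loc}$ and $L^1_{\rm loc}$ bounds. For the velocity, on $[-R,R]$ I would show $\K^P\star u^P\to\K\star u$ locally uniformly. The convolution splits into $|z|\le P$, where $\K^P=\K$, and the tail, which is bounded by $\|u^P\|_\infty\int_{|z|\ge P}|\K|\to0$ using the uniform $L^\infty$ bound. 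The part $\int_{|z|\le R'}\K(z)(u^P-u)(x-z)$ tends to zero by local uniform convergence, while $\int_{|z|>R'}$ is small uniformly. The initial condition also converges: $u^P_0\to u_0$ locally uniformly, since $L^P\to0$ and $L^Px$ is small on compacts. Stability again gives that $u$ is a viscosity solution.

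\textbf{Main obstacle.} I expect the hardest step to be the time equicontinuity: only an $L^1$ bound on $\partial_t u$ is available, and the interpolation argument above must be made uniform. A closely related difficulty is the bookkeeping of the $F_{2M}$ correction term in $\K^P_M$, which may produce the extra local term described above.
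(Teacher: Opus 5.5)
Your architecture (uniform bounds, compactness, stability of viscosity solutions, first in $M$ and then in $P$) is the paper's, with two differences in execution. For compactness you build equicontinuity by hand. You get a spatial modulus $\omega(r)\le C/\ln(1/r)$ from the uniform $L\log L$ bound, and time continuity by averaging the $L^1$-Lipschitz bound over intervals of length $r$; then you conclude with Arzel\`a--Ascoli. The paper instead feeds the same bounds into Simon's lemma with the compact embedding of Lemma~\ref{3compact}. Your route is self-contained and quantitative, and the ``main obstacle'' you flag is already settled by your interpolation inequality. For $P\to+\infty$ you also supply two things the paper leaves implicit: the tail estimate giving $\K^P\star u^P\to\K\star u$ locally uniformly, and the convergence $u^P_0\to u_0$ (from $|L^P|\le\|u_0\|_{L^\infty(\R)}/P$).

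The substantive point is the $F_{2M}$ correction, and there you are right and the paper is not. The paper asserts that, by Fej\'er's theorem, $\sigma^P_M\to\K^P$ in $L^1(I_P)$, and deduces $\sigma^P_M\star u^P_M\to\K^P\star u^P$. Fej\'er's theorem only controls $\frac{1}{2P}F_M\star\K^P$. By \eqref{31.defKP}--\eqref{3cesaro}, the remainder is $-\frac{2}{P}\bigl(\int_{|x|\ge P}|\K|\bigr)\frac{1}{2P}F_M\star F_{2M}$. This is a nonpositive kernel concentrating at the origin, with $M$-independent total mass $-c_P$, where $c_P=4\int_{|x|\ge P}|\K|$; this is also where the constant $5$ in \eqref{3normKPM} comes from. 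Hence $\|\sigma^P_M-\K^P\|_{L^1(I_P)}\to c_P$, and $\sigma^P_M\star w\to\K^P\star w-c_P\,w$ uniformly for every continuous $w$. The $M\to+\infty$ limit therefore solves $\partial_t u^P=[\K^P\star u^P-c_P u^P+a]\,|\partial_x u^P|$. Unless $\K=0$ a.e. outside $[-P,P]$, neither your argument nor the paper's shows that $u^P$ solves \eqref{32.periodic}. So your proposal does not deliver the intermediate assertion as stated, but that assertion needs to be corrected rather than proved. Your bookkeeping shows the final conclusion about $u$ and \eqref{3eq:eikonal} survives, since $c_P\|u^P\|_{L^\infty}\le 8\|u_0\|_{L^\infty(\R)}\int_{|x|\ge P}|\K|\to0$.

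Two smaller remarks. First, you freeze the velocity at $u$, while Definition~\ref{3def:discont_visc_fixed} places the test function inside the convolution. The same extra term appears in that formulation, since $\sigma^P_M\star\varphi\to\K^P\star\varphi-c_P\varphi$ uniformly for fixed continuous $\varphi$, so nothing changes. Second, lower semicontinuity alone only makes $\partial_t u^P$ a measure with mass $O(|t-s|)$ on $I_P\times(s,t)$. To keep $\partial_t u^P\in L^\infty(0,T;L^1(I_P))$ you need equi-integrability. This follows from the cellwise bound $|\partial_t u^{P,\eps}_M|\le C\bigl(|\theta^{P,n}_{i+1/2}|+|\theta^{P,n}_{i+3/2}|\bigr)$, where $C$ is uniform because the ratio $\Delta x/\Delta t$ is fixed by \eqref{3deltat/deltax}, together with \eqref{3gradient_entropy_estimate}.
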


The proof of Theorem~\ref{3theorem-limit-PM} is presented in Section~\ref{3Convergence of the scheme} and proceeds in two steps. In the first step, we let $M \to +\infty$. Using Fejér's theorem, we show that the modified kernel
$\sigma^P_M$ converge to $\mathcal K^P$ in $L^1(I_P)$.
The uniform bounds established in
Theorem~\ref{3theorem-convergence} yield compactness of the family
$(u^P_M)$.
The stability of viscosity solutions under locally uniform convergence
then implies that the limit $u^P$ solves the periodic problem
\eqref{32.periodic}. In the second step, we pass to the limit as $P \to +\infty$.
The uniform estimates inherited from the previous step ensure
compactness on compact subsets of $\mathbb R \times [0,T]$.
Applying again the stability theorem for viscosity solutions,
we conclude that the limit function $u$ is a viscosity solution of
\eqref{3eq:eikonal}.

\section{Study of the numerical scheme}\label{3proving_gradient_entropy}
In this section, we prove Theorem \ref{3theorem-properties}. To this end, we divide the proof into two main steps. In Section~\ref{3well-posedness of the scheme}, we establish the well-posedness of the scheme, while in Section~\ref{3subsec:gradent}, we prove the discrete gradient entropy inequality~\eqref{3gradient_entropy_estimate}.

\subsection{Well-posedness of the scheme}\label{3well-posedness of the scheme}Let us now prove the well-posedness of the scheme thanks to Banach's fixed point theorem. For this, we argue by induction. Thus, let $0\leq n \leq N_T-1$ be fixed, and let $u^{P,n} = (u^{P,n}_i)_{i\in\I}$ be a given vector such that 
\[
\max_{i\in\I}\, |u^{P,n}_i| \leq \alpha := 2\|u_0\|_{L^\infty(\R)},
\]
and
\begin{align*}
\operatorname{TV}\!\left(u^{P,n}\right) \leq \operatorname{TV}\!(u^{P, 0}), 
\quad \mbox{for } n \in \{1,\ldots, N_T\}.
\end{align*}
First, we introduce the following compact set:
\[
\mathcal{U}^\alpha := \{ v = (v_i)_{i \in \I} \mid v_i \in [-\alpha,\alpha] \},
\]
and, for any \( v \in \mathcal{U}^{\alpha} \) and for each \( i \in \I \), we define the mapping

\[
\begin{aligned}
F_i:\quad &\mathcal U^\alpha \longrightarrow \mathbb R,\\
          &v \longrightarrow F_i(v)
           := \frac{u_{i+1}^{P,n}+u_i^{P,n}}{2}
            + \frac{\Delta t}{\Delta x}\,
              \lambda_i[v]\,
              \bigl|u_{i+1}^{P,n}-u_i^{P,n}\bigr|.
\end{aligned}
\]
We notice that the solution, denoted $u^{P,n+1}$, of the scheme~\eqref{3sch:IC}--\eqref{3sch:eq} 
at step $n+1$  is given by
\begin{align*}
    u^{P,n+1}_i = F_i(u^{P,n+1}) \quad \forall i \in \I.
\end{align*}
We now aim to show that \( F = (F_i)_{i \in \I} : \mathcal{U}^\alpha \to \mathcal{U}^\alpha \) is a well-defined contraction mapping. For this purpose, let \( v = (v_i)_{i \in \I} \in \mathcal{U}^\alpha \). We observe that
\begin{align*}
    F_i(v) = \left( \frac{1}{2} + \dfrac{\Delta t}{\Delta x} \lambda_i[v] \, \text{sign}(u_{i+1}^{P, n} - u_i^{P, n}) \right) u_{i+1}^{P, n}
    + \left( \frac{1}{2} - \dfrac{\Delta t}{\Delta x} \lambda_i[v] \, \text{sign}(u_{i+1}^{P, n} - u_i^{P, n}) \right) u_i^{P, n}.
\end{align*}
To prove that $F_i(v)$ is a convex combination of \( u_i^{P, n} \) and \( u_{i+1}^{P, n} \), it remains to verify that both coefficients are non-negative and sum to 1. We have, by using \eqref{3Normcesaro}, that
\begin{align*}
   \left|\dfrac{\Delta t}{\Delta x}\lambda_i[v]\right| \leq \dfrac{\Delta t}{\Delta x}\left( 5 \alpha\|\K\|_{L^1(\R)} + \|a\|_{L^{\infty}(0, T)}  \right).
\end{align*}
Thanks to the CFL condition \eqref{3deltat/deltax}, we get $\left|\dfrac{\Delta t}{\Delta x} \lambda_i[v]\right| \le \frac{1}{2}$. Therefore, $F_i(v)$ is indeed a convex combination of $u_i^{P, n}$ and $u_{i+1}^{P, n}$ which are in $[-\alpha,\alpha]$, and hence $F(v) \in \mathcal{U}^\alpha$. Now, we equip \( \mathcal{U}^\alpha \) with the \( \ell^\infty \) norm:
\[
\|v\|_{\ell^{\infty}(I_P)} := \max_{i \in \I} |v_i|.
\]
For \( v, w \in \mathcal{U}^\alpha \), we compute:
\begin{align*}
    \|F(v) - F(w)\|_{\ell^{\infty}(I_P)} &= \max_{i \in \I} |F_i(v) - F_i(w)|\\
    &= \dfrac{\Delta t}{\Delta x} \max_{i \in \I}\left|u_{i + 1}^{P, n} - u_i^{P, n}\right| \left|\lambda_i[v] - \lambda_i[w]\right|\\
    &= \dfrac{\Delta t}{\Delta x} \max_{i \in \I}\left|u_{i + 1}^{P, n} - u_i^{P, n}\right| \left| \sum_{j \in \I} \Delta x \, \sigma_{M, j} \, (v_{i - j}-  w_{i - j})\right|\\
    &\le 10 \alpha \dfrac{\Delta t}{\Delta x} \|\K\|_{L^1(I_P)}\|v - w\|_{\ell^{\infty}(I_P)}.
\end{align*}
Thus, using \eqref{3deltat/deltax}, we deduce that $F$ is a contraction map on $\mathcal{U}^{\alpha}$. Therefore, by Banach fixed point theorem, we deduce that there exists a unique solution, denoted $u^{P,n+1}$ of \eqref{3sch:IC}-\eqref{3sch:eq} belonging to $\mathcal{U}^{\alpha}$. Moreover, we notice that
\begin{align*}
\theta^{P, n+1}_{i+1/2} = \left(\dfrac12 + \dfrac{\Delta t}{\Delta x} \, \lambda_{i+1}[u^{P, n + 1}]\, \mathrm{sign}\left(\theta^{P, n}_{i+3/2}\right)\right) \theta^{P, n}_{i+3/2} +\left(\dfrac12- \dfrac{\Delta t}{\Delta x} \, \lambda_i[u^{P, n + 1}] \,\mathrm{sign}\left(\theta^{P, n}_{i+1/2} \right) \right) \theta^{P, n}_{i+1/2}.
\end{align*}
Thanks to the CFL condition \eqref{3deltat/deltax}, we find that the coefficients are positive, and then
\begin{align*}
\left|\theta^{P, n+1}_{i+1/2}\right| \le \left(\dfrac12 + \dfrac{\Delta t}{\Delta x} \, \lambda_{i+1}[u^{P, n + 1}]\, \mathrm{sign}\left(\theta^{P, n}_{i+3/2}\right)\right) \left|\theta^{P, n}_{i+3/2}\right| +\left(\dfrac12- \dfrac{\Delta t}{\Delta x} \, \lambda_i[u^{P, n + 1}] \,\mathrm{sign}\left(\theta^{P, n}_{i+1/2} \right) \right) \left|\theta^{P, n}_{i+1/2}\right|.
\end{align*}
Now, by doing the summation over $i \in \I$, we get
\begin{multline*}
\sum_{i \in \I}\left|\theta^{P, n+1}_{i+1/2}\right| \le \sum_{i \in \I}\left(\dfrac12 + \dfrac{\Delta t}{\Delta x} \, \lambda_{i+1}[u^{P, n + 1}]\, \mathrm{sign}\left(\theta^{P, n}_{i+3/2}\right)\right) \left|\theta^{P, n}_{i+3/2}\right|\\ + \sum_{i \in \I}\left(\dfrac12- \dfrac{\Delta t}{\Delta x} \, \lambda_i[u^{P, n + 1}] \,\mathrm{sign}\left(\theta^{P, n}_{i+1/2} \right) \right) \left|\theta^{P, n}_{i+1/2}\right|.
\end{multline*}
Thanks to the periodicity that we have on $u^{P, n + 1}_i$ and $\theta^{P, n}_i$, we can then express the above inequality as
\begin{multline*}
\sum_{i \in \I}\left|\theta^{P, n+1}_{i+1/2}\right| \le \sum_{i \in \I}\left(\dfrac12 + \dfrac{\Delta t}{\Delta x} \, \lambda_{i}[u^{P, n + 1}]\, \mathrm{sign}\left(\theta^{P, n}_{i+1/2}\right)\right) \left|\theta^{P, n}_{i+1/2}\right|\\ + \sum_{i \in \I}\left(\dfrac12- \dfrac{\Delta t}{\Delta x} \, \lambda_i[u^{P, n + 1}] \,\mathrm{sign}\left(\theta^{P, n}_{i+1/2} \right) \right) \left|\theta^{P, n}_{i+1/2}\right|.
\end{multline*}
Therefore, we obtain
\begin{align*}
    \sum_{i \in \I}\left|\theta^{P, n+1}_{i+1/2}\right| \le \sum_{i \in \I}\left|\theta^{P, n}_{i+1/2}\right|,
\end{align*}
so that
\begin{align*}
    \operatorname{TV}(u^{P, n+1}) \leq \operatorname{TV}(u^{P, n}) \leq \operatorname{TV}(u^{P, 0}).
\end{align*}
It is important to note that, since $u^P_0 \in W^{1,1}(I_P)$, its total variation is equal to $\|\partial_x u^P_0\|_{L^1(I_P)}$.
Hence, we conclude by induction that the scheme~\eqref{3sch:IC}-\eqref{3sch:eq} admits a unique solution $u^{P,n}$ for any $n \in \{1,\ldots,N_T\}$ satisfying, thanks to Lemma~\ref{3main_2_lem.init.perio.to.nonperiod},
\[
\max_{i\in\I}\, |u^{P,n}_i| \leq \|u^P_0\|_{L^\infty(I_P)} \leq 2 \|u_0\|_{L^\infty(\R)},
\]
and
\begin{align*}
\operatorname{TV}\!\left(u^{P,n}\right) \leq \operatorname{TV}\!(u^{P, 0}) \le \|\partial_x u_0\|_{L^1(\R)}.
\end{align*}

\subsection{Discrete gradient entropy estimate}\label{3subsec:gradent}

In order to conclude the proof of Theorem~\ref{3theorem-properties}, it remains to establish the discrete gradient entropy estimate~\eqref{3gradient_entropy_estimate}. In particular, our analysis will rely on the three following lemmas.

\begin{lemma}[{\cite[Lemma 3.3]{MoMo14}}]
\label{3technical}
Let $\gamma_m > 1$ and $f$ the convex function given by~\eqref{3def:entropydensity}. Then, there exists a nonnegative function $g$ and a constant $C_{\gamma_m} > 0$ (only depending on $\gamma_m$) such that, for all $\theta > 0$ and $\gamma \in (0, \gamma_m)$, it holds
 \begin{equation*}%\label{3simple eq}
     f\bigg(\frac{\theta}{\gamma}\bigg) \geq \frac{1}{\gamma}f(\theta) - \frac{1}{\gamma}g(\theta,  \gamma)\ln(\gamma),
 \end{equation*}
 and $$|\theta - g(\theta, \gamma)| \leq C_{\gamma_m} = \frac{\gamma_m - 1}{e\ln(\gamma_m)}.$$
\end{lemma}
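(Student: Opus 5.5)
The plan is to define $g$ so that the inequality holds with equality, and then check the two required properties. For $\gamma\neq 1$ I set
\[
g(\theta,\gamma):=\frac{f(\theta)-\gamma f(\theta/\gamma)}{\ln\gamma},
\]
and for $\gamma=1$ I set $g(\theta,1):=\theta$, which is the limit as $\gamma\to1$. With this choice the inequality $f(\theta/\gamma)\ge \frac1\gamma f(\theta)-\frac1\gamma g(\theta,\gamma)\ln\gamma$ is an identity. What remains is to show (i) $g\ge 0$ and (ii) $|\theta-g(\theta,\gamma)|\le C_{\gamma_m}=\frac{\gamma_m-1}{e\ln\gamma_m}$.

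For (i), I use that $f$ is convex with $f(0)=0$. Hence $f(s x)\le s f(x)$ for $s\in[0,1]$. If $\gamma>1$, take $s=1/\gamma$: this gives $\gamma f(\theta/\gamma)\le f(\theta)$, so the numerator and $\ln\gamma$ are both nonnegative. If $\gamma<1$, apply the same inequality at the point $\theta/\gamma$ with $s=\gamma$: this gives $f(\theta)\le\gamma f(\theta/\gamma)$, so the numerator and $\ln\gamma$ are both nonpositive. In both cases $g\ge0$.

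For (ii), I introduce the auxiliary function $h(x)=x\ln x+\frac1e$. It is nonnegative, its minimum is $0$ at $x=1/e$, it satisfies $f\le h$, and $f=h$ on $[1/e,\infty)$. The key algebraic identity is
\[
\gamma h(\theta/\gamma)=h(\theta)-\theta\ln\gamma+\frac{\gamma-1}{e}.
\]
When both $\theta$ and $\theta/\gamma$ lie in $[1/e,\infty)$, this identity gives exactly
\[
\theta-g=\frac{\gamma-1}{e\ln\gamma}=:c(\gamma).
\]
The function $c$ is positive and increasing on $(0,\infty)$, so $c(\gamma)\le c(\gamma_m)=C_{\gamma_m}$ for $\gamma<\gamma_m$. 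I also note that $c(\gamma)\ge 1/e$ for $\gamma\ge1$, and in particular $C_{\gamma_m}\ge 1/e$. This will absorb the small-$\theta$ contributions.

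The main obstacle is the remaining cases, where $\theta<1/e$ or $\theta/\gamma<1/e$ and $f$ vanishes on one side. I would treat them by hand.

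If $\theta/\gamma<1/e$ and $\theta<1/e$, then $g=0$, so $|\theta-g|=\theta<1/e\le C_{\gamma_m}$.

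If $\theta/\gamma<1/e\le\theta$ (so $\gamma>1$), then $g=h(\theta)/\ln\gamma$. The identity, together with $0\le \gamma h(\theta/\gamma)\le \gamma/e$ (since $h\le 1/e$ on $[0,1/e]$), gives
\[
\theta-c(\gamma)\le g\le \theta-c(\gamma)+\frac{\gamma}{e\ln\gamma}.
\]
Combining the lower bound with $g\ge0$ from (i) should give $|\theta-g|\le c(\gamma)$. This is the point where I expect the argument to need care.

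If $\theta<1/e\le\theta/\gamma$ (so $\gamma<1$), then $g=-\gamma f(\theta/\gamma)/\ln\gamma$. Using $f(\theta)=0\le h(\theta)$ in the identity gives $\theta-g\le c(\gamma)$. In the other direction, $g\ge0$ gives $g-\theta\ge-\theta>-1/e$. The remaining bound, $g-\theta$ from above, should follow from $h(\theta)\le 1/e$, yielding $g-\theta\le\frac1e\cdot\frac{1}{|\ln\gamma|}-c(\gamma)$.

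If these intermediate estimates fail to close with the exact constant $C_{\gamma_m}$, my fallback is to modify $g$ in those regions so that it equals $\max(0,\theta-c(\gamma))$ or $0$. The inequality then becomes strict rather than an equality, and I would check the sign of $\ln\gamma$ in each case to confirm the modification is allowed.
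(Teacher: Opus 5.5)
The paper gives no proof of this lemma; it imports it from Monasse--Monneau. So I judge your argument on its own. Several parts are correct: your choice of $g$ (making the inequality an identity), the proof of $g\ge 0$ from convexity and $f(0)=0$, and the two cases where $\theta$ and $\theta/\gamma$ lie on the same side of $1/e$.

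The gap is exactly where you suspected: in the mixed cases you never bound $g-\theta$ from above.

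\textbf{First mixed case.} Take $\theta/\gamma<1/e\le\theta$, so $\gamma>1$. Your lower bound gives $\theta-g\le c(\gamma)$, but $g\ge 0$ says nothing about $g-\theta$. Your upper bound only gives $g-\theta\le \frac{1}{e\ln\gamma}$, which blows up as $\gamma\to1^+$. This regime really occurs: take $\theta\in[1/e,\gamma/e)$ with $\gamma$ close to $1$.

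\textbf{Second mixed case.} For $\theta<1/e\le\theta/\gamma$, your bound $\frac{1}{e|\ln\gamma|}-c(\gamma)=\frac{\gamma}{e|\ln\gamma|}$ is unbounded as $\gamma\to1^-$. In both cases the estimate $h\le 1/e$ on $[0,1/e]$ is too crude.

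\textbf{The fallback.} It does not repair the first case. The inequality is equivalent to $\tilde g\ln\gamma\ge f(\theta)-\gamma f(\theta/\gamma)=g\ln\gamma$, so for $\gamma>1$ you may only \emph{increase} $g$. But there $\max(0,\theta-c(\gamma))\le g$, strictly as soon as $\theta>1/e$, because $g-(\theta-c(\gamma))=\gamma h(\theta/\gamma)/\ln\gamma>0$. So your modification would break the inequality. For $\gamma<1$ decreasing $g$ is allowed, so the fallback does work in the second mixed case.

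The missing fact is that $g\le\theta$ always. The cleanest proof uses that $f$ is $C^1$ with $xf'(x)-f(x)=(x-1/e)_+$. Hence $\frac{d}{ds}\big(sf(\theta/s)\big)=-(\theta/s-1/e)_+$, and integrating from $1$ to $\gamma$ gives
\[
g(\theta,\gamma)=\frac{1}{\ln\gamma}\int_1^\gamma\Big(\frac{\theta}{s}-\frac1e\Big)_+\,ds,
\qquad
\theta-g(\theta,\gamma)=\frac{1}{\ln\gamma}\int_1^\gamma\min\Big(\frac{\theta}{s},\frac1e\Big)\,ds .
\]
From this, $g\ge0$ and $0\le\theta-g\le\frac{\gamma-1}{e\ln\gamma}\le C_{\gamma_m}$ for all $\gamma\in(0,\gamma_m)$, with no case analysis. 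The orientation of the integral takes care of $\gamma<1$.

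If you prefer to keep your cases, you can patch them directly:
\begin{itemize}
\item For $1/e\le\theta<\gamma/e$, the bound $g\le\theta$ is equivalent to $\theta\ln(\gamma/\theta)\ge 1/e$. This holds since $\ln(\gamma/\theta)>1$ and $\theta\ge 1/e$.
\item For $\gamma/e\le\theta<1/e$, it is equivalent (via your identity) to $h(\theta)\le(1-\gamma)/e$. This holds since $h$ is decreasing on $(0,1/e]$ and $h(\gamma/e)=(1-\gamma+\gamma\ln\gamma)/e\le(1-\gamma)/e$.
\end{itemize}
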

\begin{lemma}[{\cite[Lemma 3.4]{MoMo14}}] \label{3convexity inequality of f} 
Let $a_k$ and $\theta_k$ be two finite sequences of nonnegative real numbers such that $0< \sum_k a_k < 2$. Then, defining $\theta = \sum_k a_k \theta_k$, the following inequality holds
\[
f(\theta) \leq \sum_k a_k f(\theta_k) + g\left(\theta,\sum_k a_k \right) \, \ln\left(\sum_k a_k\right),
\]
where $f$ is given by~\eqref{3def:entropydensity} and $g$ is defined in Lemma \ref{3technical} for $\gamma_m = 2.$
\end{lemma}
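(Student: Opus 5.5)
The plan is to derive the inequality from Jensen's inequality for the convex function $f$, combined with the scaling inequality of Lemma~\ref{3technical} applied with $\gamma_m=2$.

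\emph{Step 1: convexity of $f$.} On $[1/e,\infty)$ we have $f''(x)=1/x>0$, and $f\equiv 0$ on $[0,1/e]$. At $x=1/e$ both branches agree: $\frac1e\ln\frac1e+\frac1e=0$. Their derivatives also agree, since $\ln(1/e)+1=0$. Hence $f$ is $C^1$, nondecreasing and convex on $[0,\infty)$.

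\emph{Step 2: normalization and Jensen.} Set $A:=\sum_k a_k\in(0,2)$ and $b_k:=a_k/A$, so that $b_k\ge 0$ and $\sum_k b_k=1$. Then $\theta/A=\sum_k b_k\theta_k$ is a convex combination of the $\theta_k$. Jensen's inequality gives
\[
f\Big(\frac{\theta}{A}\Big)\le \sum_k b_k f(\theta_k),
\qquad\text{that is,}\qquad
A\, f\Big(\frac{\theta}{A}\Big)\le \sum_k a_k f(\theta_k).
\]

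\emph{Step 3: undo the scaling.} Assume $\theta>0$. Apply Lemma~\ref{3technical} with $\gamma_m=2$ and $\gamma=A\in(0,2)$, then multiply by $A>0$:
\[
A\, f\Big(\frac{\theta}{A}\Big)\ge f(\theta)-g(\theta,A)\ln(A).
\]
Combining this with Step 2 yields $f(\theta)\le \sum_k a_k f(\theta_k)+g(\theta,A)\ln(A)$, which is the claim.

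\emph{Main obstacle: the degenerate case $\theta=0$.} Lemma~\ref{3technical} is stated only for $\theta>0$, so this case needs separate care. Here $f(\theta)=0$, and every $\theta_k$ with $a_k>0$ vanishes, so $\sum_k a_k f(\theta_k)=0$. The claim therefore reduces to $g(0,A)\ln A\ge 0$. This is delicate when $A<1$, because then $\ln A<0$.

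I would handle this by returning to the construction of $g$ in \cite{MoMo14}. There $g(\theta,\gamma)$ comes from the identity $f(\theta/\gamma)-\frac1\gamma f(\theta)=-\frac1\gamma g\ln\gamma$, with $g$ an intermediate value of $\theta\mapsto\theta\,\mathbf 1_{\{\theta\ge 1/e\}}$-type quantities between $\theta$ and $\theta/\gamma$. For $\theta=0$ this gives $g(0,\gamma)=0$, either directly or by extending $g$ by continuity. The inequality then holds with equality $0\le 0$.

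Alternatively, one can avoid the issue by noting that both sides are continuous in $(\theta_k)$. Letting $\theta_k\downarrow 0$ reduces the degenerate case to the positive one, provided $g$ is taken with $g(0,\cdot)=0$, which is consistent with the bound $|\theta-g|\le C_{2}$ of Lemma~\ref{3technical}.
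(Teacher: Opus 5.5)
Your argument is correct and is the standard proof from \cite{MoMo14}, which the paper cites without reproducing. You normalize by $A=\sum_k a_k$, use Jensen for the convex $f$ to get $A f(\theta/A)\le\sum_k a_k f(\theta_k)$, and undo the scaling with Lemma~\ref{3technical} at $\gamma=A\in(0,2)$. Your treatment of $\theta=0$ is also right: Lemma~\ref{3technical} as quoted only constrains $g$ for $\theta>0$, so you may set $g(0,\gamma)=0$ (the explicit choice $g(\theta,\gamma)=\bigl(f(\theta)-\gamma f(\theta/\gamma)\bigr)/\ln\gamma$, $\gamma\neq 1$, gives exactly this), whereas your limiting alternative would also need continuity of $g$ in $\theta$, which is not given.
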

\begin{lemma}[{\cite[Lemma 1]{ZHIZ25}}] \label{3fourier series of kpdelta negative} 
Let assumption~\textbf{\emph{(H2)}} holds. Then, the Fourier coefficients of the $2P$-periodic function $\K^P_M$ and $\sigma_M^P$ are nonpositive real numbers.
\end{lemma}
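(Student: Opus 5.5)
The plan is to compute all Fourier coefficients explicitly, relate them to the Fourier transform $\mathcal F(\K)$, and then use the sign condition \eqref{3fourier transform is negative} from \textbf{(H2)}, together with the Fejér correction in \eqref{31.defKP}.

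\emph{Reality.} Since $\K$ is even, $\K^P$ and $F_{2M}$ are even and $2P$-periodic. Hence $c_m(\K^P_M)=\frac1{2P}\int_{-P}^{P}\K^P_M(x)\cos(\pi m x/P)\,\dd x$ is real. The same holds for $\sigma^P_M=\frac1{2P}F_M\star\K^P_M$, which is even as a convolution of even functions.

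\emph{Coefficients of $\K^P$.} Write the truncated integral as the full integral minus the tail:
\[
c_m(\K^P)=\frac1{2P}\int_{\R}\K(x)e^{-i\pi mx/P}\,\dd x-\frac1{2P}\int_{|x|\ge P}\K(x)\cos(\pi mx/P)\,\dd x .
\]
The first term is $\frac1{2P}\mathcal F(\K)(\pi m/P)\le 0$ by \textbf{(H2)}. The second term is bounded in absolute value by $\frac1{2P}\int_{|x|\ge P}|\K|\,\dd x$. Therefore
\[
c_m(\K^P)\le \frac1{2P}\int_{|x|\ge P}|\K|\,\dd x=:\frac{\tau_P}{2P}.
\]

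\emph{Coefficients of the Fejér correction.} From $D_\ell=\sum_{|k|\le\ell}e^{i\pi kx/P}$ we get
\[
F_{2M}=\sum_{|k|<2M}\Bigl(1-\frac{|k|}{2M}\Bigr)e^{i\pi kx/P},
\qquad\text{so}\qquad
c_m(F_{2M})=\Bigl(1-\frac{|m|}{2M}\Bigr)_+ .
\]
Consequently
\[
c_m(\K^P_M)\le \frac{\tau_P}{2P}-\frac{2\tau_P}{P}\Bigl(1-\frac{|m|}{2M}\Bigr)_+ ,
\]
which is $\le 0$ as soon as $(1-|m|/(2M))_+\ge \frac14$, i.e. for $|m|\le \frac32 M$.

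\emph{Coefficients of $\sigma^P_M$.} By the convolution theorem for periodic functions and the normalisation in \eqref{3cesaro},
\[
c_m(\sigma^P_M)=\Bigl(1-\frac{|m|}{M}\Bigr)_+\,c_m(\K^P_M).
\]
This vanishes for $|m|\ge M$. For $|m|<M\le\frac32M$ it is a nonnegative multiple of a nonpositive number. So every Fourier coefficient of $\sigma^P_M$ is a nonpositive real number, which is what is used in the entropy estimate.

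\emph{Main obstacle.} The hard step is the remaining high frequencies $|m|>\frac32M$ for $\K^P_M$ itself. There the Fejér correction is too weak or zero, and $c_m(\K^P_M)$ is essentially $c_m(\K^P)$. The crude bound above then only gives $c_m(\K^P_M)\le \tau_P/(2P)$.

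What I would try first is to refine the sign of the tail term $-\int_{|x|\ge P}\K\cos(\pi mx/P)$ rather than bounding it by its absolute value. The route would be to rewrite $c_m(\K^P)$ via Poisson summation as an average of $\mathcal F(\K)$ over the lattice shifts $\pi(m+2kP\,\cdot)/P$. In that average every term is $\le 0$ by \textbf{(H2)}.

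If this representation cannot be justified for a merely $L^1$ kernel, the argument for $\K^P_M$ closes only on the band $|m|\le\frac32M$. In that case I would report that the statement as worded is established for $\sigma^P_M$ at every frequency but for $\K^P_M$ only on that band, rather than claim the full result for $\K^P_M$.
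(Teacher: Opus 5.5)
Your argument is correct. The paper gives no proof beyond the citation of \cite{ZHIZ25}, and your route is the one the construction is built for. First, the sign of $\mathcal F(\K)$ gives the tail bound $c_m(\K^P)\le \frac{1}{2P}\int_{|x|\ge P}|\K|\,\dd x$. Second, the Fej\'er correction, with $c_m(F_{2M})=(1-\frac{|m|}{2M})_+$, absorbs that tail on a low-frequency band. Third, the Ces\`aro multiplier $c_m(\sigma^P_M)=(1-\frac{|m|}{M})_+\,c_m(\K^P_M)$ removes every other frequency. This is also exactly what the paper uses later: in the entropy estimate, the DFT coefficients of $\bar\sigma^P_M$ are nonnegative combinations of the $c_m(\K^P_M)$ with $|m|<M$.

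Your ``main obstacle'' paragraph should be resolved the other way, though, and the Poisson summation plan fails. Poisson summation computes the coefficients of the periodization $\sum_{k\in\Z}\K(\cdot+2kP)$, and these are indeed $\frac1{2P}\mathcal F(\K)(\pi m/P)\le0$. But $\K^P$ is the periodic extension of the truncation $\K\mathbf{1}_{[-P,P]}$. Its transform is $\mathcal F(\K)$ convolved with the sign-changing kernel $\xi\mapsto 2\sin(P\xi)/\xi$ (up to normalization), so no sign survives.

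In fact the claim for $\K^P_M$ at high frequencies is false. For $|m|\ge 2M$ the correction vanishes, so $c_m(\K^P_M)=c_m(\K^P)$. Take the Peierls--Nabarro kernel of the numerical section, $\K(x)=\frac{x^2-1}{(x^2+1)^2}$. It satisfies \textbf{(H2)}, since $\int_\R\K(x)e^{-ix\xi}\,\dd x=-\pi|\xi|e^{-|\xi|}$. Set $\omega_m=\pi m/P$ and integrate by parts twice, using $\sin(\omega_m P)=0$, $\cos(\omega_m P)=(-1)^m$ and the oddness of $\K'$:
\[
2P\,c_m(\K^P)=\frac{2(-1)^m\K'(P)}{\omega_m^2}-\frac{1}{\omega_m^2}\int_{-P}^{P}\K''(x)\cos(\omega_m x)\,\dd x .
\]
The last integral tends to $0$ by the Riemann--Lebesgue lemma. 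Meanwhile $\K'(P)=\frac{2P(3-P^2)}{(P^2+1)^3}\neq0$ for $P\neq\sqrt3$. Hence, for all large $|m|$, $c_m(\K^P_M)$ has the sign of $(-1)^m\K'(P)$, so it is positive infinitely often.

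Your fallback formulation is therefore the correct statement: every coefficient of $\sigma^P_M$ is nonpositive, while those of $\K^P_M$ are nonpositive only for $|m|\le\frac32M$ (in particular for $|m|<M$). Your proof of that statement is complete.
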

Now, we are ready to prove estimate ~\eqref{3gradient_entropy_estimate}. For this purpose, we first notice that
\begin{align}\label{3eq:theta}
\theta^{P, n+1}_{i+1/2} = \dfrac{\theta^{P, n}_{i+3/2} +\theta^{P, n}_{i+1/2} }{2} + \dfrac{\Delta t}{\Delta x} \, \lambda_{i+1}[u^{P, n + 1}] \, \left|\theta^{P, n}_{i+3/2}\right|- \dfrac{\Delta t}{\Delta x} \, \lambda_i[u^{P, n + 1}]\, \left|\theta^{P, n}_{i+1/2} \right|,
\end{align}
or equivalently
\begin{multline*}
\theta^{P, n+1}_{i+1/2} = \left(\dfrac12 + \dfrac{\Delta t}{\Delta x} \, \lambda_{i+1}[u^{P, n + 1}]\, \mathrm{sign}\left(\theta^{P, n}_{i+3/2} \right)\right)\theta^{P, n}_{i+3/2}+\left(\dfrac12- \dfrac{\Delta t}{\Delta x} \, \lambda_i[u^{P, n + 1}] \,\mathrm{sign}\left(\theta^{P, n}_{i+1/2} \right) \right)\theta^{P, n}_{i+1/2}.
\end{multline*}
Thus, under the CFL condition~\eqref{3deltat/deltax}, the coefficients in the right hand side of the previous equality are nonnegative, and we get
\begin{align}\label{3absolute theta}
\left|\theta^{P, n+1}_{i+1/2}\right| = \dfrac{\left|\theta^{P, n}_{i+3/2}\right|+\left|\theta^{P, n}_{i+1/2}\right|}{2} + \dfrac{\Delta t}{\Delta x} \, \lambda_{i+1}[u^{P, n + 1}] \, \theta^{P, n}_{i+3/2} - \dfrac{\Delta t}{\Delta x} \, \lambda_i[u^{P, n + 1}] \, \theta^{P, n}_{i+1/2}.
\end{align}
Let us define the positive and negative parts of $\theta^{P, n+1}_{i+1/2}$, for all $0 \le n \le N_T$ and $i \in \I$, by 
\begin{align}\label{3def_of_theta_sign}
\theta^{+,P,n}_{i+1/2}
= \frac{\left|\theta^{P,n}_{i+1/2}\right| + \theta^{P,n}_{i+1/2}}{2},
\qquad
\theta^{-,P,n}_{i+1/2}
= \frac{\left|\theta^{P,n}_{i+1/2}\right| - \theta^{P,n}_{i+1/2}}{2}.
\end{align}
Hence, for $\theta^{+, P, n+1}_{i+1/2}$ and $\theta^{-,  P, n+1}_{i+1/2}$ defined in \eqref{3def_of_theta_sign}, we obtain, by adding \eqref{3eq:theta} and \eqref{3absolute theta},
\begin{align}\label{3ineq:theta+}
\theta^{+, P, n+1}_{i+1/2} \leq \widetilde{\theta}^{+, P, n+1}_{i+1/2} := \dfrac{\theta^{+,P, n}_{i+3/2}+\theta^{+,P,  n}_{i+1/2}}{2} + \dfrac{\Delta t}{\Delta x} \, \lambda_{i+1}[u^{P, n + 1}] \, \theta^{+, P, n}_{i+3/2} - \dfrac{\Delta t}{\Delta x} \, \lambda_i[u^{P, n + 1}] \, \theta^{+,P, n}_{i+1/2},
\end{align}
and, by subtracting \eqref{3eq:theta} from \eqref{3absolute theta},
\begin{align}\label{3ineq:theta-}
\theta^{-,  P, n+1}_{i+1/2} \leq \widetilde{\theta}^{-, P, n+1}_{i+1/2} := \dfrac{\theta^{-, P, n}_{i+3/2}+\theta^{-, P, n}_{i+1/2}}{2} - \dfrac{\Delta t}{\Delta x} \, \lambda_{i+1}[u^{P, n + 1}] \, \theta^{-, P, n}_{i+3/2} + \dfrac{\Delta t}{\Delta x} \, \lambda_i[u^{P, n + 1}] \, \theta^{-, P, n}_{i+1/2}.
\end{align}
Let us first work with~\eqref{3ineq:theta+}. To do so, we introduce the constants
\begin{align*}
a_1^+ := \dfrac12 +  \dfrac{\Delta t}{\Delta x} \, \lambda_{i+1}[u^{P, n + 1}], \quad a_2^+ := \dfrac12 - \dfrac{\Delta t}{\Delta x} \, \lambda_i[u^{P, n + 1}],
\end{align*}
so that
\begin{align*}
\widetilde{\theta}^{+, P, n+1}_{i+1/2} := a_1^+ \, \theta^{+, P,  n}_{i+3/2} + a_2^+ \, \theta^{+, P, n}_{i+1/2}.
\end{align*}
Moreover, we introduce $\mu^{+,n+1}_{i+1/2} = 1 - (a^+_1 + a^+_2)$. Then, thanks to ~\eqref{3deltat/deltax}, we have
\begin{align*}
1-\mu^{+,n+1}_{i+1/2} = a_1^+ + a_2^+ = 1 - \dfrac{\Delta t}{\Delta x} \left(\lambda_i[u^{P, n + 1}]-\lambda_{i+1}[u^{P, n + 1}] \right) \in (0,2).
\end{align*}
Therefore, according to Lemma~\ref{3convexity inequality of f}, we have
\begin{multline*}
f\left(\theta^{+,P, n+1}_{i+1/2}\right) \leq \dfrac{f\left(\theta^{+, P, n}_{i+3/2}\right)+f\left(\theta^{+, P, n}_{i+1/2}\right)}{2} + \dfrac{\Delta t}{\Delta x} \, \lambda_{i+1} [u^{P, n+1}]f\left(\theta^{+, P, n}_{i+3/2}\right) - \dfrac{\Delta t}{\Delta x} \, \lambda_i[u^{P, n+1}] \, f\left(\theta^{+, P, n}_{i+1/2}\right) \\
+ g\left(\widetilde{\theta}^{+, P, n+1}_{i+1/2},1-\mu^{+,n+1}_{i+1/2}\right) \, \ln\left(1-\mu^{+,n+1}_{i+1/2}\right). 
\end{multline*}
Now, we multiply the above inequality by $\Delta x$ and sum over all $i \in \I$. By reordering the terms and using the inequality $\ln(1-\mu) \le -\mu$ for all $\mu < 1$, we obtain
\begin{align*}
\sum_{i\in\I} \Delta x \, f\left(\theta^{+, P, n+1}_{i+1/2}\right) \leq \sum_{i \in \I} \Delta x  \, f\left(\theta^{+,P, n}_{i+1/2}\right)-\sum_{i \in \I} \Delta x\, g\left(\widetilde{\theta}^{+, P, n+1}_{i+1/2},1-\mu^{+,n+1}_{i+1/2}\right) \, \mu^{+,n+1}_{i+1/2}. 
\end{align*}
Moreover, we rewrite the last term in the right hand side of the previous inequality as 
\begin{multline*}
    -\sum_{i \in \I} \Delta x\, g\left(\widetilde{\theta}^{+, P, n+1}_{i+1/2},1-\mu^{+,n+1}_{i+1/2}\right) \, \mu^{+,n+1}_{i+1/2} \\=  -\sum_{i \in \I} \Delta x\, \left(g\left(\widetilde{\theta}^{+, P, n+1}_{i+1/2},1-\mu^{+,n+1}_{i+1/2}\right) - \widetilde{\theta}^{+, P, n+1}_{i+1/2}\right)\, \mu^{+,n+1}_{i+1/2} -\sum_{i \in \I} \Delta x\, \left(\widetilde{\theta}^{+,P, n+1}_{i+1/2}\right)\, \mu^{+,n+1}_{i+1/2}.
\end{multline*}
For the first term in the right hand side, thanks to Lemma \ref{3technical}, with $\gamma_m = 2$, we have 
\begin{align*}
   \Bigg|-\sum_{i \in \I} \Delta x\, \left(g\left(\widetilde{\theta}^{+, P, n+1}_{i+1/2},1-\mu^{+,n+1}_{i+1/2}\right) - \widetilde{\theta}^{+, P, n+1}_{i+1/2}\right)&\, \mu^{+,n+1}_{i+1/2}\Bigg| \le \frac{1}{e \ln(2)} \sum_{i \in \I} \Delta x \left|\mu^{+,n+1}_{i+1/2} \right| \\&= \frac{1}{e \ln(2)} \sum_{i \in \I} \Delta x \left|\dfrac{\Delta t}{\Delta x} \left(\lambda_i[u^{P, n + 1}]-\lambda_{i+1}[u^{P, n + 1}] \right)\right|\\
    &\le \dfrac{5 \Delta t \|\K\|_{L^1(\R)}}{e \ln(2)} \|\partial_x u_0\|_{L^1(\R)}, 
\end{align*}
where for the last inequality we have used~\eqref{3TV:ineq} and lemma~\ref{3lemma tech kernel} in Appendix~\ref{3app.techni}. Hence, we end up with
\begin{align}\label{3ineq:ent1}
\sum_{i\in\I} \Delta x \, f\left(\theta^{+, P, n+1}_{i+1/2}\right) \leq \sum_{i \in \I} \Delta x  \, f\left(\theta^{+,P, n}_{i+1/2}\right) + C \,\Delta t -\sum_{i \in \I} \Delta x\, \widetilde{\theta}^{+, P, n+1}_{i+1/2} \, \mu^{+,n+1}_{i+1/2},
\end{align}
where $C>0$ is a constant which depends on $\|\K\|_{L^1(\R)}$ and $\|\partial_x u_0\|_{L^1(\R)}$. Similarly, we rewrite the term $\widetilde{\theta}^{-, P, n+1}_{i+1/2}$ as
\begin{align*}
\widetilde{\theta}^{-, P, n+1}_{i+1/2} = a_1^- \, \theta^{+, P, n}_{i+3/2} + a_2^- \, \theta^{+, P, n}_{i+1/2},
\end{align*}
with
\begin{align*}
a_1^- := \dfrac12 -  \dfrac{\Delta t}{\Delta x} \, \lambda_{i+1}[u^{P, n + 1}], \quad a_2^- := \dfrac12 + \dfrac{\Delta t}{\Delta x} \, \lambda_i[u^{P, n + 1}],
\end{align*}
and we introduce the term $\mu^{-,n+1}_{i+1/2} = 1 - (a^-_1 + a^-_2)$, so that, under the CFL condition~\eqref{3deltat/deltax}, we get
\begin{align*}
1-\mu^{-,n+1}_{i+1/2} = a_1^- + a_2^- = 1 + \dfrac{\Delta t}{\Delta x} \left(\lambda_i[u^{P, n + 1}]-\lambda_{i+1}[u^{P, n+ 1}] \right) \in (0,2).
\end{align*}
Therefore, arguing as before, we get
\begin{align}
\sum_{i\in\I} \Delta x \, f\left(\theta^{-, P,n+1}_{i+1/2}\right) &\leq \sum_{i \in \I} \Delta x  \, f\left(\theta^{-, P, n}_{i+1/2}\right) + C\,\Delta t -\sum_{i \in \I} \Delta x\, \widetilde{\theta}^{-, P,n+1}_{i+1/2} \, \mu^{-,n+1}_{i+1/2}\nonumber\\
&\leq \sum_{i \in \I} \Delta x  \, f\left(\theta^{-, P, n}_{i+1/2}\right) + C\,\Delta t +\sum_{i \in \I} \Delta x\, \widetilde{\theta}^{-, P,n+1}_{i+1/2} \, \mu^{+,n+1}_{i+1/2},\label{3ineq:ent2}
\end{align}
where we have used the relation $\mu^{-,n+1}_{i+1/2} = - \mu^{+,n+1}_{i+1/2}$. Now, we sum estimates~\eqref{3ineq:ent1} and~\eqref{3ineq:ent2} and we obtain
\begin{multline*}
\sum_{i \in \I} \Delta x \left[f\left(\theta^{+, P, n+1}_{i+1/2}\right) + f\left(\theta^{-, P, n+1}_{i+1/2}\right) \right] \leq \sum_{i \in \I} \Delta x \left[f\left(\theta^{+, P, n}_{i+1/2}\right) + f\left(\theta^{-, P, n}_{i+1/2}\right) \right] \\
+ C\,\Delta t + \sum_{i \in \I} \Delta x\,\left( \widetilde{\theta}^{-, P, n+1}_{i+1/2}-\widetilde{\theta}^{+, P, n+1}_{i+1/2} \right) \, \mu^{+,n+1}_{i+1/2}.
\end{multline*}
It remains to notice that for every $i\in\I$ we have
\begin{align*}
\widetilde{\theta}^{-, P, n+1}_{i+1/2}-\widetilde{\theta}^{+, P, n+1}_{i+1/2} &= \dfrac{\theta^{-, P, n}_{i+3/2}+\theta^{-, P, n}_{i+1/2}}{2} - \dfrac{\Delta t}{\Delta x} \, \lambda_{i+1}[u^{P, n + 1}] \, \theta^{-, P, n}_{i+3/2} + \dfrac{\Delta t}{\Delta x} \, \lambda_i[u^{P, n + 1}] \, \theta^{-, P,n}_{i+1/2}\\
&- \left[\dfrac{\theta^{+, P, n}_{i+3/2}+\theta^{+, P, n}_{i+1/2}}{2} + \dfrac{\Delta t}{\Delta x} \, \lambda_{i+1}[u^{P, n + 1}] \, \theta^{+, P, n}_{i+3/2} - \dfrac{\Delta t}{\Delta x} \, \lambda_i[u^{P, n + 1}] \, \theta^{+, P, n}_{i+1/2} \right]\\
&=-\left[ \dfrac{\theta^{P, n}_{i+3/2}+\theta^{P, n}_{i+1/2}}{2} + \dfrac{\Delta t}{\Delta x} \, \lambda_{i+1}[u^{P, n + 1}] \, \left|\theta^{P, n}_{i+3/2}\right| - \dfrac{\Delta t}{\Delta x} \, \lambda_i[u^{P, n + 1}] \, \left|\theta^{P, n}_{i+1/2} \right| \right],
\end{align*}
so that
\begin{align*}
\widetilde{\theta}^{-, P, n+1}_{i+1/2}-\widetilde{\theta}^{+, P, n+1}_{i+1/2} = -\theta^{P, n+1}_{i+1},
\end{align*}
where we have used the relations
\begin{align*}
\theta^{P, n}_{i+1/2} = \theta^{+, P, n}_{i+1/2} - \theta^{-, P, n}_{i+1/2}, \quad \left|\theta^{P, n}_{i+1/2}\right| = \theta^{+, P, n}_{i+1/2} + \theta^{-, P, n}_{i+1/2} \quad \forall i \in \I,
\end{align*}
and the equality~\eqref{3eq:theta}. We conclude that it holds
\begin{multline*}
\sum_{i \in \I} \Delta x \left[f\left(\theta^{+, P, n+1}_{i+1/2}\right) + f\left(\theta^{-, P, n+1}_{i+1/2}\right) \right] \leq \sum_{i \in \I} \Delta x \left[f\left(\theta^{+,P, n}_{i+1/2}\right) + f\left(\theta^{-, P, n}_{i+1/2}\right) \right] \\
+ C\,\Delta t - \sum_{i \in \I} \Delta x \, \theta^{P, n+1}_{i+1/2} \, \mu^{+,n+1}_{i+1/2}.
\end{multline*}
Finally, applying the definition of $\mu^{+,n+1}_{i+1/2}$ and~\eqref{3def:lambda} we end up with
\begin{multline}\label{3ineg tech ent}
\sum_{i \in \I} \Delta x \left[f\left(\theta^{+, P, n+1}_{i+1/2}\right) + f\left(\theta^{-, P, n+1}_{i+1/2}\right) \right] \leq \sum_{i \in \I} \Delta x \left[f\left(\theta^{+, P, n}_{i+1/2}\right) + f\left(\theta^{-, P, n}_{i+1/2}\right) \right] \\
+ C\,\Delta t +\Delta t \sum_{i \in \I} \Delta x \, \theta^{P, n+1}_{i+1/2} \sum_{j\in\I} \Delta x \, \sigma_{M, j}^P \, \theta^{P, n+1}_{i-j+1/2}.
\end{multline}
We need to use the Discrete Fourier Transform (DFT). We first define the vector
\[
\theta^{P,n+1} = \left(\theta^{P,n+1}_{i+1/2}\right)_{i\in\I},
\]
and, for any vector $v=(v_i)_{i\in\I}$, we introduce the discrete counterpart of the periodic convolution product~\eqref{3conv.prod.perio}, still denoted $\star$, between $v$ and $\theta^{P,n+1}$ as
\begin{align*}
    \left(v \star \theta^{P,n+1}\right)_i := \sum_{j \in \I} \Delta x \, v_j \, \theta^{P,n+1}_{i-j+1/2} = \sum_{j \in \I}  v_j \, \left(u^{P,n+1}_{i+1-j}-u^{P,n+1}_{i-j}\right), \quad \forall i \in \I.
\end{align*}
Then, defining $ \bar{\sigma}_M^P= (\sigma_M^P(x_i))_{i\in\I}$, we notice that we can rewrite the last term in the right hand side of~\eqref{3ineg tech ent} as
\begin{align*}
\Delta t \sum_{i \in \I} \Delta x \, \theta^{P, n+1}_{i+1/2} \sum_{j\in\I} \Delta x \, \sigma_{M, j}^P \, \theta^{P, n+1}_{i-j+1/2} = \Delta t \sum_{i\in\I} \Delta x \, \theta^{P,n+1}_{i+1/2} \, \left( \bar{\sigma}_M^P \star \theta^{P,n+1}\right)_i. 
\end{align*}
In the sequel, for any $v = (v_i)_{i\in \I}$, we will denote by $c^d(v) = (c^d_j(v))_{j\in\I}$ the vector of the coefficients of the DFT of $v$ where
\begin{align*}
    c^d_j(v) := \frac{1}{2N} \sum_{\ell \in \I} v_\ell \, e^{-i\pi \, j \, \ell/N}, \quad \forall j \in \I.
\end{align*}
Therefore, applying the discrete version of Parseval's equality as well as the property of the DFT with respect to the discrete convolution product, we get
\begin{align*}
\Delta t \sum_{i \in \I} \Delta x \, \theta^{P, n+1}_{i+1/2} \sum_{j\in\I} \Delta x \, \sigma_{M, j}^P \, \theta^{P, n+1}_{i-j+1/2} = (2N)^2 \, \Delta t  \, \sum_{j \in \I} \Delta x \, c^d_j\left(\bar{\sigma}_M^P\right) \, \left|c^d_j\left(\theta^{P,n+1}\right)\right|^2.
\end{align*}
Let us now show that the coefficients of the DFT of the vector $\bar{\sigma}_M^P$ are real nonpositive numbers. For this purpose, let $j\in\I$ be fixed. Then, by definition, we have
\begin{align*}
    c^d_j\left(\bar{\sigma}_M^P\right) = \frac{1}{2N} \sum_{\ell \in \I} \sigma_M^P(x_\ell) \, e^{-i\pi \, j \, \ell/N} &= \frac{1}{2NM} \sum_{\ell\in\I} \sum_{k=0}^{M-1} \sum_{|m|\leq k} c_m(\K^P_M) \, e^{i\pi (m-j) \ell/N}\\
    &=\frac{1}{2NM} \sum_{k=0}^{M-1} \sum_{|m|\leq k} c_m(\K^P_M) \, \sum_{\ell\in\I} e^{i\pi (m-j) \ell/N}.
\end{align*}
Hence, since the last sum is either equal to $0$ or $2N$, depending on the values of $m$, we conclude that $c^d_j(\bar{\sigma}_M^P)$ is a sum of the Fourier coefficients $c_m(\K^P_M)$ for $|m|<M$. Therefore, thanks to Lemma~\ref{3fourier series of kpdelta negative}, we obtain
\begin{align}\label{3J3}
 \Delta t \sum_{i \in \I} \Delta x \, \theta^{P, n+1}_{i+1/2} \sum_{j\in\I} \Delta x \, \sigma_{M, j}^P \, \theta^{P, n+1}_{i-j+1/2}  \leq 0.
\end{align}
Hence, 
\begin{align*}
\sum_{i \in \I} \Delta x \left[f\left(\theta^{+, P, n+1}_{i+1/2}\right) + f\left(\theta^{-, P, n+1}_{i+1/2}\right) \right] \leq \sum_{i \in \I} \Delta x \left[f\left(\theta^{+, P, n}_{i+1/2}\right) + f\left(\theta^{-, P, n}_{i+1/2}\right) \right]
+ C\,\Delta t.
\end{align*}
Therefore, for any $0\leq n \leq N_T-1$, we obtain
\begin{align}\label{3ineq:disent}
\sum_{i \in \I} \Delta x \left[f\left(\theta^{+, P, n+1}_{i+1/2}\right) + f\left(\theta^{-, P, n+1}_{i+1/2}\right) \right] \leq \sum_{i \in \I} \Delta x \left[f\left(\theta^{+, P, 0}_{i+1/2}\right) + f\left(\theta^{-, P, 0}_{i+1/2}\right) \right] + C\,T.
\end{align}
Now, let $\theta \in \R$ be fixed. Then, we have $|\theta| = \theta^+ + \theta^-$ with
\begin{align*}
\theta^+ = \dfrac{|\theta|+\theta}{2}, \quad \theta^- = \dfrac{|\theta|-\theta}{2}.
\end{align*}
Adapting slightly the proofs of Lemma~\ref{3technical} and Lemma~\ref{3convexity inequality of f} (see also~\cite{MoMo14}), with $\gamma=2$ and for instance $\gamma_m = 3$, we obtain
\begin{align*}
f(|\theta|) &\leq f(\theta^+ + \theta^-) \leq f(\theta^+) + f(\theta^-) + g(|\theta|,2) \, \ln(2)\\
&\leq f(\theta^+) + f(\theta^-) + \left|g(|\theta|,2)-|\theta|\right| \, \ln(2) + |\theta| \, \ln(2)\\
&\leq f(\theta^+) + f(\theta^-) + \dfrac{2 \, \ln(2)}{e \, \ln(3)} + |\theta| \, \ln(2).
\end{align*}
Applying this inequality to $|\theta^{P, n+1}_{i+1/2}| = \theta^{+, P, n+1}_{i+1/2}+\theta^{-, P, n+1}_{i+1/2}$, we get thanks to~\eqref{3ineq:disent}, for any $0 \leq n \leq N_T-1$, the following estimate
\begin{align*}
\sum_{i \in \I} \Delta x \, f\left(\left|\theta^{P, n+1}_{i+1/2}\right|\right)  \leq \ln(2)\sum_{i \in \I} \Delta x \left|\theta^{P, 0}_{i+1/2}\right| + 2 e^{-1}+ \sum_{i \in \I} \Delta x \left[f\left(\theta^{+, P, 0}_{i+1/2}\right) + f\left(\theta^{-, P, 0}_{i+1/2}\right) \right]+ C\, T ,
\end{align*}
thus, we obtain 
\begin{align*}
\sum_{i \in \I} \Delta x \, f\left(\left|\theta^{P, n+1}_{i+1/2}\right|\right)  \leq \ln(2)\,\|\partial_x u_0\|_{L^1(\R)} + 2 e^{-1} + 2 \sum_{i \in \I} \Delta f\left(\left|\theta^{P, 0}_{i+1/2}\right|\right) + C\, T,
\end{align*}
which is precisely (\ref{3gradient_entropy_estimate}).

\section{Proofs of the convergence results}\label{3Convergence of the scheme}

In this section, we prove Theorems~\ref{3theorem-convergence} and \ref{3theorem-limit-PM}. We begin with the proof of Theorem~\ref{3theorem-convergence}. For this purpose, we split our proof in several steps. First, in Section \ref{3proof of theorem 2-i} we establish some uniform in $\eps=(\Delta x, \Delta t)$ estimates, whereas in Section \ref{3discontinuous viscosity solution} we show the existence of a function $u^P_M$ such that, up to a subsequence, the sequence $(u^{P,\varepsilon_m}_M)$ converges toward $u^P_M$ (in the sense specified in the statement of Theorem \ref{3theorem-convergence}). Then, we identify in Section \ref{3viscosity solution} this function as a solution of~\eqref{31.equPaux} in the viscosity sense. Finally, in Section~\ref{3existence of viscosity solution}, we prove the convergence toward the original model~\ref{3eq:eikonal}. Our convergence proof is similar to the one of \cite{ZHIZ25, alzohbi_elhajj_jazar_2022} and relies on Lemma \ref{3Llog L estimate} stated in Appendix \ref{3app.techni}.

\subsection{Uniform estimates} \label{3proof of theorem 2-i} We notice that
\begin{multline*}
    \|u^{P, \varepsilon}_M\|_{L^{\infty}((0, T) \times I_P)} \le \left( \frac{t - t_n}{\Delta t} \right)
\left[
\left( \frac{x - x_i}{\Delta x} \right) \|u^{P}_0\|_{L^{\infty}(I_P)}
+ \left( 1 - \frac{x - x_i}{\Delta x} \right) \|u^{P}_0\|_{L^{\infty}(I_P)}
\right]  \\
\quad +
\left( 1 - \frac{t - t_n}{\Delta t} \right)
\left[
\left( \frac{x - x_i}{\Delta x} \right) \|u^{P}_0\|_{L^{\infty}(I_P)}
+ \left( 1 - \frac{x - x_i}{\Delta x} \right) \|u^{P}_0\|_{L^{\infty}(I_P)}
\right]. 
\end{multline*}
Therefore, thanks to Theorem~\ref{3theorem-properties}, we get
$$\|u^{P, \varepsilon}_M\|_{L^{\infty}((0, T) \times I_P)} \le \|u^{P}_0\|_{L^{\infty}(I_P)}  \le 2\|u_0\|_{L^{\infty}(\R)}.$$
Now we want to prove inequality \eqref{3bounded ux}. Thanks to the definition~\eqref{3defQ1ext} of $u^{P,\eps}_M$, we observe that for any $(x,t) \in (x_i,x_{i+1}) \times [t_n , t_{n+1}]$ it holds
\begin{equation}\label{3uepsilonx}
\partial_x u^{P, \eps}_{M} (x,t) = \bigg(\frac{t - t_n}{\Delta t}\bigg) \,\theta_{i +1/2}^{P, n + 1} + \bigg(1 - \frac{t - t_n}{\Delta t}\bigg) \, \theta_{i +1/2}^{P, n }.
\end{equation}
Therefore, we obtain for a given $t \in [t_n, t_{n + 1}]$ (for some $0\leq n \leq N_T-1)$
\begin{align*}
\int_{I_P} \left|\partial_x u^{P, \eps}_{M}(x,t)\right|\, \dd x &= \sum_{i \in \I} \int_{x_i}^{x_{i + 1}} \left|\partial_x u^{P,\eps}_M(x,t)\right|\, \dd x\\ 
&\leq \sum_{i\in \I}  \Delta x\, \bigg(\frac{t - t_n}{\Delta t}\bigg)\, \left|\theta_{i +1/2}^{P, n + 1}\right| + \sum_{i\in \I} \Delta x \, \bigg(1 - \frac{t - t_n}{\Delta t}\bigg) \, \left|\theta_{i +1/2}^{P, n }\right|,
\end{align*}
so that
\begin{align*}
     \int_{I_P} \left|\partial_x u^{P, \eps}_{M}(x,t)\right|\, \dd x &\leq \bigg(\frac{t - t_n}{\Delta t}\bigg)\,\sum_{i\in \I} \Delta x \, \left|\theta_{i +1/2}^{P, n + 1}\right| + \bigg(1 - \frac{t - t_n}{\Delta t}\bigg)\, \sum_{i\in \I} \Delta x \, \left|\theta_{i +1/2}^{P, n }\right| \\
    & = \bigg(\frac{t - t_n}{\Delta t}\bigg)\, \sum_{i\in\I} \left(\left|u^{P,n+1}_{i+1}-u^{P,n+1}_i\right|\right)+\bigg(1 - \frac{t - t_n}{\Delta t}\bigg)\, \sum_{i\in\I} \left(\left|u^{P,n}_{i+1}-u^{P,n}_i\right|\right).
\end{align*}
 Applying estimate~\eqref{3TV:ineq} in Theorem~\ref{3theorem-properties}, we end up with
\begin{equation*}
\|\partial_x u^{P, \eps}_M\|_{L^{\infty}(0, T; L^1(I_P))} \leq \, \|\partial_x u_0\|_{L^1(\R)}.
\end{equation*}

It remains to prove the inequality \eqref{3time estimate of u}. For this, using definition~\eqref{3defQ1ext}, we get, for any $(x,t) \in [x_i,x_{i+1}] \times (t_n,t_{n+1})$,
\begin{align*}
\partial_t u^{P, \eps}_M(x,t) &= \frac{1}{\Delta t}\left( \frac{x - x_i}{\Delta x} \right) 
\left[
u^{P, n+1}_{i+1}
- u^{P, n+1}_i
\right]- \frac{1}{\Delta t} \left( \frac{x - x_i}{\Delta x} \right) 
\left[
u^{P, n}_{i+1}
-  u^{P, n}_i
\right] + \frac{1}{\Delta t} \left[u^{P, n+1}_i  - u^{P, n}_i \right].
\end{align*}
Inserting \eqref{3sch:eq} yields
\begin{align*}
\partial_t u^{P, \eps}_M(x,t) &= \frac{1}{\Delta t}\left( \frac{x - x_i}{\Delta x} \right) 
\left[
u^{P, n+1}_{i+1}
- u^{P, n+1}_i
\right]- \frac{1}{\Delta t} \left( \frac{x - x_i}{\Delta x} \right) 
\left[
u^{P, n}_{i+1}
-  u^{P, n}_i
\right] \\
&+ \dfrac{1}{\Delta x}\lambda_i[u^{P, n + 1}] \, \left|u^{P, n}_{i+1}-u^{P, n}_i \right|+ \frac{1}{2\Delta t} \left[u^{P, n}_{i + 1}  - u^{P, n}_i \right].
\end{align*}
Moreover, since $x - x_i \leq \Delta x$ for any $i\in \I$, we obtain
\begin{align*}
\left|\partial_t u^{P, \eps}_M(x,t)\right| \le \frac{\Delta x}{\Delta t}
\left|
\theta_{i + 1/2}^{P, n + 1}
\right| + \frac32 \,\frac{\Delta x}{\Delta t} 
\left|
\theta_{i + 1/2}^{P, n}
\right| 
+ \left|\lambda_i[u^{P, n + 1}]\right| \, \left|
\theta_{i + 1/2}^{P, n + 1}
\right|.
\end{align*}
Therefore, thanks to Theorem~\ref{3theorem-properties} and estimate~\eqref{3Normcesaro}, we have
\begin{multline*}
\int_{I_P}\left|\partial_t u^{P, \eps}_M(x,t)\right| \, \dd x
\le  \frac{\Delta x}{\Delta t} \left(\sum_{i \in \I} 
\left|
\theta_{i + 1/2}^{P, n + 1}
\right|\Delta x + \frac32 \sum_{i \in \I} 
\left|
\theta_{i + 1/2}^{P, n}
\right| \Delta x \right)
\\ + \left(5 \|u^P_0\|_{L^{\infty}(\R)}\|\K\|_{L^1(\R)} + \|a\|_{L^{\infty}(0, T)}\right)\sum_{i \in \I} \, \left|
\theta_{i + 1/2}^{P, n + 1}
\right| \Delta x.
\end{multline*}
Then, applying the CFL condition~\eqref{3deltat/deltax} and estimate~\eqref{3TV:ineq}, we conclude that it holds
\begin{align}\label{3ineg conv partialtu}
\int_{I_P}\left|\partial_t u^{P, \eps}_M(x,t)\right| \, \dd x 
\le  6 \left[ 5 \|u^P_0\|_{L^{\infty}(\R)}\|\K\|_{L^1(\R)} + \|a\|_{L^{\infty}(0, T)}\right]\, \|\partial_x u_0\|_{L^1(\R)}.
\end{align}

Let us now prove that $\pa_x u^{P,\eps}_M$ is uniformly bounded in $L^\infty(0,T;L \log L(I_P))$. For this purpose, the main idea is to apply Lemma \ref{3Llog L estimate}. However, let us notice that the first term in the right hand side of \eqref{3gradient_entropy_estimate} depends on $\Delta x$ through the term $f\left(\left|\theta^{P,0}_{i+1/2}\right|\right)$. Therefore, we need to establish a uniform~w.r.t. $\eps$ estimate on this term.

\begin{lemma}\label{3lem.boundinitent}
Let the assumptions of Theorem \ref{3theorem-properties} hold. Then, there exists a constant $C_0>0$ 
only depending on $\|u_0\|_{L^{\infty}(\R)}, \, \|\partial_x u_0\|_{L^1(\R)}$ and $ \|\partial_x u_0\|_{L \log L(\R)}$,
 such that
\begin{align*}
    I_0=\sum_{i\in\I} \Delta x \, f\left(\left|\theta^{P,0}_{i+1/2}\right|\right) \leq C_0.
\end{align*}
\end{lemma}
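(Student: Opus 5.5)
We bound the discrete initial entropy by the continuous one, using Jensen's inequality on each cell. Since $u^{P,0}_i = u^P_0(x_i)$ and $u^P_0$ is absolutely continuous ($\partial_x u^P_0\in L^1(I_P)$ by Lemma~\ref{3main_2_lem.init.perio.to.nonperiod}), we first write
\[
\theta^{P,0}_{i+1/2} = \frac{1}{\Delta x}\int_{x_i}^{x_{i+1}} \partial_x u^P_0(y)\,\dd y ,
\qquad
\left|\theta^{P,0}_{i+1/2}\right| \le \frac{1}{\Delta x}\int_{x_i}^{x_{i+1}} \left|\partial_x u^P_0(y)\right|\dd y .
\]
The function $f$ in \eqref{3def:entropydensity} is nondecreasing and convex on $[0,\infty)$. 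Its derivative is $\ln x + 1 \ge 0$ for $x\ge 1/e$, and it is $C^1$ at $1/e$ with zero slope. Hence Jensen's inequality for the normalized measure $\dd y/\Delta x$ on $[x_i,x_{i+1}]$ gives
\[
\Delta x\, f\left(\left|\theta^{P,0}_{i+1/2}\right|\right) \le \int_{x_i}^{x_{i+1}} f\left(\left|\partial_x u^P_0(y)\right|\right)\dd y .
\]
Summing over $i\in\I$ yields $I_0 \le \int_{I_P} f(|\partial_x u^P_0|)\,\dd x$.

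Next we bound $\int_{I_P} f(|\partial_x u^P_0|)$ uniformly in $P$, using $f(s)\le s\ln(e+s)+1/e$ on the set where $s\ge 1/e$ and $f=0$ elsewhere. Here $\partial_x u^P_0 = \partial_x u_0 - L^P$ on $(-P,P)$, with $|L^P|\le \|u_0\|_{L^\infty(\R)}/P \le \|u_0\|_{L^\infty(\R)}$ since $P\ge 1$. We split:
\begin{itemize}
\item Because $s\mapsto s\ln(e+s)$ is convex and increasing, $|\partial_x u_0 - L^P|\le |\partial_x u_0|+|L^P|$ gives pointwise
\[
F(|\partial_x u_0|+|L^P|)\le \tfrac12 F(2|\partial_x u_0|)+\tfrac12 F(2|L^P|),
\]
where $F(s)=s\ln(e+s)$.
\item The first term is integrable because $F(2s)\le 2F(s)+2s\ln 2$, so it is controlled by $\int_\R |\partial_x u_0|\ln(e+|\partial_x u_0|)$ and $\|\partial_x u_0\|_{L^1(\R)}$.
\item The second term integrates over a set of length $2P$ to at most $2P\,F(2\|u_0\|_\infty/P)\le 2\|u_0\|_\infty \ln(e+2\|u_0\|_\infty)$, which is uniform in $P$.
\item The additive constant $1/e$ contributes only on the set $\{|\partial_x u^P_0|\ge 1/e\}$. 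By Chebyshev this set has measure at most $e\|\partial_x u^P_0\|_{L^1(I_P)}$, which \eqref{3estimationL1P} bounds uniformly.
\end{itemize}
Alternatively, one can invoke the estimate \eqref{3estimationLlogLP} together with Lemma~\ref{3Llog L estimate} to pass from the Luxemburg norm to the modular, since a bound on the norm implies a bound on $\int f$ via the inequality $f(s)\le \mu\,(s/\mu)\ln(e+s/\mu)\,\cdot\,\max(1,\ln\mu+\cdots)$. I would prefer the direct modular computation above, to avoid tracking norm-versus-modular constants.

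The main obstacle is making sure the quantity controlled by (H1) is the modular $\int|\partial_x u_0|\ln(e+|\partial_x u_0|)$, which it is by definition \eqref{3defLlogL}. The resulting constant then depends on the norm $\|\partial_x u_0\|_{L\log L}$ only through this modular. If one insists on the norm, one uses the standard fact that finiteness of the modular at $\mu=1$ is equivalent to finiteness of the norm, with explicit dependence. A minor point is Jensen's inequality requiring $f$ convex on all of $[0,\infty)$, which I have checked via the $C^1$ junction at $1/e$.
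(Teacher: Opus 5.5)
Your first step is exactly the paper's proof: you write $\theta^{P,0}_{i+1/2}$ as the cell average of $\pa_x u^P_0$, apply monotonicity and Jensen for the convex nondecreasing $f$, and sum to get $I_0\le\int_{I_P} f(|\pa_x u^P_0|)\,\dd x$. You differ in how you bound $\int_{I_P} f(|\pa_x u^P_0|)\,\dd x$ uniformly in $P$.

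The paper applies the modular--norm inequality \eqref{3f in llogl} to $w=|\pa_x u^P_0|$. It then inserts the norm bounds \eqref{3estimationL1P}--\eqref{3estimationLlogLP} from Lemma~\ref{3main_2_lem.init.perio.to.nonperiod}, where the constant $L^P$ was handled by the triangle inequality for the Luxemburg norm together with \eqref{3w in llogl}. So its constant comes out directly in terms of $\|\pa_x u_0\|_{L\log L(\R)}$, as the statement requires.

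You stay at the level of the modular instead: convexity of $F(s)=s\ln(e+s)$, the doubling bound $F(2s)\le 2F(s)+2s\ln 2$, and an explicit computation for $L^P$. This is self-contained, avoids the modular$\to$norm$\to$modular round trip, and shows exactly where $P\ge 1$ enters. In fact your route can drop $P\ge 1$ entirely: $|L^P|\le \frac{1}{2P}\int_{-P}^{P}|\pa_x u_0|\,\dd x$ and Jensen give $P\,F(2|L^P|)\le \frac12\int_{-P}^{P}F(2|\pa_x u_0|)\,\dd x$. The paper's route carries a $1/P$ term and cannot do this.

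The price is that your constant depends on the modular $\int_\R|\pa_x u_0|\ln(e+|\pa_x u_0|)\,\dd x$ rather than on the norm. To match the stated dependence you need the conversion your garbled aside gestures at. It is simply $\int|w|\ln(e+|w|)\,\dd x\le \mu_0+\|w\|_{L^1}\ln\max(1,\mu_0)$, with $\mu_0=\|w\|_{L\log L}$.

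There are two minor slips:
\begin{itemize}
\item Because of the weight $\tfrac12$, the constant term integrates to $P\,F(2|L^P|)\le P\,F(2\|u_0\|_{L^\infty(\R)}/P)$, not $2P\,F(\cdot)$. Only the former is bounded by $2\|u_0\|_{L^\infty(\R)}\ln(e+2\|u_0\|_{L^\infty(\R)})$; the inequality as you wrote it is false.
\item The Chebyshev step is unnecessary, since $f(s)\le s\ln(e+s)$ for every $s\ge 0$.
\end{itemize}
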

\begin{proof} We start from the definition 
\[
\theta_{i+\frac{1}{2}}^{P,0}
= \frac{1}{\Delta x} \int_{x_i}^{x_{i+1}} \partial_x u_0^P(y)\,\mathrm{d}y.
\]
Then, by the triangle inequality,
\[
\big|\theta_{i+\frac{1}{2}}^{P,0}\big|
= \left|\frac{1}{\Delta x} \int_{x_i}^{x_{i+1}} \partial_x u_0^P(y)\,\mathrm{d}y\right|
\le \frac{1}{\Delta x} \int_{x_i}^{x_{i+1}} \big|\partial_x u_0^P(y)\big|\,\mathrm{d}y.
\]
Since the function \(f\) is convex and nondecreasing on \([0,\infty)\), we can apply monotonicity
followed by Jensen’s inequality to obtain
\[
f(|\theta_{i+\frac{1}{2}}^{P,0}|)
\le f\left(\frac{1}{\Delta x} \int_{x_i}^{x_{i+1}} |\partial_x u_0^P(y)|\,\mathrm{d}y\right)
\le \frac{1}{\Delta x} \int_{x_i}^{x_{i+1}} f\big(|\partial_x u_0^P(y)|\big)\,\mathrm{d}y.
\]
Hence,
\[
 f\left(\left|\theta^{P,0}_{i+1/2}\right|\right) 
\le \frac{1}{\Delta x} \int_{x_i}^{x_{i+1}} f\big(|\partial_x u_0^P(y)|\big)\,\mathrm{d}y.
\]
Applying \eqref{3f in llogl} from Lemma ~\ref{3Llog L estimate} yields
\begin{align*}
    I_0 = \sum_{i\in\I} \Delta x \, f\left(\left|\theta^{P,0}_{i+1/2}\right|\right) &\le \int_{I_P} f\big(|\partial_x u_0^P(y)|\big)\,\mathrm{d}y.\\
    &\le 1 + \|\partial_x u^P_0\|_{L\log L(I_P)} + \|\partial_xu_0^P\|_{L^1(I_P)}\ln(1 + \|\partial_x u^P_0\|_{L \log L(I_P)})
\end{align*}
Thanks to \eqref{3estimationLlogLP}, we show that $I_0$ is uniformly bounded with respect to $P$. Finally, by assumption \textbf{(H1)}, we obtain the desired result.
\end{proof}
We are now in position to prove the $L \log L$-boundedness presented in Theorem~\ref{3theorem-convergence}. First, for $(x,t)\in(x_i,x_{i+1})\times[t_n,t_{n+1}]$ with $i\in \I$ and $0\le n \le N_T-1$, we apply the triangular inequality on \eqref{3uepsilonx} and we get
\begin{equation*}
\left|\partial_x u^{P, \eps}_M(x,t) \right| \le  \bigg(\frac{t - t_n}{\Delta t}\bigg) \, \left|\theta_{i +1/2}^{P, n + 1}\right|  + \bigg(1 - \frac{t - t_n}{\Delta t}\bigg) \,  \left|\theta_{i +1/2}^{P, n }\right|. 
\end{equation*}
Now, using the convexity and the increasing property of $f$, we obtain
\[
f\left(\left|\partial_x u^{P, \eps}_M(x,t)\right|\right) \leq \bigg(\frac{t - t_n}{\Delta t}\bigg) \, f\left(\left|\theta_{i +1/2} ^{P, n + 1} \right|\right)+ \bigg(1 - \frac{t - t_n}{\Delta t}\bigg) \, f\left(\left|\theta_{i +1/2}^{P, n } \right|\right).
\]
Therefore, by integrating in space, we have
\begin{multline*}
\int_{I_P} f\left(\left|\partial_x u^{P, \eps}_M(x,t)\right|\right)\, \dd x \leq \bigg(\frac{t - t_n}{\Delta t}\bigg) \, \sum_{i\in\I} \Delta x\, f\left(\left|\theta^{P,n+1}_{i+1/2}\right|\right)
+ \bigg(1 - \frac{t - t_n}{\Delta t}\bigg) \, \sum_{i\in\I} \Delta x \, f\left(\left|\theta^{P,n}_{i+1/2}\right|\right).
\end{multline*}
Thanks to estimate~\eqref{3gradient_entropy_estimate} we get
\begin{align*}
\int_{I_P} f\left(\left|\partial_x u^{P, \eps}_M(x,t) \right|\right)\, \dd x &\leq\ln(2)\, \|\partial_x u_0\|_{L^1(\R)} + 2 e^{-1} +2 \sum_{i \in \I} \Delta x f\left(\left|\theta^{P, 0}_{i+1/2}\right|\right) + C\, T.
\end{align*}
Applying Lemma \ref{3lem.boundinitent}, we deduce that 
\begin{align*}
   \int_{I_P} f\left(\left|\partial_x u^{P, \eps}_M(x,t)\right| \right)\, \dd x 
   \leq \ln(2)\,  \|\partial_x u_0\|_{L^1(\R)} + 2 e^{-1} +2C_0 + C\, T.
\end{align*}
Therefore, by Lemma~\ref{3Llog L estimate}, we get
\begin{align*}
    \|\partial_x u^{P, \eps}_M\|_{L^{\infty}(0, T; L\log L(I_P))} \le 1 + \|\partial_x u^{P, \eps}_M\|_{L^{\infty}(0, T; L^1(I_P))}\ln(1 + e^2) + \sup_{t \in (0, T)}\int_{I_P} f(|\partial_x u^{P, \eps}_M |)\, \dd x,
\end{align*}
that concludes the desired estimate.

\subsection{Compactness properties}\label{3discontinuous viscosity solution} In order to study the convergence of $u^{P, \varepsilon}_M$, we take advantage of the results established in Theorem~\ref{3theorem-properties} and of Simon’s compactness lemma.
\begin{lemma}[Simon's Lemma {\cite[Lemma 4.4]{1}}]\label{3Simon's lemma}
Let $X$, $B$ and $Y$ be Banach spaces such that
\[
X \hookrightarrow B \quad \text{compactly},
\qquad
B \hookrightarrow Y \quad \text{continuously}.
\]
Let $T>0$ and let $(u_n)$ be a sequence satisfying
\[
(u_n) \text{ bounded in } L^\infty(0,T;X),
\]
and
\[
(\partial_t u_n) \text{ bounded in } L^r(0,T;Y)
\quad \text{for some } r>1.
\]
Then $(u_n)$ is relatively compact in
\[
C([0,T];B).
\]
\end{lemma}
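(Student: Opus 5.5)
This is the Aubin–Lions–Simon compactness lemma, and I would prove it through the Arzelà–Ascoli theorem for $B$-valued continuous functions, in two steps.

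\textbf{Step 1: equicontinuity in $Y$.} Since $\partial_t u_n$ is bounded in $L^r(0,T;Y)$ with $r>1$, each $u_n$ has a representative in $C([0,T];Y)$. For $0\le s<t\le T$, Hölder's inequality gives
\begin{align*}
\|u_n(t)-u_n(s)\|_Y \le \int_s^t \|\partial_t u_n(\tau)\|_Y \,\dd\tau \le (t-s)^{1-1/r}\,\|\partial_t u_n\|_{L^r(0,T;Y)} \le C\,(t-s)^{1-1/r}.
\end{align*}
So the family is uniformly Hölder continuous in time with values in $Y$. This is where $r>1$ is used; with $r=1$ one only gets equi-integrability of the derivatives, not a uniform modulus.

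\textbf{Step 2: upgrade to $B$ by an Ehrling-type inequality.} I would first show that for every $\eta>0$ there is $C_\eta$ with
\[
\|v\|_B\le \eta\|v\|_X + C_\eta\|v\|_Y \quad \text{for all } v\in X.
\]
The proof is by contradiction. If it failed, there would be $v_k$ with $\|v_k\|_X=1$ and $\|v_k\|_B> \eta + k\|v_k\|_Y$. Compactness of $X\hookrightarrow B$ gives a subsequence $v_k\to v$ in $B$. Boundedness of $\|v_k\|_B$ forces $\|v_k\|_Y\to 0$, and continuity of $B\hookrightarrow Y$ then gives $v=0$. This contradicts $\|v\|_B\ge\eta$.

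The main obstacle is that this inequality, and the pointwise bound $\|u_n(t)\|_X\le C$, require $B\hookrightarrow Y$ to be injective and $u_n(t)\in X$ for every $t$, not just a.e. $t$. I would handle this as follows.
\begin{itemize}
\item Injectivity is implicit in the word "embedding", so I treat it as given.
\item For the pointwise bound, the inequality holds for a.e. $s,t$, where $\|u_n(t)\|_X\le C$. This yields the modulus
\[
\|u_n(t)-u_n(s)\|_B\le 2C\eta + C_\eta C|t-s|^{1-1/r}
\]
on a full-measure set. Equicontinuity in $B$ with this modulus then extends to all $t$, using the $Y$-continuous representative and density of the good times.
\item If the paper's spaces (Orlicz $L\log L$ on the torus) are not reflexive, I would not need weak-* limits in $X$. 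The a.e. argument above suffices because the modulus is uniform.
\end{itemize}

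\textbf{Step 3: pointwise relative compactness and Arzelà–Ascoli.} For each $t$, the set $\{u_n(t)\}$ is relatively compact in $B$. At good times this holds because it is bounded in $X$ and $X\hookrightarrow B$ is compact. At other times I approximate $u_n(t)$ uniformly in $n$ by $u_n(s)$ at good $s$, using the uniform $B$-modulus, so the set is totally bounded. Arzelà–Ascoli for $C([0,T];B)$ then gives relative compactness.

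In the paper's application I expect the choice $X=W^{1,1}$-type with $\partial_x u$ in $L\log L$, $B=L^1(I_P)$ and $Y=L^1$ or a negative Sobolev space. The bounds from Theorem~\ref{3theorem-convergence} then fit the hypotheses, with $r=\infty$.
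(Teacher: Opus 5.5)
Your proof is correct, and it takes a different route from the source, since the paper gives no proof of this lemma at all. It imports the result from \cite[Lemma 4.4]{1} as a form of Simon's compactness theorem, so the real comparison is with Simon's argument. Simon obtains this statement as a corollary of his general characterization of relatively compact subsets of $L^p(0,T;B)$ and $C([0,T];B)$. That characterization asks for uniform smallness of time translates, together with compactness in $B$ of the averages $\int_{t_1}^{t_2}u_n\,\dd t$. It is fed by the same Ehrling-type inequality $\|v\|_B\le\eta\|v\|_X+C_\eta\|v\|_Y$ that you prove by contradiction. You bypass the general characterization. The $L^\infty(0,T;X)$ bound lets you work with pointwise values, so the $Y$-H\"older estimate of Step~1 and the Ehrling inequality directly give a uniform $B$-modulus
\[
\omega(h)=\inf_{\eta>0}\bigl(2C\eta+C_\eta C' h^{1-1/r}\bigr),
\]
with $C$ the $L^\infty(0,T;X)$ bound and $C'$ the $L^r(0,T;Y)$ bound on $\partial_t u_n$. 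Arzel\`a--Ascoli then finishes. For exactly the case stated here this is shorter and more elementary. Like Simon's proof, it uses no reflexivity of $X$, which matters here since $L\log L$ is not reflexive. Simon's machinery is what you would need if the bound were only in $L^p(0,T;X)$ with $p<\infty$. There pointwise values are uncontrolled and one only gets compactness in $L^p(0,T;B)$.

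One step you compress deserves a sentence. At a time $t$ outside the good set, $u_n(t)$ is a priori only an element of $Y$. Take good times $t_k\to t$. The modulus gives $\limsup_{k,j}\|u_n(t_k)-u_n(t_j)\|_B\le 2C\eta$ for every $\eta>0$, so $(u_n(t_k))_k$ is Cauchy in $B$. Its $B$-limit equals the $Y$-limit $u_n(t)$ by continuity and injectivity of $B\hookrightarrow Y$, and the modulus inequality then passes to the limit. The same argument shows that every $u_n(t)$ lies in the $B$-closure of the image of the $X$-ball of radius $C$. That gives the pointwise relative compactness of Step~3 at once, without the separate total-boundedness argument. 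As a side remark, the paper applies the lemma with $X=W^{1,L\log L}(I_P)$, $B=L^\infty(I_P)$, $Y=L^1(I_P)$ and $r=\infty$, not with $B=L^1$. Your proof covers that case, with a Lipschitz modulus in $Y$.
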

\begin{lemma}\label{3uniform_regularity}
Let $T > 0$ and let $I_P \subset \mathbb{R}$ be defined in \eqref{3defI}. Consider a family of periodic functions 
$(u^{P,\varepsilon}_M)_{\varepsilon > 0}$ defined on $I_P \times [0,T]$, satisfying the uniform bounds~\eqref{3first_L_infty_estimate}--\eqref{3bounded in LlogL}. Then, the family $(u^{P,\varepsilon}_M)_{\varepsilon}$ is relatively compact in $C([0,T];L^{\infty}(I_P))$. In particular, up to the extraction of a subsequence, there exists a function
\[
u^P_M \in C([0,T];L^{\infty}(I_P))
\]
such that
\[
u^{P,\varepsilon}_M \to u^P_M
\quad \text{in } C([0,T];L^{\infty}(I_P)),
\]
that is,
\[
\lim_{\varepsilon\to0}
\|u^{P,\varepsilon}_M(\cdot,t)
-
u^P_M(\cdot,t)\|_{L^{\infty}(I_P \times (0, T))}
=0.
\]
\end{lemma}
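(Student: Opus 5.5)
The plan is to combine Simon's Lemma (Lemma~\ref{3Simon's lemma}) with a one-dimensional embedding, namely that functions on the compact torus $I_P$ with derivative in $L^1(I_P)$ are continuous with uniformly controlled oscillation. We take $X = W^{1,1}(I_P)$, $B = C(I_P)$ with the sup norm (hence $\hookrightarrow L^\infty(I_P)$), and $Y = L^1(I_P)$. The embedding $W^{1,1}(I_P)\hookrightarrow C(I_P)$ is continuous but only \emph{not} compact. For this reason we will use the $L\log L$ bound and instead take
\[
X = \{ v \in L^\infty(I_P) \, : \, \pa_x v \in L\log L(I_P)\}, \qquad \|v\|_X = \|v\|_{L^\infty(I_P)} + \|\pa_x v\|_{L\log L(I_P)} .
\]

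The first step is to check that $X \hookrightarrow C(I_P)$ compactly. For $v\in X$ and $x<y$ with $|y-x|=h$ small, the Orlicz--Hölder inequality gives
\[
|v(y)-v(x)| \le \int_x^y |\pa_x v| \le C\,\|\pa_x v\|_{L\log L}\,\|\mathds 1_{[x,y]}\|_{\exp L} \le \frac{C\,\|\pa_x v\|_{L\log L}}{\ln(1+1/h)} .
\]
Alternatively, one can argue directly with $f$. One splits $\int_x^y|\pa_x v|$ into the part where $|\pa_x v|\le K$, bounded by $Kh$, and the part where $|\pa_x v|> K$, bounded by $\frac{1}{\ln K}\int f(|\pa_x v|)$ plus a constant multiple of $\|\pa_x v\|_{L^1}$ divided by $\ln K$. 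Either way we obtain a modulus of continuity $\omega(h)\to 0$ that is uniform on bounded subsets of $X$. Together with the uniform $L^\infty$ bound, Arzelà--Ascoli then gives compactness in $C(I_P)$. This is the key step, and the \emph{only} place where the $L\log L$ estimate is used; without it, $L^1$ control of $\pa_x u$ would permit jumps in the limit.

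Next we verify the hypotheses of Lemma~\ref{3Simon's lemma}. Boundedness of $(u^{P,\eps}_M)$ in $L^\infty(0,T;X)$ follows from \eqref{3first_L_infty_estimate} and \eqref{3bounded in LlogL}. Boundedness of $\pa_t u^{P,\eps}_M$ in $L^\infty(0,T;L^1(I_P))\subset L^r(0,T;Y)$ for any $r>1$ follows from \eqref{3time estimate of u}. Finally, $B=C(I_P)\hookrightarrow L^1(I_P)$ continuously since $I_P$ is compact. Simon's lemma then yields relative compactness in $C([0,T];C(I_P))\subset C([0,T];L^\infty(I_P))$, and we extract a subsequence converging to some $u^P_M$. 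The limit is then the uniform limit on $I_P\times[0,T]$; this is exactly the stated convergence, and it also gives the locally uniform convergence claimed in Theorem~\ref{3theorem-convergence}.

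The main obstacle I expect is making the compact embedding rigorous with a clean uniform modulus. I would first try the $f$-splitting argument above with $K = h^{-1/2}$. With that choice both pieces tend to zero, and the bound depends only on $\sup_t\int f(|\pa_x u^{P,\eps}_M|)$ and on \eqref{3bounded ux}. A minor point is that $u^{P,\eps}_M$ is Lipschitz for each fixed $\eps$, so it belongs to $X$ and the modulus estimate applies pointwise in $t$.
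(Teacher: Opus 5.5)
Your proposal is correct and follows the paper's proof. Both arguments apply Simon's lemma with $X$ a $W^{1,L\log L}$-type space and $Y=L^1(I_P)$, using the $L\log L$ bound for the compact spatial embedding and \eqref{3time estimate of u} for the time control. The differences are minor: you prove the compact embedding yourself (a uniform modulus of continuity via the $f$-splitting or Orlicz--H\"older, then Arzel\`a--Ascoli) where the paper cites Lemma~\ref{3compact}, and you take $B=C(I_P)$ instead of $L^\infty(I_P)$, which has the small advantage of directly producing a limit in $C(I_P\times[0,T])$, as Definition~\ref{3def:discont_visc_fixed} requires.
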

\begin{proof} 
For each fixed $t\in[0,T]$, the estimates 
\eqref{3first_L_infty_estimate}--\eqref{3bounded ux}--\eqref{3bounded in LlogL} imply that
\[
u^{P,\varepsilon}_M \in L^\infty(0,T; W^{1,L\log L}(I_P)).
\]
Moreover, thanks to \eqref{3time estimate of u}, we have
\[
\partial_t u^{P,\varepsilon}_M
\text{ bounded in }
L^\infty(0,T;L^1(I_P)).
\]
By Lemma~\ref{3compact} and by applying Lemma~\ref{3Simon's lemma}
with
\[
X = W^{1,L\log L}(I_P), 
\qquad 
B = L^\infty(I_P), 
\qquad 
Y = L^1(I_P),
\]
and using the fact that
\[
L^\infty(I_P) \hookrightarrow L^1(I_P)
\]
continuously (since $I_P$ is bounded), we deduce that
$u^{P,\varepsilon}_M$ is relatively compact in
$C([0,T];L^\infty(I_P)).$ Consequently, there exist a subsequence
and a limit function $u^P_M$ such that
\[
u^{P,\varepsilon}_M \to u^P_M
\quad \text{in } C([0,T];L^\infty(I_P)).
\]
This completes the proof.
\end{proof}

\subsection{Further properties of the scheme and proof of Theorem~\ref{3theorem-convergence}}\label{3viscosity solution}

In this section, we prove Theorem~\ref{3theorem-convergence}. We divide our proof into two steps. We first introduce the notions of stability, monotonicity, and consistency for our numerical scheme, and then we prove that scheme~\eqref{3sch:eq} satisfies these properties. Then, we prove the convergence of the scheme. But first, let us define the following operator:
\begin{multline} \label{3eq:operator_node}
S_{\varepsilon}(x_i, t_{n+1}, u^{P, n + 1}, u^{P, \varepsilon}_M) :=
\frac{u^{P, n + 1}_i - \frac12 \big( u^{P, \varepsilon}_M(x_i+\Delta x, t_{n+1}-\Delta t) + u^{P, \varepsilon}_M(x_i, t_{n+1}-\Delta t) \big)}{\Delta t}
\\
- \lambda_i[u^{P, n + 1}]\, \bigg[\frac{ \big| u^{P, \varepsilon}_M(x_i+\Delta x, t_{n+1}-\Delta t) - u^{P, \varepsilon}_M(x_i, t_{n+1}-\Delta t) \big|}{\Delta x}\bigg],
\end{multline}
where the discrete convolution is given by \eqref{3def:lambda}. We notice then that scheme \eqref{3sch:eq} is equivalent to
\begin{equation} \label{3eq:scheme_equiv_node}
S_{\varepsilon}(x_i, t_{n+1}, u^{P, n+1}, u^{P, \varepsilon}_M) = 0
\qquad\text{for every }i \in \I,\,n \in \{0, ..., N_T\}.
\end{equation}
\begin{definition}[Monotonicity, Stability, and Consistency] \label{3def:viscosit}
We say that the numerical scheme \eqref{3sch:eq} is:

\begin{itemize}
\item \textbf{Monotone:} if and only if, for every $v=(v_j)_{j\in\I}$ with $|v_j|\le\alpha$,
for all $(x,t)\in\R\times(0,T)$, and for all
$\psi_1,\psi_2 \in
[-\|u_0^P\|_{L^\infty(I_P)},\,\|u_0^P\|_{L^\infty(I_P)}]$
such that $\psi_1\ge\psi_2$, one has
\[
S_{\varepsilon}(x,t,v,\psi_1)
\le
S_{\varepsilon}(x,t,v,\psi_2).
\]
\item \textbf{Stable:} if and only if $u^{P, \varepsilon}_M,$ defined in \eqref{3defQ1ext}, is bounded independently of $\varepsilon$.

\item \textbf{Consistent:} 
for every $\varphi\in C^1(I_P\times[0,T])$, every grid point
$(x_i,t_{n+1})$ with $i\in\I$ and $n\in\{0,\ldots,N_T\}$,
every $(x,t)\in I_P\times(0,T]$, and every $\xi\in\mathbb{R}$, then for
\[
\varphi(\cdot,t_{n+1})
:=
\bigl(\varphi(x_j,t_{n+1})\bigr)_{j\in\I},
\]
we have
\[
\lim_{\substack{
(x_i,t_{n+1})\to(x,t)\\
\varepsilon\to0\\
\xi\to0}}
S_{\varepsilon}\bigl(
x_i,t_{n+1},
\varphi(\cdot,t_{n+1})+\xi,
\varphi+\xi
\bigr)
=
\partial_t\varphi(x,t)
-
\Bigl[
\bigl(\sigma_M^P(\cdot)\star\varphi(\cdot,t)\bigr)(x)
+a(t)
\Bigr]
\bigl|\partial_x\varphi(x,t)\bigr|.
\]
\end{itemize}
\end{definition}
\begin{proposition}
The scheme \eqref{3sch:eq} is stable, monotone and consistent in the sense of Definition~\ref{3def:viscosit}.
\end{proposition}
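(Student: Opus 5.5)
We verify the three properties in the order stability, monotonicity, consistency, each time using the explicit form of $S_\varepsilon$ in \eqref{3eq:operator_node}.

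\emph{Stability} is immediate. By Theorem~\ref{3theorem-properties} and estimate \eqref{3first_L_infty_estimate}, $\|u^{P,\varepsilon}_M\|_{L^\infty(I_P\times(0,T))}\le \|u^P_0\|_{L^\infty(I_P)}\le 2\|u_0\|_{L^\infty(\R)}$, and this bound is independent of $\varepsilon$. The $Q_1$-extension is a convex combination of nodal values, so it inherits the same bound.

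\emph{Monotonicity.} Fix $v$ with $|v_j|\le \alpha$ and a point $(x,t)$. We write $p=\psi(x+\Delta x,t-\Delta t)$ and $q=\psi(x,t-\Delta t)$, and set $\Lambda=\lambda[v]$. By \eqref{3Normcesaro} and the CFL condition \eqref{3deltat/deltax}, $\frac{\Delta t}{\Delta x}|\Lambda|\le \tfrac12$. We then expand
\[
\Delta t\,S_\varepsilon = v_i - \Big(\tfrac12+\tfrac{\Delta t}{\Delta x}\Lambda\,\mathrm{sign}(p-q)\Big)p-\Big(\tfrac12-\tfrac{\Delta t}{\Delta x}\Lambda\,\mathrm{sign}(p-q)\Big)q .
\]
This is the same convex-combination rewriting used in the fixed point argument of Section~\ref{3well-posedness of the scheme}. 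Since $\tfrac12 p+\tfrac12 q+\tfrac{\Delta t}{\Delta x}\Lambda|p-q|$ is a Lipschitz function of $(p,q)$, we check that it is nondecreasing in each argument. Its partial derivatives, where they exist, are $\tfrac12\pm\tfrac{\Delta t}{\Delta x}\Lambda\,\mathrm{sign}(p-q)\ge 0$. A piecewise-linear continuous function with nonnegative one-sided slopes is nondecreasing. Hence $\psi_1\ge\psi_2$ pointwise gives $S_\varepsilon(x,t,v,\psi_1)\le S_\varepsilon(x,t,v,\psi_2)$. The restriction on the range of $\psi_i$ and on $v$ is exactly what ensures the CFL bound on $\Lambda$ applies; this is where \eqref{3Normcesaro} is used.

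\emph{Consistency.} Take $\varphi\in C^1$, a grid point $(x_i,t_{n+1})\to(x,t)$ and $\xi\to0$. The constant $\xi$ cancels in the difference quotients. We handle the terms in three steps.

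\begin{itemize}
\item For the time quotient, we write
\[
\frac{\varphi(x_i,t_{n+1})-\frac12[\varphi(x_i+\Delta x,t_n)+\varphi(x_i,t_n)]}{\Delta t}
=\frac{\varphi(x_i,t_{n+1})-\varphi(x_i,t_n)}{\Delta t}-\frac{\Delta x}{2\Delta t}\,\frac{\varphi(x_i+\Delta x,t_n)-\varphi(x_i,t_n)}{\Delta x}.
\]
By Taylor expansion and uniform continuity of $\partial_t\varphi,\partial_x\varphi$, the first term tends to $\partial_t\varphi(x,t)$.
\item The gradient term tends to $|\partial_x\varphi(x,t)|$.
\item For the nonlocal term, $\lambda_i[\varphi(\cdot,t_{n+1})+\xi]=a(t_{n+1})+\sum_j\Delta x\,\sigma^P_{M,j}(\varphi(x_{i-j},t_{n+1})+\xi)$. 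Here $\sigma^P_M$ is a trigonometric polynomial, hence smooth. The sum is therefore a Riemann sum converging to $(\sigma^P_M\star\varphi(\cdot,t))(x)$, while the $\xi$ contribution is bounded by $\xi\|\sigma^P_M\|_{L^1}\to0$. Continuity of $a$ gives $a(t_{n+1})\to a(t)$.
\end{itemize}

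The main obstacle is the second piece of the time quotient, $\frac{\Delta x}{2\Delta t}\partial_x\varphi$. Under the fixed-ratio CFL \eqref{3deltat/deltax} it does not vanish, since it equals $2(10\|\K\|\|u_0\|+\|a\|)\,\partial_x\varphi$ at leading order. So the Lax–Friedrichs-type averaging $\frac12(u_{i+1}+u_i)$ is consistent only with a drift term. My first attempt would be to interpret the averaging as centered at $x_{i+1/2}$, evaluating the limit at $x_i+\Delta x/2\to x$, and to absorb the shift by comparing with $\varphi(x_i+\Delta x/2,\cdot)$. If that fails, I would instead require $\Delta x/\Delta t\to0$ along the sequence, or reinterpret the node as $x_{i+1/2}$, noting that the limit in the consistency definition is taken jointly as $(x_i,t_{n+1})\to(x,t)$. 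This is the step that needs the most care, and the final argument must make the choice of evaluation point explicit so that this term is seen to vanish in the limit.
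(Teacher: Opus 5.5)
Your stability and monotonicity arguments match the paper's: it differentiates $T_\varepsilon$ in $\psi_{i+1}$ and $\psi_i$ under the CFL condition, and your one-sided-slope remark only makes the kink at $\psi_{i+1}=\psi_i$ rigorous. The term you isolate in the consistency step is a genuine obstruction, and the paper's own proof breaks at exactly this point. After expanding $\varphi(x_i,t_n)$ in time and $\varphi(x_{i+1},t_n)$ in space, the paper asserts that the time quotient equals $\partial_t\varphi(x_i,t_{n+1})+o(1)$. In doing so it silently drops
\[
-\frac{\Delta x}{2\Delta t}\,\partial_x\varphi(x_i,t_n)=-2\bigl(10\|\K\|_{L^1(\R)}\|u_0\|_{L^\infty(\R)}+\|a\|_{L^\infty(0,T)}\bigr)\,\partial_x\varphi(x_i,t_n),
\]
which is of order one because \eqref{3deltat/deltax} fixes $\Delta x/\Delta t$. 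Concretely, take the periodic test function $\varphi(x,t)=\frac{P}{\pi}\sin(\pi x/P)$ and grid points $(x_i,t_{n+1})\to(0,t)$. The left-hand side of the consistency relation tends to $-\frac{\Delta x}{2\Delta t}-\bigl[(\sigma^P_M\star\varphi(\cdot,t))(0)+a(t)\bigr]$, while the right-hand side equals $-\bigl[(\sigma^P_M\star\varphi(\cdot,t))(0)+a(t)\bigr]$. So consistency in the sense of Definition~\ref{3def:viscosit} fails for \eqref{3eq:operator_node}. The average $\frac12(u^{P,n}_{i+1}+u^{P,n}_i)=u^{P,n}_i+\frac12(u^{P,n}_{i+1}-u^{P,n}_i)$ is an upwind transport step with Courant number $\frac12$. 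The scheme is therefore consistent with $\partial_t u-s\,\partial_x u=[\sigma^P_M\star u+a]\,|\partial_x u|$, where $s:=\frac{\Delta x}{2\Delta t}$, rather than with \eqref{31.equPaux}. Your proposal does not prove the proposition, but neither does the paper's argument. With the operator \eqref{3eq:operator_node} and the CFL relation \eqref{3deltat/deltax}, the consistency part of the statement is false.

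Neither of your fallbacks repairs this. Letting $\Delta x/\Delta t\to0$ contradicts \eqref{3deltat/deltax}, and more basically contradicts monotonicity, which forces $\Delta x/\Delta t\ge 2|\lambda_i|$. A one-step recentring at $x_i+\Delta x/2$ is not available inside Definition~\ref{3def:viscosit}. There $S_\varepsilon$ compares $v_i$, located at $x_i$, with an average centred at $x_{i+1/2}$, and comparing with $\varphi(x_i+\Delta x/2,t_n)$ reproduces the same $-\frac{\Delta x}{2\Delta t}\partial_x\varphi$. Moreover the half-cell shift accumulates, so $u^{P,n}_i$ approximates the intended solution at $x_i+n\Delta x/2=x_i+s\,t_n$. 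What works is a moving grid: attach $u^{P,n}_i$ to the point $(x_i+s\,t_n,t_n)$ in the $Q_1$ extension \eqref{3defQ1ext} and in the operator. With $y=x_i+s\,t_{n+1}$, the average becomes the centred one $\frac12\bigl(\psi(y+\tfrac{\Delta x}{2},t_n)+\psi(y-\tfrac{\Delta x}{2},t_n)\bigr)$, whose defect is $o(\Delta x)/\Delta t=o(1)$ for $\varphi\in C^1$. Your Taylor and Riemann-sum steps then yield consistency with \eqref{31.equPaux} at $y$. Equivalently, since the periodic convolution commutes with translations, $w(y,t):=u(y-st,t)$ maps solutions of the drifted equation to solutions of \eqref{31.equPaux}. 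The alternative is to change the scheme itself, e.g.\ by alternating forward and backward averages on staggered grids.
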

\begin{proof} To prove that scheme \eqref{3sch:eq} is stable, monotone, and consistent, we proceed in three steps, dealing with each property separately: first stability, then monotonicity, and finally consistency.\\
\textbf{Step 1.} (Stability):
From \eqref{3ineq:Linfty} given in Theorem \ref{3theorem-properties}, we have
\[
\max_{i \in \I} |u^{P,n+1}_i| \le 2\|u_0\|_{L^\infty(\R)},
\]
for all $1 \le n \le N_T$. Therefore, we deduce that $u^{P, \varepsilon}_M$ defined in \eqref{3defQ1ext} satisfies a uniform $L^\infty$-bound, which proves the stability of the scheme.\\
    \textbf{Step 2.} (Monotonicity):
    We prove that the scheme is monotone in the sense of Definition \ref{3def:viscosit}.  
For simplicity, fix $(x_i,t_{n+1})$ and denote
\[
\psi_i := \psi(x_i,t_{n+1}-\Delta t),
\qquad
\psi_{i+1} := \psi(x_i+\Delta x,t_{n+1}-\Delta t).
\]
From \eqref{3eq:operator_node}, the operator
$S_{\varepsilon}$ can be rewritten as
\[
S_{\varepsilon}(x_i,t_{n+1},v,\psi)
=
\frac{v_i - T_{\varepsilon}(\psi)}{\Delta t},
\]
where
\[
T_{\varepsilon}(\psi)
=
\frac12(\psi_{i+1}+\psi_i)
+
\frac{\Delta t}{\Delta x}
\lambda_i[v] \,|\psi_{i+1}-\psi_i|.
\]
Therefore, to prove that $S_{\varepsilon}$ is monotone, it suffices to show that
$T_{\varepsilon}$ is non-decreasing with respect to
$\psi_{i+1}$ and $\psi_i$. We compute the partial derivatives
\[
\frac{\partial T_{\varepsilon}}{\partial \psi_{i+1}}
=
\frac12
+
\frac{\Delta t}{\Delta x}
\lambda_i[v]
\,\mathrm{sign}(\psi_{i+1}-\psi_i),
\]
and
\[
\frac{\partial T_{\varepsilon}}{\partial \psi_i}
=
\frac12
-
\frac{\Delta t}{\Delta x}
\lambda_i[v]
\,\mathrm{sign}(\psi_{i+1}-\psi_i).
\]
with respect to $\psi_{i+1}$, $\psi_i$ respectively.
By the CFL condition \eqref{3deltat/deltax}, both partial derivatives are nonnegative:
\[
\frac{\partial T_{\varepsilon}}{\partial \psi_{i+1}} \ge 0,
\qquad
\frac{\partial T_{\varepsilon}}{\partial \psi_i} \ge 0.
\]
Hence $T_{\varepsilon}$ is non-decreasing with respect to
$\psi_{i+1}$ and $\psi_i$. Consequently, for every
$\psi_1 \ge \psi_2$,
\[
T_{\varepsilon}(\psi_1)
\ge
T_{\varepsilon}(\psi_2),
\]
which implies
\[
S_{\varepsilon}(x_i, t_{n + 1}, v,\psi_1)
\le
S_{\varepsilon}(x_i, t_{n + 1}, v,\psi_2).
\]
Therefore, the scheme is monotone in the sense of Definition \ref{3def:viscosit}.\\
\textbf{Step 3.} (Consistency):
Let $\varphi \in C^1(I_P\times[0,T])$,
let $(x,t)\in I_P\times(0,T]$,
and let $\xi\in\mathbb{R}$.
Let $(x_i,t_{n+1})$ be grid points such that
\[
(x_i,t_{n+1}) \longrightarrow (x,t)
\quad \text{as } \Delta x,\Delta t \to 0.
\]We evaluate the operator $S_{\varepsilon}$
defined in \eqref{3eq:operator_node}
at the grid node $(x_i,t_{n+1})$:

\begin{multline*}
 S_{\varepsilon}\big(x_i,t_{n+1},
\varphi(\cdot,t_{n+1})+\xi,
\varphi+\xi\big)
=
\frac{\varphi(x_i,t_{n+1})+\xi
-\frac12\Big(
\varphi(x_{i+1},t_n)+\xi
+
\varphi(x_i,t_n)+\xi
\Big)}
{\Delta t}
\\
-
\lambda_i[\varphi(\cdot,t_{n+1}) +\xi]
\left|
\frac{
\varphi(x_{i+1},t_n)
-
\varphi(x_i,t_n)
}{\Delta x}
\right|.
\end{multline*}
Since the constant $\xi$ cancels in the numerator, we obtain
\begin{multline*}
 S_{\varepsilon}\big(x_i,t_{n+1},
\varphi(\cdot,t_{n+1})+\xi,
\varphi+\xi\big)
=
\frac{\varphi(x_i,t_{n+1})
-\frac12\Big(
\varphi(x_{i+1},t_n)
+
\varphi(x_i,t_n)
\Big)}
{\Delta t}
\\
-
\lambda_i[\varphi(\cdot,t_{n+1}) +\xi]
\left|
\frac{
\varphi(x_{i+1},t_n)
-
\varphi(x_i,t_n)
}{\Delta x}
\right|.
\end{multline*}
Since $\varphi\in C^1$, Taylor expansion in time gives
\[
\varphi(x_i,t_n)
=
\varphi(x_i,t_{n+1})
-
\Delta t\,\partial_t \varphi(x_i,t_{n+1})
+
o(\Delta t).
\]
Moreover, expanding in space,
\[
\varphi(x_{i+1},t_n)
=
\varphi(x_i,t_n)
+
\Delta x\,\partial_x \varphi(x_i,t_n)
+
o(\Delta x).
\]Combining both expansions, we obtain
\[
\frac{\varphi(x_i,t_{n+1})
-\frac12\Big(
\varphi(x_{i+1},t_n)
+
\varphi(x_i,t_n)
\Big)}
{\Delta t}
=
\partial_t \varphi(x_i,t_{n+1})
+
o(1),
\]
as $\Delta t,\Delta x \to 0$. For the second term, we also have
\[
\frac{
\varphi(x_{i+1},t_n)
-
\varphi(x_i,t_n)
}{\Delta x}
=
\partial_x \varphi(x_i,t_n)
+
o(1),
\]
and therefore
\[
\left|
\frac{
\varphi(x_{i+1},t_n)
-
\varphi(x_i,t_n)
}{\Delta x}
\right|
=
|\partial_x \varphi(x_i,t_{n+1})|\,
+ \,
o(1).
\]
Regarding the discrete convolution term, we have from \eqref{3def:lambda},
\[
\lambda_i[\varphi (\cdot,t_{n+1})+\xi]
= a(t_{n + 1}) + \sum_{j\in\I}
\Delta x \,
\sigma^P_{M,j}\,
\varphi(x_{i-j}, t_{n+1}) \;+\; \xi\sum_{j\in\I} \Delta x\,\sigma^P_{M,j}.
\]
The term $\displaystyle{\sum_{j \in \I}\Delta x\,\sigma^P_{M,j}}$ is uniformly bounded by \eqref{3Normcesaro}, which implies that $\displaystyle{\xi\sum_{j \in \I}\Delta x\,\sigma^P_{M,j} \to 0}$ as $\xi \to 0.$
Moreover, the remaining Riemann sum
\(\displaystyle{\sum_{j\in\I}
\Delta x \,
\sigma^P_{M}(x_j)\,
\varphi(x_{i-j}, t_{n+1})}\) converges to the continuous convolution
\[
(\sigma^P_M(\cdot)\star \varphi(\cdot,t))(x) = \int_{I_P}\sigma^P_M(z)\,
\varphi(t,x-z)\,dz.,
\]
as $\varepsilon=(\Delta x,\Delta t)\to (0,0)$. Therefore, for any sequence of grid
nodes $(x_i,t_{n+1})\to(x,t)$ as $\varepsilon=(\Delta x,\Delta t)\to (0,0)$ and for $\xi\to 0$,
\[
\begin{aligned}
\lim_{\substack{(x_i, t_{n + 1})\to(x,t)\\ \varepsilon\to0\\ \xi\to0}}
S_{\varepsilon}\big(x_i,t_{n+1},
\varphi(\cdot,t_{n+1})+\xi,
\varphi+\xi\big)
&= \partial_t\varphi(x,t) - \left[\big(\sigma^P_M (\cdot) \star\varphi(\cdot,t)\big)(x)  + a(t)\right]\,|\partial_x\varphi(x,t)|.
\end{aligned}
\]
Therefore, the scheme is consistent in the sense of
Definition~\ref{3def:viscosit}.
\end{proof}

Before to conclude the proof of Theorem~\ref{3theorem-convergence}, we recall the definition of continuous viscosity solutions for~\eqref{31.equPaux}. For further background on viscosity solutions, we refer the reader to Barles~\cite{5}, Crandall and Ishii~\cite{14}, and Crandall and Lions~\cite{15}.

\begin{definition}[Viscosity solution]
\label{3def:discont_visc_fixed}
Assume the convolution part is locally bounded on $I_P\times(0,T)$ and $u^P_0$ is locally bounded on $I_P$. 
We say that a continuous function $u^P_M\in C(I_P\times[0,T])$ is a viscosity solution of \eqref{31.equPaux} if:

\begin{enumerate}
\item \emph{Initial condition:} $u^P_M(x,0)=u^P_0(x)\quad\text{for all }x\in I_P$.

\item \emph{Test-function condition:}
\begin{itemize}
\item For any $\varphi\in C^1( I_P \times(0,T))$, if $u^P_M-\varphi$ attains a local maximum at $(x_0,t_0)\in I_P \times(0,T)$, then
\[
\partial_t\varphi(x_0,t_0)
-\left[\big(\sigma_M^P(\cdot)\star \varphi(\cdot,t_0)\big)(x_0) + a(t_0) \right]\,|\partial_x\varphi(x_0,t_0)|
\le 0.
\]

\item For any $\varphi\in C^1(I_P\times(0,T))$, if $u^P_M-\varphi$ attains a local minimum at $(x_0,t_0)$, then
\[
\partial_t\varphi(x_0,t_0)
-\left[\big(\sigma_M^P(\cdot)\star \varphi(\cdot,t_0)\big)(x_0) +  a(t_0) \right]\,|\partial_x\varphi(x_0,t_0)|
\ge 0.
\]
\end{itemize}
\end{enumerate}
\end{definition}

Now, we can proceed to the proof of Theorem \ref{3theorem-convergence}.

\begin{proof}[Proof of Theorem~\ref{3theorem-convergence}] We aim to show that $u^P_M(x,0) = u^P_0(x)$ for all $x \in I_P$. By Lemma \ref{3uniform_regularity}, we have the strong convergence of the family $(u^{P, \varepsilon}_M)_{\varepsilon}$ to the limit $u^P_M$
\[
u^{P, \varepsilon}_M \to u^P_M \quad \text{in } C([0,T]; L^\infty(I_P)).
\]
This uniform convergence in space and time implies that for any $t \in [0,T]$
\[
\lim_{\varepsilon \to 0} \| u^{P, \varepsilon}_M(\cdot, t) - u^P_M(\cdot, t) \|_{L^\infty(I_P)} = 0.
\]
In particular, evaluating this at the initial time $t=0$, we obtain
\begin{equation}\label{3eq:limit_t0}
\lim_{\varepsilon \to 0} \| u^{P, \varepsilon}_M(\cdot, 0) - u^P_M(\cdot, 0) \|_{L^\infty(I_P)} = 0.
\end{equation}
At $t=0$, we consider the first time interval where $n=0$ and $t_n = 0$. Substituting $t=0$ into \eqref{3defQ1ext}, we obtain
\[
u^{P, \varepsilon}_M(x, 0) = \left( \frac{x - x_i}{\Delta x} \right) u^{P, 0}_{i+1} + \left( 1 - \frac{x - x_i}{\Delta x} \right) u^{P, 0}_i.
\]
Using the definition of the discrete initial data $u^{P, 0}_j = u^P_0(x_j)$ from the scheme initialization, we have
\begin{equation}\label{3eq:interpolant}
u^{P, \varepsilon}_M(x, 0) = \left(\frac{x - x_i}{\Delta x}\right) u^P_0(x_{i+1}) + \left( 1 - \frac{x - x_i}{\Delta x} \right) u^P_0(x_i), \quad \text{for } x \in [x_i, x_{i+1}].
\end{equation}
Equation \eqref{3eq:interpolant} shows that $u^{P, \varepsilon}_M(\cdot, 0)$ is precisely the piecewise linear interpolant of the initial function $u^P_0$ on the spatial grid. Since $u^P_0$ is continuous on the interval $I_P$, the error of linear interpolation vanishes as the mesh size $\Delta x \to 0$ (which corresponds to $\varepsilon \to 0$):
\begin{equation}\label{3eq:interp_conv}
\| u^{P, \varepsilon}_M (\cdot, 0) - u^P_0 \|_{L^\infty(I_P)} \to 0 \quad \text{as } \varepsilon \to 0.
\end{equation}
For any $x \in I_P$, we apply the triangle inequality:
\[
| u^P_M(x, 0) - u^P_0(x) | \le | u^P_M(x, 0) - u^{P, \varepsilon}_M(x, 0) |\, + \, | u^{P, \varepsilon}_M(x, 0) - u^P_0(x) |.
\]
Taking the supremum over $x \in I_P$ yields:
\[
\| u^P_M(\cdot, 0) - u^P_0 \|_{L^\infty(I_P)} \le \| u^P_M(\cdot, 0) - u^{P, \varepsilon}_M(\cdot, 0) \|_{L^\infty(I_P)} + \, \| u^{P, \varepsilon}_M (\cdot, 0) - u^P_0 \|_{L^\infty(I_P)}.
\]
As $\varepsilon \to 0$, the first term on the right-hand side converges to $0$ by \eqref{3eq:limit_t0}. Moreover, the second term converges to $0$ due to the consistency of the spatial interpolation \eqref{3eq:interp_conv}. Consequently,
\[
\| u^P_M(\cdot, 0) - u^P_0 \|_{L^\infty(I_P)} = 0,
\]
which implies that
\[
u^P_M(x, 0) = u^P_0(x) \qquad \text{for all } x \in I_P.
\] 
 We are now ready to prove that $u^{P}_M$ is a viscosity solution. 
Let $(x_0,t_0)\in I_P\times(0,T]$ and let $\varphi\in C^1(I_P\times[0,T])$ be such that 
$u^P_M - \varphi$ attains a strict local maximum at $(x_0,t_0)$. 
By a standard argument, we may assume without loss of generality that $\varphi$ is tangent from above to $u^P_M$ at $(x_0,t_0)\in I_P\times(0,T)$. 
We must show that
\[
\partial_t\varphi(x_0,t_0)
- \left[\big(\sigma^P_M\star\varphi(\cdot,t_0)\big)(x_0) + a(t_0)\right]\,|\partial_x\varphi(x_0,t_0)|
\le 0.
\]
By a standard technique used in the theory of viscosity solutions (see Lemma \ref{3barles}), we can say that there exists a subsequence still denoted as
\[
(\varepsilon_m, x_{m}, t_{m}) \to (0, x_0, t_0)
\quad \text{as } m \to +\infty,
\]
such that $(x_{m}, t_{m})$ is a local maximum of $u^{P, \varepsilon_m}_M - \varphi$ and
\[u^P_M(x_0, t_0) = \lim_{m\to+\infty} u^{P, \varepsilon_m}_M(x_{m}, t_{m}).
\]
We define $\xi_m$ by
\[
\xi_m = u^{P,\varepsilon_m}_M(x_m,t_m) - \varphi(x_m,t_m).
\]
Since $(x_m,t_m)$ is a local maximum point of 
$u^{P,\varepsilon_m}_M - \varphi$, there exists a neighborhood 
$\mathcal{U}_m$ of $(x_m,t_m)$ such that
\[
(u^{P,\varepsilon_m}_M - \varphi)(x,t)
\le
(u^{P,\varepsilon_m}_M - \varphi)(x_m,t_m)
=
\xi_m,
\qquad \forall (x,t)\in \mathcal{U}_m.
\]
In particular,
\[
u^{P,\varepsilon_m}_M(x_m,t_m)
=
\varphi(x_m,t_m)+\xi_m,
\]
and, for all $(x,t)$ in a neighborhood of $(x_m,t_m)$,
\[
u^{P,\varepsilon_m}_M(x,t)
\le
\varphi(x,t)+\xi_m.
\]
Then using the monotonicity of the operator $S_{\varepsilon_m}$, we get
\begin{align*}
    S_{\varepsilon_m}\!\left(x_{m},
t_{m}, \varphi(\cdot, t_{m}) + \xi_m, \varphi + \xi_m
\right)
\le 
&S_{\varepsilon_m}\!\left(x_{m}, 
t_{m}, \varphi(\cdot, t_{m}) + \xi_m, u^{P, \varepsilon_m}_M
\right)
\\
&= S_{\varepsilon_m}\!\left( x_{m}, 
t_{m}, u^{P, \varepsilon_m}_M(\cdot, t_{m}), u^{P, \varepsilon_m}_M\right)
= 0,
\end{align*}
using \eqref{3eq:scheme_equiv_node}. Now, since $\varphi_m = \varphi(x_m,t_m)$ and $\varphi$ is continuous, we have
\[
\lim_{m\to+\infty} \varphi_m
=
\varphi(x_0,t_0)
=
u^P_M(x_0,t_0).
\]
Moreover, using Young's inequality and the convergence
$u^{P,\varepsilon}_M \to u^P_M$ uniformly on $I_P\times[0,T]$, we obtain
\begin{equation} \label{3eq:conv_L1_convergence}
\sup_{t \in [0,T]} 
\Big\| \sigma^P_M \star u^{P,\varepsilon}_M(\cdot, t) 
       - \sigma^P_M \star u^P_M(\cdot, t) \Big\|_{L^{\infty}(I_P)}
\le 
\|\sigma^P_M\|_{L^1(I_P)} \; 
\sup_{t \in [0,T]} \big\| u^{P,\varepsilon}_M(\cdot, t) - u^P_M(\cdot, t) \big\|_{L^{\infty}(I_P)}
\;\xrightarrow[\varepsilon \to 0]{}\; 0,
\end{equation}
By the consistency of the scheme $S_{\varepsilon}$, we deduce
\begin{align*}
&\partial_t\varphi(x_0,t_0)
-
\left[\left(\sigma^P_M \star \varphi(\cdot,t_0)\right)(x_0) + a(t_0)\right]
\,|\partial_x\varphi(x_0,t_0)| \\
&\qquad =
\lim_{\substack{(x_i, t_{n + 1})\to(x_0,t_0)\\ \varepsilon\to0\\ \xi\to0}}
S_{\varepsilon}\big(x_i,t_{n+1},
\varphi(\cdot,t_{n+1})+\xi,
\varphi+\xi\big)\\
&\qquad =
\lim_{m\to+\infty}
S_{\varepsilon_m}\!\left(
x_m,t_m,
\varphi(\cdot,t_m)+\xi_m,
\varphi+\xi_m
\right)\\
& \qquad \le \lim_{\substack{m \to + \infty} }S_{\varepsilon_m}\!\left( x_{m}, 
t_{m}, u^{P, \varepsilon_m}_M(\cdot, t_{m}), u^{P, \varepsilon_m}_M\right)
= 0.
\end{align*}
Similarly, if $u^P_M - \varphi$ attains a local minimum at $(x_0,t_0)$, we obtain
\[
\partial_t\varphi(x_0,t_0)
-
\left[\big(\sigma^P_M \star \varphi(\cdot,t_0)\big)(x_0) + a(t_0)\right]
\,|\partial_x\varphi(x_0,t_0)|
\ge 0.
\]
Therefore, $u^P_M$ is a viscosity solution of \eqref{31.equPaux}. This completes the proof of Theorem~\ref{3theorem-convergence}.
\end{proof}

\subsection{Proof of Theorem~\ref{3theorem-limit-PM}}\label{3existence of viscosity solution} 
In this section, we prove the convergence toward the initial model~\ref{3eq:eikonal}. The proof proceeds in two steps. We first pass to the limit as $M\to+\infty$ and then as $P\to+\infty$. By Fejér's theorem,
\[
\sigma^P_M \to \K^P
\quad \text{in } L^1(I_P).
\]
The estimates
\eqref{3first_L_infty_estimate}--\eqref{3bounded in LlogL}
are uniform in $M$. Hence the family $(u^P_M)_M$ is relatively compact in the space
$C([0,T];L^\infty(I_P))$, and, up to a subsequence,
\[
u^P_M \to u^P
\quad \text{locally uniformly on } I_P\times[0,T].
\]
By continuity of convolution from $L^1 \times L^\infty$ to $L^\infty$,
\[
\sigma^P_M \ast u^P_M
\longrightarrow
\K^P \ast u^P
\quad \text{locally uniformly}.
\]
Since each $u^P_M$ is a viscosity solution, the stability theorem for
continuous viscosity solutions
(see Barles~\cite[Theorem~2.3]{barles1993})
implies that the limit $u^P$ is a viscosity solution of \eqref{32.periodic}.\\
Then, for the second step, we pass to the limit as $P \to + \infty$.
We observe that
\[
\|u^P_0\|_{L^\infty(I_P)}
\le
2\|u_0\|_{L^\infty(\mathbb R)},
\qquad
\|\partial_x u^P_0\|_{L^1(I_P)}\le \| \partial_x u_0\|_{L^1(\R)}.
\]
Therefore, the estimates
\eqref{3first_L_infty_estimate}--\eqref{3bounded in LlogL}
are uniform in $P,M,\varepsilon$.
Consequently, $(u^P)_P$ is uniformly bounded in
\[
L^\infty((0,T);W^{1,L\log L}(I_P)),
\]
and $(\partial_t u^P)_P$ is uniformly bounded in
\[ L^{\infty}((0,T);L^1(I_P)).\]
By Simon's compactness lemma (see also Lemma~\ref{3Simon's lemma}),
$(u^P)_P$ is relatively compact in
$C([0,T];L^\infty(I_P))$.
Hence, up to a subsequence, we have
\[
u^P \to u
\quad \text{locally uniformly on } \mathbb{R}\times[0,T].
\]
Applying again the stability theorem for viscosity solutions,
we conclude that the limit $u$ is a viscosity solution of \eqref{3eq:eikonal}.

\section{Numerical experiment}\label{3numerical experiment}
This section is devoted to the numerical simulations of the solution $u^P_M$ to equation \eqref{31.equPaux}. As established in the theoretical part of this work, the problem admits a continuous viscosity solution. The objective of this section is to verify that the numerical simulations are consistent with this theoretical framework and to illustrate the qualitative behavior of the solution. The section is divided into two subsections. The first presents numerical simulations in the presence of an external stress, highlighting its influence on the dynamics of the solution. The second considers the case without external stress, allowing us to examine the intrinsic evolution of the system and to compare it with the stressed case. 

\subsection{Numerical simulations with external stress}\label{3external stress} In this subsection, we investigate the evolution of $u^P_M$ in the presence of an external stress. To do this, we consider the following periodic initial data
\begin{equation*}
  u_0(x)=  u^P_0(x)=  \cos\left(\dfrac{x}{20}\right) + 1, \qquad x \in [-P, P],
\end{equation*}
and we extend $u^P_0$ by $P$ periodicity on $\R$.  Moreover, we consider a physical kernel that naturally appears in the so-called Peierls-Nabarro model, which is known in the framework of isotropic elasticity with edge dislocations (see \cite{alvarez2006dislocation, 19} for more details). This kernel is given by 
$$
\K(x) =  \frac{\mu b^2}{2\pi (1-\nu)} \frac{x^2-\zeta^2}{(x^2+\zeta^2)^2}, 
$$
where $\nu=\frac{\lambda}{2(\lambda+\mu)}$  is the Poisson ratio and $\lambda, \mu>0$ are the Lamé coefficients for isotropic elasticity. Here, $\zeta \neq 0$  is a physical parameter depending only on the material and represents the size of the dislocation core. Moreover, the vector 
$\vec{b}=b(1,0)$ is the Burgers vector, which reflects the direction of dislocation motion. 
In our simulations, we took the special case $\frac{\mu b^2}{2\pi (1-\nu)} =1$ and $\zeta=1$ for simplicity. Obviously, these initial data and kernel verify the required assumptions {\bf (H1)-(H2)}. \\
Taking the numerical parameters indicated in Table~\ref{3table}, we observe the evolution of the solution in Figure~\ref{3sol:external stress}.
\begin{table}[!h]
 \begin{tabular}{|*{5}{c|}}
     \hline
  \quad $M$ \quad & \quad $N$ \quad & \quad $P$ \quad  &  \quad $\Delta t$ \quad \\ \hline 
    \quad $400$ \quad &  \quad $500$  \quad & \quad $50$ \quad & \quad $0.002$ \quad \\ \hline
   \end{tabular}
        \caption{Used parameters in Figures ~\ref{3sol:external stress}-~\ref{3stress: neg total variation}}\label{3table}
\end{table}
   
   \begin{figure}
       \centering
       \includegraphics[width=0.5\linewidth]{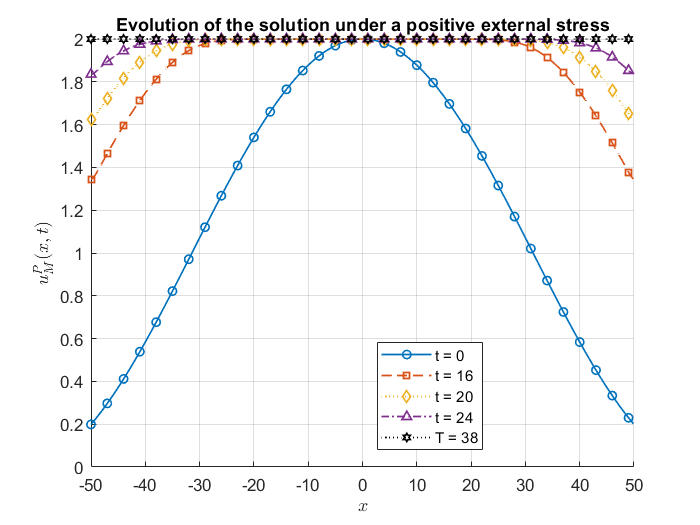}
       \caption{Evolution of $u^P_M$ under a positive external stress.}
       \label{3sol:external stress}
   \end{figure}
     \begin{figure}
       \centering
       \includegraphics[width=0.5\linewidth]{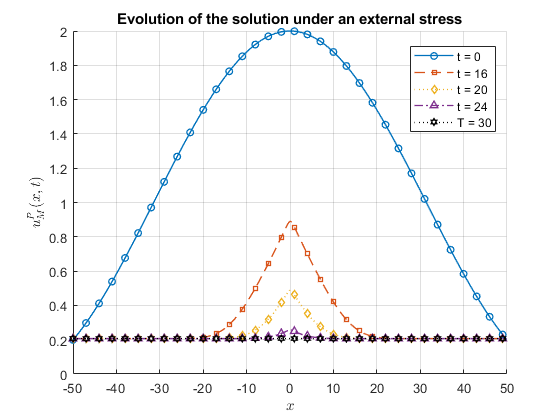}
       \caption{Evolution of $u^P_M$ under a negative external stress.}
       \label{3sol: neg external stress}
   \end{figure}
As shown in Figure~\ref{3sol:external stress}, which corresponds to the case of a positive external stress, $a(t) = 2$, the solution $u^P_M$ initially exhibits significant spatial variations. These variations correspond to regions where the dislocations are highly concentrated. As time evolves, the solution progressively smooths out and approaches an almost constant state at $T=38$, whose value is the maximum of the initial data. This behavior indicates that the system evolves toward a stable equilibrium configuration, which is simply the stationary solution of the Eikonal equation $\partial_t u = 2 |\partial_x u|$. 

In contrast, when the external stress is negative, namely $a(t)=-2$, as illustrated in Figure~\ref{3sol: neg external stress}, the solution exhibits a similar qualitative behavior. It gradually becomes spatially uniform, but the limiting constant is now the minimum of the initial data rather than its maximum, which reflects the stationary solution of the Eikonal equation $\partial_t u = -2 |\partial_x u|$.

Figure~\ref{3der: discrete spatial derivative} illustrates the evolution of the discrete spatial derivative $\partial_x u^P_M$ under a positive external stress. Initially, the profile contains both positive and negative regions, corresponding to dislocations of opposite sign. During the early stage of the evolution, these dislocation densities become more localized near the boundaries of the domain, leading to the formation of sharper peaks, given that periodic boundary conditions are assumed. Subsequently, the amplitudes of both the positive and negative regions gradually decrease as the positive and negative dislocations progressively compensate each other through the dislocation annihilation process. Throughout the evolution, the total dislocation is conserved and remains equal to zero. As a result, the discrete spatial derivative converges to zero at long times (here $T=38$), indicating that the system eventually relaxes toward a dislocation-free equilibrium state.
\begin{figure}
    \centering
    \includegraphics[width=0.4\linewidth]{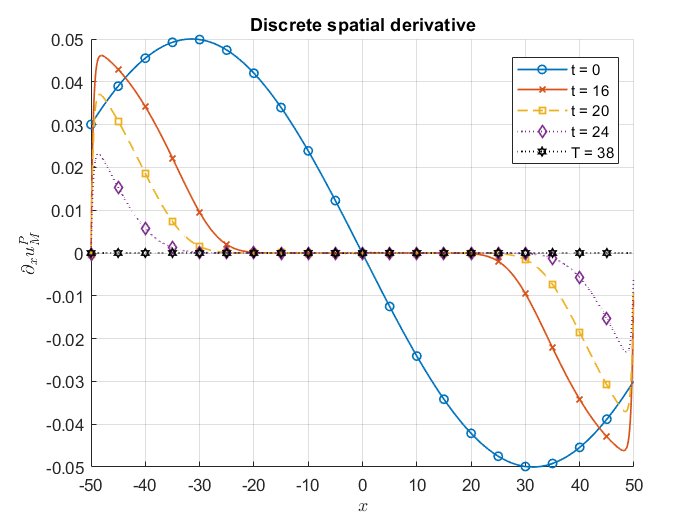}
    \caption{Evolution of $\partial_x u^P_M$ under a positive external stress.}
    \label{3der: discrete spatial derivative}
\end{figure}

Figure~\ref{3der: neg discrete spatial derivative} illustrates the evolution of the discrete spatial derivative $\partial_x u^P_M$ under a negative external stress. As in the positive-stress case, the initial profile contains positive and negative dislocation densities. The main difference appears at the beginning of the evolution: the positive and negative densities become more localized near the origin, producing sharper peaks before their amplitudes start to decrease. Subsequently, the positive and negative contributions progressively compensate each other through the dislocation annihilation process. We thus observe behavior analogous to that of the previous case: the system relaxes to a dislocation-free equilibrium state at long times.

 \begin{figure}
    \centering
    \includegraphics[width=0.4\linewidth]{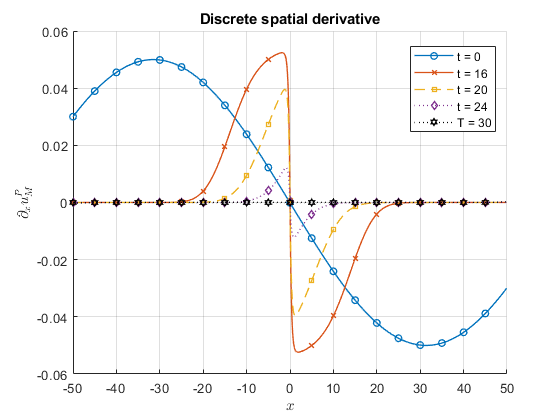}
    \caption{Evolution of $\partial_x u^P_M$ under a negative external stress.}
    \label{3der: neg discrete spatial derivative}
\end{figure}

This observation is corroborated by Figures~\ref{3stress: total variation} and~\ref{3stress: neg total variation}, which illustrate the evolution of the total variation of the numerical solution over time for the cases of positive and negative external stress, respectively. The total variation is initially high due to the steep gradients present in the initial data. It then decreases monotonically, eventually vanishing as expected.

\begin{figure}[ht!]
    \centering
    \includegraphics[width=0.4\linewidth]{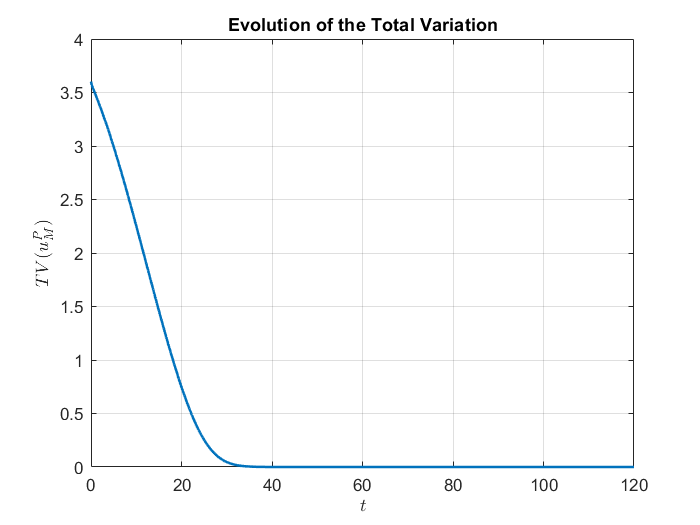}
    \caption{Total variation of $u^P_M$ under a positive external stress.}
    \label{3stress: total variation}
\end{figure}
\begin{figure}[ht!]
    \centering
    \includegraphics[width=0.4\linewidth]{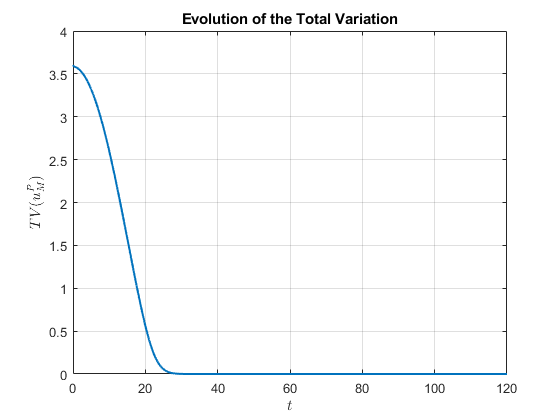}
    \caption{Total variation of $u^P_M$ under a negative external stress.}
    \label{3stress: neg total variation}
\end{figure}

It should be noted that similar behavior is also observed in the absence of external constraint, as shown in the following subsection. However, convergence toward the steady state is significantly slower. This result is expected, given that external constraint promotes evolution and thus accelerates convergence, whereas in its absence, the dynamics are governed solely by internal constraint, leading to a slower convergence rate.
\newpage 

\subsection{Numerical simulations without external stress}
To assess the role of the external stress, we consider the evolution of the dislocation lines under the sole effect of the internal stress. The numerical setup is identical to that of Subsection~\ref{3external stress}, except that the external stress is set to zero:
\[
a(t)=0.
\]

As illustrated in Figure~\ref{3stress: without, evolution of u}, the solution still converges to a stationary state, but on a much longer time scale: the equilibrium is reached only at $T=2500$, compared with $T=38$ when an external stress is applied. This shows that the internal stress alone is sufficient to drive the system toward equilibrium, while the external stress mainly accelerates the relaxation process.

\begin{figure}[ht!] \centering \includegraphics[width=0.5\linewidth]{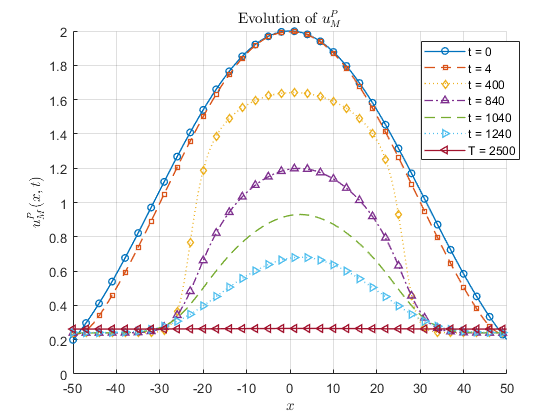} \caption{Evolution of $u^P_M$.} \label{3stress: without, evolution of u} \end{figure}

Figures~\ref{3stress: without, spatial derivative of u} and~\ref{3stress: without, total variation of u} provide further insight into this evolution. Figure~\ref{3stress: without, spatial derivative of u} shows that the approximate dislocation density $\partial_x u^P_M$ initially develops sharper positive and negative peaks before their amplitudes gradually decrease. Since the total dislocation is conserved and equal to zero, the dislocation density eventually vanishes, corresponding to a dislocation-free equilibrium state. Figure~\ref{3stress: without, total variation of u} confirms this behavior: the total variation decreases monotonically and stabilizes as the stationary regime is approached, although over a significantly longer time scale than in the presence of external stress.

\begin{figure}[ht!]
    \centering
    \includegraphics[width=0.4\linewidth]{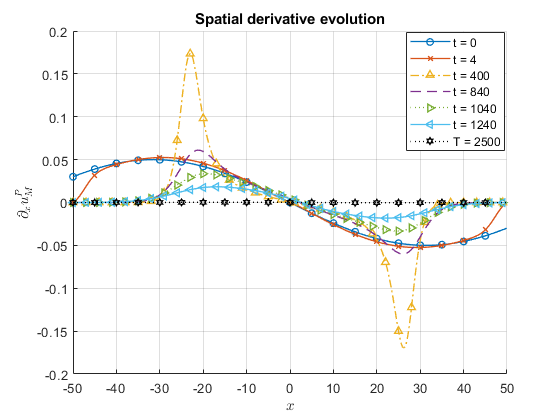}
    \caption{Spatial derivative of $u^P_M$.}
    \label{3stress: without, spatial derivative of u}
\end{figure}

\begin{figure}[ht!]
    \centering
    \includegraphics[width=0.4\linewidth]{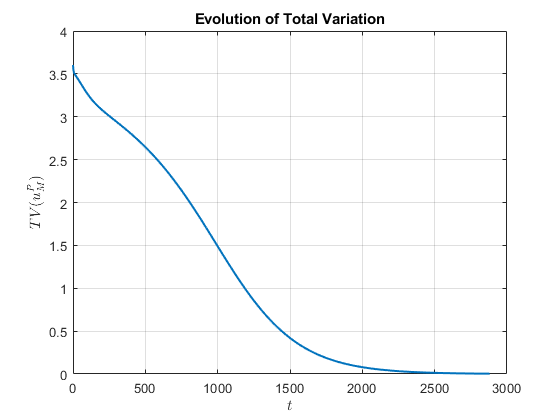}
    \caption{Total variation of $u^P_M$.}
    \label{3stress: without, total variation of u}
\end{figure}
Overall, the simulations show that both with and without external stress the system evolves toward the same equilibrium configuration. The main effect of the external stress is therefore not to modify the final state, but to substantially accelerate the convergence toward equilibrium. These observations are fully consistent with the physical interpretation of the model.

%%%%%%%%%%%%%%%%%%%%%%%%%%%%%%%%%%%%%%%%%%%%%%%%%%%%%%%%%%%%%%%%%%%%%%%%%%%%%%%
\begin{appendix}
\section{Technical results}\label{3app.techni}

\begin{lemma}[{\cite[Lemma 3]{ZHIZ25}}]\label{3lemma tech kernel}
Let assumption~\textbf{\emph{(H3)}} holds and assume that $\K \in L^1(\R)$. Then, $\K^P_M$ given by~\eqref{31.defKP} satisfies $\K^P_M \in L^1(I_P)$ with
\begin{align}\label{3normKPM}
    \|\mathcal{K}^P_M\|_{L^1(I_P)} &\leq 5 \|\mathcal{K}\|_{L^1(\mathbb{R})}.
\end{align}
Moreover, it holds
  \begin{align}\label{3Normcesaro}
     \sum_{j \in \I} \Delta x \, \left|\sigma_{M}^P(x_j)\right| \leq 5 \|\mathcal{K}\|_{L^1(\mathbb{R})}.
    \end{align}
\end{lemma}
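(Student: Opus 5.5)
The plan is to prove the two estimates in turn, starting from the definition \eqref{31.defKP} of $\K^P_M$ and the representation \eqref{3cesaro} of $\sigma^P_M$ as a normalized Fejér mean.

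For \eqref{3normKPM}, I use the triangle inequality:
\[
\|\K^P_M\|_{L^1(I_P)} \le \|\K^P\|_{L^1(I_P)} + \frac{2}{P}\,\|F_{2M}\|_{L^1(I_P)} \int_{|x|\ge P}|\K(x)|\,\dd x .
\]
Since $\K^P=\K$ on $[-P,P]$, the first term is at most $\|\K\|_{L^1(\R)}$. For the second term I use two standard facts about the Fejér kernel: $F_{2M}\ge 0$, and $\int_{I_P}F_{2M}\,\dd x=2P$, because only the $m=0$ mode of each Dirichlet kernel survives integration over a period. Hence the second term is at most $4\|\K\|_{L^1(\R)}$, and the total is $5\|\K\|_{L^1(\R)}$. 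This gives the continuous bound with room to spare.

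For the discrete bound \eqref{3Normcesaro}, I write $\sigma^P_M(x_j)=\frac{1}{2P}\int_{I_P}F_M(x_j-y)\,\K^P_M(y)\,\dd y$. Taking absolute values, using $F_M\ge0$ and Fubini, I get
\[
\sum_{j\in\I}\Delta x\,|\sigma^P_M(x_j)| \le \frac{1}{2P}\int_{I_P}|\K^P_M(y)| \Big(\sum_{j\in\I}\Delta x\,F_M(x_j-y)\Big)\dd y .
\]
The key step, and the main obstacle, is to show that the Riemann sum $\sum_j \Delta x\,F_M(x_j-y)$ is bounded by $2P$ uniformly in $y$. If it equals $2P$ exactly, I conclude with the first part.

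To establish this, I expand $F_M$ in its finite Fourier series. Since $\Delta x=P/N$, the sum $\sum_{j\in\I}e^{i\pi m x_j/P}=\sum_{j}e^{i\pi mj/N}$ equals $2N$ when $2N\mid m$ and vanishes otherwise. The Fejér weights are $(1-|m|/M)_+$, so the modes entering $F_M$ satisfy $|m|<M$. If $M\le 2N$, only $m=0$ survives and the sum is exactly $\Delta x\cdot 2N=2P$.

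If $M>2N$, aliased modes $m=2Nk$ also contribute. Their total is $2P\sum_k(1-2N|k|/M)_+$, which is no longer bounded by $2P$, so equality fails in this regime. I would first try to assume $M\le 2N$ (natural as $\Delta x\to0$ with $M$ fixed), or else apply Lemma~\ref{3fourier series of kpdelta negative} together with a direct estimate.

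If uniformity in $M$ is needed, a cruder fallback is available. Because $F_M$ is a nonnegative trigonometric polynomial whose bandwidth is controlled by $M$, a Riemann sum of it is comparable to its integral up to a factor depending only on the ratio of bandwidth to $N$. One could then absorb that factor, so the constant $5$ may degrade. The equality case $M\le2N$ is the route I would take.
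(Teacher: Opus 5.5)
Your proof of \eqref{3normKPM} is complete and is the standard argument ($F_{2M}\ge0$ and $\int_{I_P}F_{2M}\,\dd x=2P$). The paper gives no proof of this lemma; it only cites \cite{ZHIZ25}. Your argument for \eqref{3Normcesaro} is also correct, but only when $M\le 2N$. It rests on positivity of $F_M$, Tonelli, and the fact that the $2N$-point rule $\sum_j\Delta x\,p(x_j)$ is exact for trigonometric polynomials of degree $M-1\le 2N-1$.

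The gap is the case $M>2N$, which you leave open with two fallbacks. Neither can work, because the estimate is false there. The defect is in the statement, since \textbf{(H3)} does not relate $M$ and $N$, not in your method. Take $\K=\frac{1}{2\delta}\mathbf{1}_{[-\delta,\delta]}$ with $0<\delta<P$. Then $\|\K\|_{L^1(\R)}=1$ and $\int_{|x|\ge P}|\K|\,\dd x=0$, so $\K^P_M=\K^P\ge0$ and every $\sigma^P_M(x_j)\ge0$. Take also $M=2NL$ with $L\in\mathbb{N}$. Your own aliasing computation then gives
\[
\sum_{j\in\I}\Delta x\,\bigl|\sigma^P_M(x_j)\bigr|=\int_{I_P}\K^P(y)\,\Phi_L(y)\,\dd y\;\xrightarrow[\delta\to0]{}\;\Phi_L(0)=L=\frac{M}{2N},\qquad \Phi_L(y):=\sum_{|k|<L}\Bigl(1-\frac{|k|}{L}\Bigr)e^{-2i\pi Nky/P}.
\]
This exceeds $5$ as soon as $L\ge6$.

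The same growth occurs under \textbf{(H2)}, for instance with a wide Gaussian minus a very narrow one. So the sign information of Lemma~\ref{3fourier series of kpdelta negative} cannot rescue the bound.

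Your crude fallback, done exactly, gives the true order. For general $y$ the aliased Riemann sum is $2P$ times a nonnegative Fejér-type kernel of period $\Delta x$ and mean one, not the constant you wrote. Its maximum is $\sum_k(1-2N|k|/M)_+$. This yields the constant $5\sum_k(1-2N|k|/M)_+\sim 5M/(2N)$, and the example shows that growth is genuine.

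So instead of looking for a fallback, add the hypothesis $M\le2N$. This is harmless for the paper. The bound \eqref{3Normcesaro} enters the CFL condition \eqref{3deltat/deltax} and the fixed-point, monotonicity, entropy and $\partial_t$ estimates at fixed $M$ while $\Delta x=P/N\to0$. Theorem~\ref{3theorem-limit-PM} sends $M\to\infty$ only after $\eps\to0$, so one may always restrict to $N\ge M/2$.
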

\begin{lemma}[$L\log L$ estimate {\cite[Lemma 3.2]{HaMo10}} and {\cite[Lemma 4.1]{MoMo14}}]
\label{3Llog L estimate}
If $w \in L^1(I)$ is a nonnegative function, then $ \int_{I} f(w) \dd x <  \infty$ if and only if $w \in L\log L(I)$. Moreover, we have the following estimates
\begin{equation}\label{3f in llogl}
    \int_{I} f(w) \, \dd x \leq 1 + \|w\|_{L\,\log \,L(I)} + \|w\|_{L^1(I)} \, \ln\left(1 + \|w\|_{L\,\log \,L(I)}\right),
\end{equation}
and
\begin{equation}\label{3w in llogl}
    \|w\|_{L\,\log \,L (I)}\leq 1 + \|w\|_{L^1(I)}\,\ln(1 + e^2) +   \int_{I} f(w) \, \dd x.
\end{equation}
\end{lemma}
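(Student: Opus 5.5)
The plan is to prove the two inequalities and the equivalence by comparing $f$ with the Young function $\Phi(s)=s\ln(e+s)$ that defines the Luxemburg norm, using only elementary pointwise inequalities and the homogeneity/scaling built into the norm.

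\textbf{Equivalence.} For $s\ge0$, $f(s)\le s\ln(e+s)$. Conversely,
\[
s\ln(e+s)\le s\ln(e+e^2)+f(s)+C \quad\text{on } [0,e^2],
\]
while $s\ln(e+s)\le s\ln s+s\ln 2\le f(s)+s\ln2$ for $s\ge e^2$ (or an analogous comparison). Hence $\int\Phi(w)<\infty$ if and only if $\int f(w)<\infty$, given $w\in L^1$. Finiteness of $\int\Phi(w)$ is equivalent to finiteness of the Luxemburg norm, since $\Phi(s/\mu)\le\Phi(s)/\mu$ for $\mu\ge1$.

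\textbf{Estimate \eqref{3w in llogl}.} Set $A=1+\|w\|_{L^1}\ln(1+e^2)+\int f(w)$. Then $A\ge1$, and it suffices to check that $\int\Phi(w/A)\le1$. The first step uses $A\ge1$ and convexity with $\Phi(0)=0$:
\[
\int\Phi(w/A)\le\frac1A\int w\ln(e+w).
\]
The second step splits at the threshold $w\le e^2$ versus $w>e^2$ (or at $w\le 1/e$) and bounds pointwise
\[
w\ln(e+w)\le w\ln(1+e^2)+f(w)+1\cdot\mathbb 1_{\{\ldots\}}.
\]
The main obstacle is handling the additive constant, since $I$ may have large measure and so a term like $|I|$ is not allowed. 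To avoid it, I would instead use $w\ln(e+w)\le w\ln(1+e^2)+f(w)$ up to a factor absorbed by the $1/A$, for example exploiting $\ln(e+w)\le \ln(1+e^2)+\ln^+ (w)$-type bounds together with $w\ln w\le f(w)$ for $w\ge1/e$. This gives $\int\Phi(w/A)\le (A-1)/A\le1$.

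\textbf{Estimate \eqref{3f in llogl}.} Let $\mu=\|w\|_{L\log L}$. If $\mu\le1$, then $\int\Phi(w)\le\int\Phi(w/\mu)\cdot\mu\le1$, so $\int f(w)\le\int\Phi(w)\le1$. If $\mu>1$, write
\[
w\ln(e+w)\le w\ln\bigl(\mu(e+w/\mu)\bigr)=w\ln\mu+\mu\,\Phi(w/\mu)
\]
(using $e+w\le\mu(e+w/\mu)$ for $\mu\ge1$). Integrating, and since $\int\Phi(w/\mu)\le1$ by definition of the norm (Fatou), we get
\[
\int f(w)\le\int\Phi(w)\le\|w\|_{L^1}\ln\mu+\mu\le 1+\mu+\|w\|_{L^1}\ln(1+\mu).
\]
This step is routine; the delicate part of the whole proof is the measure-independent pointwise comparison in the previous step.
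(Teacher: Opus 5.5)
Your proof is correct and gives exactly the stated constants $1$, $\ln(1+e^2)$ and $\ln(1+\|w\|_{L\log L(I)})$. The paper has no proof of its own to compare against: it imports the lemma from \cite{HaMo10,MoMo14}. Your argument is self-contained. It compares $f$ pointwise with $\Phi(s)=s\ln(e+s)$, uses the scaling $\Phi(s/\mu)\le\Phi(s)/\mu$ for $\mu\ge1$ in \eqref{3w in llogl}, and uses $\ln(e+s)\le\ln\mu+\ln(e+s/\mu)$ for $\mu\ge1$ in \eqref{3f in llogl}.

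The step you flag as the main obstacle is in fact not one. The pointwise bound
\[
s\ln(e+s)\le s\ln(1+e^2)+f(s)\qquad\text{for all } s\ge 0
\]
holds exactly, with no additive constant and no extra factor to absorb into $1/A$:
\begin{itemize}
\item for $s\ge 1/e$, $\ln(e+s)=\ln s+\ln(1+e/s)\le \ln s+\ln(1+e^2)$ and $s\ln s\le f(s)$;
\item for $0\le s\le 1/e$, $\ln(e+s)\le\ln(e+1/e)<\ln(1+e^2)$ and $f(s)=0$.
\end{itemize}
Integrating gives $\int_I\Phi(w)\,\dd x\le A-1$. Hence $\int_I\Phi(w/A)\,\dd x\le (A-1)/A\le 1$, and $|I|$ never enters. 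You should state this inequality as a verified fact rather than hedge around it.

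The same inequality, together with $f\le\Phi$, gives the equivalence directly from the definition \eqref{3defLlogL} of $L\log L$, which is via finiteness of $\int\Phi(|w|)$. So your remark about finiteness of the Luxemburg norm is unnecessary. As written, that remark only proves one direction, although the converse is contained in your proof of \eqref{3f in llogl}.

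In that proof, the two points you need are already essentially covered. First, $\int_I\Phi(w/\mu)\,\dd x\le1$ at $\mu=\|w\|_{L\log L(I)}$ itself, by monotone convergence. Second, the degenerate case $\mu=0$ forces $w=0$ a.e., since $\Phi(s)\ge s$.
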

\begin{lemma}[Compact embedding {\cite[Theorem~8.32]{111}}]\label{3compact}
Let \( I \) be an open bounded interval of \( \mathbb{R} \), and let \( L\log L(I) \) denote the Zygmund space.
If we denote
\[
W^{1, L\log L}(I) = \{\, h \in L^1(I) \;\text{such that}\; \partial_x h \in L\log L(I) \,\},
\]
then the following embedding
\[
W^{1, L\log L}(I) \hookrightarrow C(I)
\]
is compact.
\end{lemma}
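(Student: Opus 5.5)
This is the compact embedding $W^{1,L\log L}(I)\hookrightarrow C(I)$ on a bounded interval, and I would prove it with Arzelà--Ascoli. Two ingredients are needed: a uniform bound in the sup norm, and a uniform modulus of continuity for functions in a bounded subset of $W^{1,L\log L}(I)$. Since $I$ is one-dimensional and $L\log L(I)\subset L^1(I)$, every $h\in W^{1,L\log L}(I)$ has an absolutely continuous representative. For it,
\[
h(x)-h(y)=\int_y^x \partial_x h(s)\,\dd s .
\]
So the whole question reduces to controlling $\int_J|\partial_x h|$ over short intervals $J$, uniformly in $h$.

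\textbf{Step 1 (uniform bound).} Write $h(x)=\frac{1}{|I|}\int_I h\,\dd y+\frac{1}{|I|}\int_I (h(x)-h(y))\,\dd y$. This gives
\[
\|h\|_{L^\infty(I)}\le |I|^{-1}\|h\|_{L^1(I)}+\|\partial_x h\|_{L^1(I)} .
\]
The last term is bounded by $C_I\|\partial_x h\|_{L\log L(I)}$, because $L\log L(I)\hookrightarrow L^1(I)$ continuously on a bounded interval. This follows from the Luxemburg norm definition, since $\ln(e+\cdot)\ge1$.

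\textbf{Step 2 (equicontinuity).} This is the key step. Here I would use the Orlicz Hölder inequality together with the fact that the complementary Young function of $t\ln(e+t)$ grows like $e^{s}$. That yields $\int_J |w|\le C\|w\|_{L\log L}\,\|\mathbf 1_J\|_{\exp L}$ with $\|\mathbf 1_J\|_{\exp L}\sim 1/\ln(1+1/|J|)$.

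A more elementary route avoids Orlicz duality. Let $w=|\partial_x h|$ and $\mu=\|w\|_{L\log L}$, so that $\int_I\frac{w}{\mu}\ln(e+\frac w\mu)\le1$. For any $K>0$, split $J$ into $\{w\le K\mu\}$ and $\{w>K\mu\}$. On the second set, $\ln(e+w/\mu)\ge\ln(e+K)$. Hence
\[
\int_J w\le K\mu|J|+\frac{\mu}{\ln(e+K)} .
\]
Choosing $K=|J|^{-1/2}$ gives the modulus
\[
\omega(\delta)=\mu\Big(\delta^{1/2}+\frac{1}{\ln(e+\delta^{-1/2})}\Big)\to0 ,
\]
which is uniform over a bounded set of $h$. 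Thus $|h(x)-h(y)|\le\omega(|x-y|)$, and we get equicontinuity. This uniform absolute-continuity (de la Vallée-Poussin) estimate is the only real obstacle; the rest is routine.

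\textbf{Step 3 (conclusion).} A bounded sequence in $W^{1,L\log L}(I)$ is uniformly bounded and equicontinuous on $\bar I$. By Arzelà--Ascoli it therefore has a subsequence converging uniformly, so the embedding into $C(\bar I)\subset C(I)$ is compact. Continuity of the embedding is Step 1.
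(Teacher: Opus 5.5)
Your proof is correct, but it follows a different route from the paper, which gives no argument of its own and simply cites a general compact-embedding theorem for Orlicz--Sobolev spaces.

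You give a self-contained one-dimensional proof via Arzel\`a--Ascoli. Its only nontrivial ingredient is the equi-integrability bound
\[
\int_J |\partial_x h| \le \mu\Big(|J|^{1/2}+\frac{1}{\ln(e+|J|^{-1/2})}\Big),\qquad \mu=\|\partial_x h\|_{L\log L(I)},
\]
obtained by splitting at the level $K\mu$. This step is sound: the Luxemburg infimum is attained by monotone convergence, and the case $\mu=0$ is trivial.

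The citation buys brevity and places the lemma in the general Orlicz--Sobolev framework. Your proof buys three things:
\begin{enumerate}
\item an explicit modulus of continuity, of order $\mu/\ln(1/\delta)$;
\item compactness directly into $C(\bar I)$ with the sup norm;
\item a clear explanation of why $L\log L$ rather than $L^1$ is needed. The embedding $W^{1,1}(I)\hookrightarrow C(\bar I)$ is continuous but not compact, and it is precisely the superlinearity of $t\ln(e+t)$ that makes $\partial_x h$ uniformly integrable on bounded sets. This is a de la Vall\'ee-Poussin argument, so any superlinear Young function would do.
\end{enumerate}

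Your approach also spares you from matching the hypotheses of $n$-dimensional theorems, which fit this case poorly. The standard sufficient condition for embedding the Orlicz--Sobolev space $W^{1}L_A$ into continuous functions is $\int^\infty A^{-1}(t)\,t^{-1-1/n}\,\dd t<\infty$. For $A(t)=t\ln(e+t)$ and $n=1$ this fails, since the integrand behaves like $1/(t\ln t)$.

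Finally, your argument carries over verbatim to the torus $I_P$, which is where the paper actually uses the lemma (Lemma~\ref{3uniform_regularity}).
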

\begin{lemma}[Stability of maxima under uniform convergence {\cite[Lemma 2.2]{5}}]\label{3barles}
Let $\Omega \subset \mathbb{R}^n$ be an open set and let 
$(v_\varepsilon)_\varepsilon \subset C(\Omega)$ be a sequence of functions such that
\[
v_\varepsilon \to v \quad \text{in } C(\Omega).
\]
Assume that $x \in \Omega$ is a strict local maximum point of $v$. 
Then there exists a sequence $(x_\varepsilon)_\varepsilon \subset \Omega$ such that
\begin{itemize}
\item for every $\varepsilon$, the point $x_\varepsilon$ is a local maximum point of $v_\varepsilon$;
\item $x_\varepsilon \to x$ as $\varepsilon \to 0$;
\item $v_\varepsilon(x_\varepsilon) \to v(x)$ as $\varepsilon \to 0$.
\end{itemize}
\end{lemma}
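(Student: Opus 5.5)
The goal is to prove Lemma~\ref{3barles}, the stability of strict local maxima under uniform convergence. The approach is the classical compactness argument on a small closed ball around the strict maximum point. Since $x$ is a strict local maximum of $v$, I first fix $r>0$ such that the closed ball $\overline{B}(x,r)\subset\Omega$ and
\[
v(y)<v(x)\quad\text{for all } y\in\overline{B}(x,r)\setminus\{x\}.
\]
Because $\overline{B}(x,r)$ is compact and $v_\varepsilon$ is continuous, each $v_\varepsilon$ attains its maximum on $\overline{B}(x,r)$ at some point $x_\varepsilon$ (any choice will do).

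The first step is to show $x_\varepsilon\to x$. Argue by contradiction: if not, there are $\delta\in(0,r)$ and a subsequence with $|x_{\varepsilon_k}-x|\ge\delta$. By compactness I extract a further subsequence with $x_{\varepsilon_k}\to\bar x\in\overline{B}(x,r)$ and $|\bar x-x|\ge\delta$. Uniform convergence on the compact ball (this is what "$v_\varepsilon\to v$ in $C(\Omega)$" gives, at least locally uniformly) together with continuity of $v$ yields $v_{\varepsilon_k}(x_{\varepsilon_k})\to v(\bar x)$. On the other hand, maximality gives $v_{\varepsilon_k}(x_{\varepsilon_k})\ge v_{\varepsilon_k}(x)\to v(x)$. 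Hence $v(\bar x)\ge v(x)$ with $\bar x\ne x$ in the ball, which contradicts strictness.

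Once $x_\varepsilon\to x$, the same two inequalities give the convergence of values. I write
\[
v_\varepsilon(x)\le v_\varepsilon(x_\varepsilon)\le v(x_\varepsilon)+\|v_\varepsilon-v\|_{L^\infty(\overline{B}(x,r))}.
\]
The left-hand side tends to $v(x)$. On the right, $v(x_\varepsilon)\to v(x)$ by continuity and the sup-norm term tends to $0$.

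Finally, I need $x_\varepsilon$ to be a genuine \emph{local} maximum of $v_\varepsilon$ in $\Omega$, not merely a maximum over the ball. Since $x_\varepsilon\to x$, for $\varepsilon$ small we have $|x_\varepsilon-x|<r/2$. Then $B(x_\varepsilon,r/2)\subset\overline{B}(x,r)$, so $x_\varepsilon$ maximizes $v_\varepsilon$ on an open neighborhood of itself. For the finitely many (or not-yet-small) $\varepsilon$ where this fails, the statement only concerns the limit behaviour, so one discards them or passes to the tail of the sequence.

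The only delicate point, and thus the main obstacle, is interpreting the convergence "in $C(\Omega)$" as uniform convergence on the compact ball $\overline{B}(x,r)$; that uniformity is exactly what lets $v_{\varepsilon_k}(x_{\varepsilon_k})\to v(\bar x)$ along moving points. Everything else is routine.
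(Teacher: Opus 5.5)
Your proof is correct and is exactly the standard compactness argument behind the cited result (Barles, Lemma 2.2), which the paper invokes without reproducing. You maximize $v_\varepsilon$ on a closed ball where $x$ is the strict maximum, use strictness together with local uniform convergence to force $x_\varepsilon \to x$ and $v_\varepsilon(x_\varepsilon)\to v(x)$, and observe that $x_\varepsilon$ lies in the interior of the ball once $\varepsilon$ is small. You are also right that ``for every $\varepsilon$'' can only be guaranteed on a tail of the sequence, and that reading is all the paper needs, since it immediately passes to a subsequence.
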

\end{appendix}
\bibliographystyle{plain}
\bibliography{bibliography}

\end{document}